\DeclareMathOperator\im{Im}
\renewcommand{\hom} {\operatorname{Hom}}
\DeclareMathOperator\coker{coker}
\DeclareMathOperator\sym{Sym}
\DeclareMathOperator\aut{Aut}
\DeclareMathOperator\gr{gr}
\DeclareMathOperator\chr{char}
\DeclareMathOperator\len{len}
\DeclareMathOperator\Span{span}
\DeclareMathOperator\hgt{ht}
\DeclareMathOperator\id{id}
\DeclareMathOperator\gal{Gal}
\DeclareMathOperator\fr{Fr}
\DeclareMathOperator\Sq{Sq}
\DeclareMathOperator\N{\mathcal{N}}
\newcommand{\iso} {\approx}
\newcommand{\normal} {\triangleleft}
\newcommand{\semidirect} {\rtimes}
\newcommand{\tr}[2] {\operatorname{tr}\!\big\uparrow_{#1}^{#2}}
\newcommand{\res}[2] {\operatorname{res}\!\big\downarrow_{#1}^{#2}}
\newcommand{\inv} {\ensuremath ^{-1}}
\newcommand{\bbR} {\ensuremath{\mathbb{R}}}
\newcommand{\bbC} {\ensuremath{\mathbb{C}}}
\newcommand{\bbQ} {\ensuremath{\mathbb{Q}}}
\newcommand{\bbZ} {\ensuremath{\mathbb{Z}}}
\newcommand{\bbN} {\ensuremath{\mathbb{N}}}
\newcommand{\F} {\ensuremath{\mathbb{F}}}
\newcommand{\Ga} {\ensuremath{\mathbb{G}_a}}
\newcommand{\Fbar} {\ensuremath{\overline{\mathbb{F}}}}
\newcommand{\powerset} {\ensuremath{\mathcal{P}}}
\newcommand{\set}[2] {\left\lbrace {#1} \,\middle\arrowvert\, {#2} \right\rbrace}
\newcommand{\mat}[2] {\begin{bmatrix} #2 \end{bmatrix}}
\newcommand{\+} {\quad\mbox{and}\quad}
\newcommand\numberthis{\addtocounter{equation}{1}\tag{\theequation}}
\theoremstyle{definition}
\newtheorem{thm}{Theorem}
\newtheorem{prop}[thm]{Proposition}
\newtheorem{lemma}[thm]{Lemma}
\newtheorem{cor}[thm]{Corollary}
\newtheorem{defn}[thm]{Definition}
\newtheorem{rem}[thm]{Remark}
\newtheorem{quest}[thm]{Question}
\newtheoremstyle{named}{}{}{}{}{\bfseries}{.}{.5em}{#3}
\theoremstyle{named}
\newtheorem*{namedthm}{}
\numberwithin{thm}{chapter}
\begin{document}

% ==========   Preliminary pages
\prelimpages
% ----- copyright and title pages

%\title{Some cohomology of finite general linear groups}
%\author{\textsc{Dissertation\\David Sprehn}} 
%\date{5/2015 DRAFT}
%\date{}
%\pagestyle{plain}

%\maketitle
%\tableofcontents

\Title{Some cohomology of finite general linear groups}
\Author{David Sprehn}
\Year{2015}
\Program{Mathematics}

\Chair{Steve Mitchell}{Professor}{Mathematics}
\Signature{Julia Pevtsova}
\Signature{John Palmieri}
%\Signature{Mathias Drton}

\titlepage
\newpage
\copyrightpage

% ----- abstract
\setcounter{page}{-1}
\abstract{
We prove that the degree $r(2p-3)$ cohomology of any (untwisted) finite group of Lie type over $\F_{p^r}$, with coefficients in characteristic $p$, is nonzero as long as its Coxeter number is at most $p$.  We do this by providing a simple explicit construction of a nonzero element.
Furthermore, for the groups $GL_n\F_{p^r}$, when $p=2$ or $r=1$, we calculate the cohomology in degree $r(2p-3)$, for all $n$.
}

% ----- contents
% add a page break in the table of contents to avoid widow line:
\addtocontents{toc}{\protect\enlargethispage{-20pt}}
\tableofcontents

% ----- acknowledgement and dedication

\acknowledgments{% \vskip2pc
  % {\narrower\noindent
  I would like to thank Steve Mitchell, my thesis advisor, for introducing me to the wonderful subject of group cohomology, for encouraging me to think about finite groups of Lie type, and for many absolutely wonderful discussions.
  I also thank Milnor and Stasheff for their book on characteristic classes which, together with Steve Mitchell's teaching, shaped my mathematical taste so far.
  % \par}
}

%
% ----- dedication
%
\dedication{\begin{center}to my parents, Susan Morris and Randy Simpson\end{center}}

% ----- done with prelim stuff
\textpages
% ========

\section{Introduction}
The general linear groups $GL_n(\F_q)$ are among the most important finite groups.
They are easy to define (the invertible matrices over the finite field of order $q=p^r$), but their ($p$-)subgroup structure is very rich and mysterious, and not amenable to direct study.  For instance, conjugacy classes of subgroups isomorphic to $E=C_p^k$ correspond to isomorphism classes of (faithful) modular representations of $E$, and classifying those (when $k>2$) has been proven algorithmically impossible~\cite[sec.~4.4]{Benson}.  Group cohomology provides a very powerful, coarser invariant which often succeeds in giving useful information when direct approaches fail.  It is an invaluable tool in many fields, including algebraic topology (where the homotopy types of classifying spaces are of interest), representation theory (via characteristic classes and support varieties), group theory, and number theory.
However, there is a big hole in our knowledge of the cohomology of general linear groups.

The cohomology at primes other than $p$,
\[ H^*(GL_n\F_{p^r};\F_\ell),\quad \ell\neq p, \]
was completely determined by Quillen~\cite{QuillenK} in the context of his work on the algebraic K-theory of finite fields: the answer is fairly simple, and not too far in spirit from the classical ``splitting principle'' calculation over $\bbR$ or $\bbC$.  In the simplest case, when $\ell\mid(p^r-1)$, the mod-$\ell$ cohomology is detected on the subgroup of diagonal matrices in $GL_n\F_{p^r}$; in all cases it is detected on a single abelian subgroup.\footnote{Namely, the subgroup of ($\ell$-torsion) diagonal matrices in $GL_m\F_{p^{rk}}\leq GL_{mk}\F_{p^r}\leq GL_n\F_{p^r}$, where $\F_{p^{rk}}$ is the field extension of $\F_{p^r}$ obtained by adjoining the $\ell$th roots of unity, and $m=\lfloor n/k\rfloor$.}

In contrast, Quillen did not compute the mod-$p$ cohomology
\[ H^*(GL_n\F_{p^r};\F_p), \]
calling it a ``difficult problem.''
This mod-$p$ cohomology has continued to resist computation and, four decades later, we still know very little about it.
There are very few instances of known nonzero cohomology classes.  Milgram and Priddy~\cite{MP} constructed a family of algebraically independent classes (when $r=1$), in very high degree.  Barbu~\cite{Barbu} constructed a class (again when $r=1$) in degree $2p-2$, nonzero for $2\leq n\leq p$.  Bendel, Nakano, and Pillen~\cite{BNP,BNP2} proved (nonconstructively) that there is a nonzero class in degree $r(2p-3)$ when $2\leq n<p-1$, and more generally studied the problem of where the first nonzero cohomology occurs on finite groups of Lie type, under the assumption that the Coxeter number is small compared to $p$.  Finite groups of Lie type can be thought of as finite-field analogs of Lie groups, and include e.g.~the general linear groups, special linear groups, symplectic groups, and orthogonal groups over the finite fields.  Coxeter number is a measure of size, in terms of the root system, and therefore a measure of the complexity (specifically, nilpotency order) of the $p$-Sylow subgroup.  The Coxeter number of $GL_n\F_p$ is $n$.

In this dissertation, we investigate a number of issues related to the mod-$p$ cohomology of $GL_n\F_{p^r}$ in degree $r(2p-3)$, the lowest degree in which it may possibly be nonzero (for any $n$), as well as the mod-$p$ cohomology in this degree of other finite groups of Lie type over $\F_{p^r}$.

\begin{comment}
In particular, we connect the results of~\cite{Barbu} and~\cite{BNP} by showing that, when $2\leq n\leq p$, $H^*(GL_n\F_{p^r};\F_p)$---and more generally the mod-$p$ cohomology of any finite group of Lie type over $\F_{p^r}$ with Coxeter number at most $p$---contains a copy of $H^*(GL_2\F_{p^r};\F_p)$, in particular we produce (explicitly) a nonzero element in degree $r(2p-3)$.
\end{comment}

\begin{comment}
Knowledge of these cohomology groups, and the construction of nonzero elements in them, is desirable because they can furnish characteristic classes for the study of modular representations.  Similarly, the classical Stiefel-Whitney and Chern classes of a vector bundle or (real/complex) representation can be described as pullbacks of certain group cohomology classes on the general linear groups over $\bbR$ and $\bbC$; they have been immensely useful by allowing manipulation of representations in terms of cohomology, and vice versa.  As yet there is no analogous tool for (modular) representations over finite fields.  Additionally, since many other finite groups come equipped with natural representations (including all of the ``classical type'' Chevalley groups), we may get a lot of mileage out of any nonvanishing cohomology classes on $GL_n\F_{p^r}$.
\end{comment}

\subsection{Vanishing ranges}
There are two types of inclusions among the groups $GL_n{\F_{p^r}}$, those increasing $n$ and those increasing $r$ (via a field extension).  A striking feature of the situation is that the cohomology rings vanish in both limits.  That is,
\begin{align*}
H^i\left(\lim_n GL_n\F_{p^r};\F_p\right) &=
H^i\left(GL\F_{p^r};\F_p\right)=0 \+ \\
H^i\left(\lim_r GL_n\F_{p^r};\F_p\right) &=
H^i\left(GL_n\overline{\F_p};\F_p\right)=0
\end{align*}
for all $i>0$.  Both of these results are due to Quillen~\cite{QuillenK}.  
This stable vanishing means that the approaches used to compute the cohomology rings of the analogous groups in characteristic other than $p$ (and, classically, linear groups over $\bbR$ and $\bbC$) can't be applied, as there is no base of stable classes from which to begin.

Nevertheless, the cohomology rings $H^i(GL_n\F_{p^r};\F_p)$ of the finite groups are very far from trivial, as they have positive Krull dimension: $r\lfloor n^2/4\rfloor$.  We know this because the Atiyah-Swan conjecture, proven by Quillen~\cite{QuillenS}, says that the Krull dimension of the mod-$p$ group cohomology ring of a finite group $G$ (or, more generally, compact Lie group) equals the maximal $k$ such that $G$ contains a subgroup isomorphic to $C_p^k$.  For $G=GL_{2m}\F_{p^r}$,
\[ \begin{bmatrix} I_m&* \\ 0&I_m \end{bmatrix} \]
is such a subgroup.  This maximal rank of an elementary abelian $p$-subgroup in $GL_n\F_{p^r}$ was computed by Milgram and Priddy~\cite{MP} when $r=1$, although their proof is equally valid for $r>1$.  It follows that the cohomology ring contains a set of $r\lfloor n^2/4\rfloor$ algebraically independent cohomology classes.  Milgram and Priddy (when $r=1$) constructed such a family; however, they are in exponentially high degree (at lowest, $2p^{n-2}(p-1)$ for $p$ odd).

The two ``vanishing in the limit'' statements above do strongly suggest that each cohomology group $H^i(GL_n\F_{p^r};\F_p)$ for fixed $i$ is zero for large enough $r$, or for large enough $n$.  This is true, and there have been a number of such refinements, which give a range in which the cohomology vanishes, growing in terms of either $n$ or $r$.
\pagebreak[2]
The current best vanishing ranges state that
$H^i(GL_n{\F_{p^r}};\F_p)=0$ if:
\nopagebreak
\begin{enumerate}[(V1)]
\singlespace
\item $0<i<r(2p-3)$, or \hfill Friedlander-Parshall~\cite{FP}
\item $0<i<n$ and $p^r\neq 2$, or \hfill Quillen [unpublished]
\item $0<i<n/2$. \hfill Maazen~\cite{Maazen}
\end{enumerate}
%Vanishing range (V1) is due to Friedlander and Parshall~\cite{FP}, while (V2) is an unpublished work of Quillen, and (V3) is a special case of the thesis of Maazen~\cite{Maazen}.
%I will sketch the proof of vanishing result (V1) in section~\ref{filtration}, as I intend to take the techniques further.  Results (V2,V3) use a powerful topological technique for proving stability ranges; unfortunately, it is not an effective tool for study beyond the stable range.

\begin{comment}
% leave this par out, just include last sentence?
Results (V2) and (V3) proceed in a broadly similar way: one looks for a space $X$ on which $G=Gl_n(\F_q)$ acts which is acyclic in a sufficient range of dimensions, and then uses the isotropy spectral sequence of the action to determine a stability range for $H^*(G)$ from inductive knowledge of the isotropy subgroups of $X$.  Both Quillen and Maazen use complexes $X$ which are constructed from the geometric structure of linearly independent vectors in $\F_q^n$.  This technique has been employed in many related situations to prove stability ranges.
% **citations.
However, it is not an effective tool for studying unstable classes, because as soon as we leave the stable range, the computation involves knowing the cohomology of $G$ with coefficients in the singular cohomology groups of $X$, which is no more approachable than what we are trying to compute.
\end{comment}

\noindent Regarding $p^r$ as constant and varying $n$, it is clear that only range (V1) is relevant when $n$ is small, and only (V2,V3) are relevant when $n$ is large.

In this dissertation, we will investigate several issues related to the vanishing range (V1).  We are not yet able to contribute any new information about the ``large $n$'' situation.

\subsection{Outline}
In chapter~\ref{gl2ch}, we provide a detailed discussion of the (known) cohomology groups of $GL_2\F_{p^r}$, pointing out some features which will be useful later, including a description of the nilpotent elements.

In chapter~\ref{lietypech}, we show that
\[ H^{r(2p-3)}(GL_n\F_{p^r}; \F_p) \neq 0\quad\mbox{for}\quad 2\leq n\leq p. \]
In fact, we provide a simple explicit construction, valid more generally in all (untwisted) finite groups $G$ of Lie type over $\F_{p^r}$ with Coxeter number at most $p$, of a nonzero class in that degree.  The construction is a transfer up from the Borel subgroup of a class pulled back from a quotient.  It shows moreover that $H^*(G;\F_p)$ contains a copy of $H^*(GL_2\F_{p^r};\F_p)$ as a graded vector space, and a module over the Steenrod algebra.

In chapter~\ref{dimensionch}, we focus on general linear groups $GL_n\F_{p^r}$, attempting to completely compute their cohomology in degree $r(2p-3)$, for all $n$.  We succeed in the case $p=2$ or $r=1$, showing that this cohomology is one-dimensional for $2\leq n\leq p$ and zero for $n>p$.  In other words, the results of chapter~\ref{lietypech} are sharp in the sense that they describe all of the nonzero cohomology in degree $r(2p-3)$, for all $n$.  To handle the remaining cases, one would need an understanding of the cohomology of the ``hook subgroup:'' the subgroup of matrices which differ from the identity matrix only in the first row and last column.
Along the way, we digress (section~\ref{char2lietype}) to give a generalization of the Friedlander-Parshall vanishing bound (V1 above) to other Lie types, in characteristic $p=2$.

The strategy of chapter~\ref{dimensionch} is to compare the cohomology of the Sylow $p$-subgroup $U_n$ to the cohomology of its associated graded group for the lower central series, which is elementary abelian.
%The terms of the filtration yield a sequence of (Lyndon-Hochschild-Serre) spectral sequences, beginning with the cohomology of the associated graded group $\gr U_n$, which is elementary abelian, and ending with the cohomology of $U_n$.
Both Quillen and Friedlander-Parshall employed this idea to establish vanishing ranges,
% on the Borel subgroup $B_n$, 
by showing that $H^*(\gr U_n;\F_p)$ already has no diagonal torus invariants in the desired range.  Computing these invariants is reduced to a question of combinatorics, because $H^*(\gr U_n;\F_p)$ has a monomial basis of eigenvectors for the diagonal torus.  Our strategy is to extend the machinery to carry out the comparison, not only in the range where these invariants are zero, but also in the lowest degree where they are nonzero.

\subsection{Boundaries of our Lie type definition}
There are many variations on the definition of a finite group of Lie type.  Our definition: the group of fixed-points of a standard Frobenius endomorphism on a connected reductive algebraic group (over $\Fbar_p$).  One benefit of merely requiring the algebraic group to be reductive, as opposed to semisimple or simple, is that the most important examples, the general linear groups, are included with no hassle.  There is no significant extra difficulty in allowing reductive groups.  However, this definition does still exclude the finite simple groups, such as $PSL_n\F_{p^r}$.  This is no big loss, as far as cohomology is concerned: e.g.~$SL_n\F_{p^r}\to PSL_n\F_{p^r}$ induces an isomorphism on $\F_p$-cohomology.  On the other hand, our definition is restrictive in that it excludes the twisted groups (defined with a Steinberg endomorphism rather than a standard Frobenius endomorphism), such as the unitary groups over $\F_{p^r}$.  It is not easy to extend the results of chapter~\ref{lietypech} to these groups, as (for example) the root subgroups of the finite unitary groups are not always abelian!

\chapter{Cohomology of $GL_2$}\label{gl2ch}
\section{Assorted facts}
For reference, we briefly recite some facts about group cohomology and the transfer homomorphism, which will be used frequently throughout this dissertation.

Let $G$ be a finite group.  For any commutative ring $R$, the group cohomology $H^*(G;R)$ is a connected graded $R$-algebra, depending functorially on both $R$ and $G$.  In particular, for $H\leq G$ a subgroup, there is a ``restriction homomorphism''
\[ \res{H}{G}:H^*(G;R)\to H^*(H;R), \]
which is a graded $R$-algebra homomorphism.
Because inner automorphisms of $G$ act trivially on cohomology, we have
\[ \im(\res{H}{G})\leq H^*(G;R)^{N_G(H)/H}, \]
i.e.~the image of restriction to $H$ lies in the invariants by the normalizer of $H$.
There is also a ``transfer map'' going the other way,
\[ \tr{H}{G}:H^*(H;R)\to H^*(G;R), \]
which is a graded $R$-module homomorphism, but not multiplicative.
They are related by two composition formulae:
\begin{enumerate}
\item $\tr{H}{G}\circ\res{H}{G}$ equals multiplication by $[G:H]$ on $H^*(G;R)$;
\item \textit{(Double coset formula)}
\[ \res{K}{G}\circ\tr{H}{G}=\sum_{g\in K\backslash G/H}
\tr{K\cap gHg\inv}{K}\circ\res{K\cap gHg\inv}{gHg\inv}\circ c_{g\inv}^*, \]
where $K\leq G$ is also a subgroup, the sum is over double-coset representatives, and $c_{g\inv}:gHg\inv\to H$ is conjugation.
\end{enumerate}

\noindent
Some consequences of the two formulae:
\begin{enumerate}
\item $H^i(G;\bbQ)=0$ for all $i>0$;
\item If $p$ does not divide $G$, then $H^i(G;\F_p)=0$ for all $i>0$;
\item If $H$ is (or contains) a $p$-Sylow subgroup of $G$, then
\[ \res{H}{G}:H^*(G;\F_p)\to H^*(H;\F_p) \]
is injective;
\item If $H$ contains a $p$-Sylow subgroup of $G$ and is normal in $G$, then
\[ \res{H}{G}:H^*(G;\F_p)\to H^*(H;\F_p)^{G/H} \]
is an isomorphism;\label{normalsylow}
\item If $G$ is an elementary abelian $p$-group and $H$ is a proper subgroup, then
\[ \tr{H}{G}:H^*(H;\F_p)\to H^*(G;\F_p) \]
is zero.
\end{enumerate}

When $H\leq G$ is a normal subgroup, there is a (Lyndon-Hochschild-Serre) spectral sequence
\[ H^p(G/H;H^q(H;R))\Rightarrow H^{p+q}(G;R). \]
If $R=\F_p$ and $p$ does not divide the order of $G/H$, it collapses to the vertical edge, giving another argument for consequence~\ref{normalsylow} above.  If instead $p$ does not divide the order of $H$,  it collapses to the horizontal edge, giving an isomophism
\[ H^*(G/H;\F_p)\xrightarrow{\sim} H^*(G;\F_p). \]
For example, the projections $GL_n\F_{p^r}\to PGL_n\F_{p^r}$ and $SL_n\F_{p^r}\to PSL_n\F_{p^r}$ induce isomorphisms on $\F_p$-cohomology.

The following two facts can be interpreted as special cases of the K\"unneth Theorem (or checked more directly).
\begin{enumerate}
\item\textit{(Universal Coefficient Theorem)} For $F\leq E$ a field extension, $H^*(G;F)\otimes E\to H^*(G;E)$ is an isomorphism of $E$-algebras.
\item Field extension commutes with invariants: for $F\leq E$ a field extension, and $T$ a group acting by automorphisms on $G$,
\[H^*(G;F)^T\otimes E\to H^*(G;E)^T \]
is an isomorphism.
\end{enumerate}

\section{Cohomology of $GL_2\F_{p^r}$}\label{gl2}
We will set the stage by calculating the mod $p$ cohomology of the finite groups $GL_2\F_{p^r}$, and show that $r(2p-3)$ is the first degree in which the cohomology is nonzero.  This will explain the significance of the number $r(2p-3)$ in terms of the invariant theory of finite fields.  The calculation sketched in this section is well known, and has appeared in~\cite{QuillenK,Aguade,FP}.

The unipotent upper triangular subgroup,
\[ U=\begin{bmatrix}1&*\\0&1\end{bmatrix}\iso\F_{p^r}, \]
is a Sylow $p$-subgroup of $GL_2\F_{p^r}$.  It is elementary abelian, which is what enables a comparatively easy calculation in this case.  Its normalizer is the subgroup of upper-triangular matrices,
\[ B_2\F_{p^r}=\begin{bmatrix}*&*\\0&*\end{bmatrix}\iso U\semidirect T, \]
a semidirect product of $U$ with the subgroup $T=(\F_{p^r}^\times)^2$ of diagonal matrices.
We claim that the restriction map
\[ H^*(GL_2\F_{p^r};\F_p)\to H^*(U;\F_p)^T\iso H^*(\F_{p^r};\F_p)^{\F_p^\times} \]
is an isomorphism.
(In the latter, we need only one factor of $\F_p^\times$ because the scalar matrices act trivially on $U$.)
To see this, consider the composition of transfer up from $H^*(U;\F_p)$ to $H^*(GL_2\F_{p^r};\F_p)$ followed by restriction back to $U$.  The double coset formula expresses this as a sum of transfers up from subgroups of $U$.  But, as $U$ is elementary abelian, the transfer up from any proper subgroup is trivial.  Hence the only nontrivial terms are those corresponding to elements of the normalizer of $U$.  So the composition is equal to $\sum_{t\in T} c_t^*$.  As $p$ does not divide the order of $T$, the image of this endomorphism is precisely the $T$-invariants.

All of the same remarks apply with $B_2\F_{p^r}$ in place of $GL_2\F_{p^r}$ as well.  In summary:
\begin{prop} For any prime $p$ and $r\geq1$, we have isomorphisms
\[ H^*(GL_2\F_{p^r};\F_p)\to H^*(B_2\F_{p^r};\F_p)\to H^*(U;\F_p)^T\iso H^*(\F_{p^r};\F_p)^{\F_{p^r}^\times} \]
given by the restriction maps.
\end{prop}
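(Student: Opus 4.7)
The plan is to assemble the isomorphism from three separate identifications, one for each arrow, using standard Sylow/transfer machinery together with the special feature that $U$ is elementary abelian.

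First I would handle the rightmost identification. The group $U$ is naturally isomorphic to the additive group of $\F_{p^r}$, and the diagonal torus $T = (\F_{p^r}^\times)^2$ acts by conjugation: a matrix $\mathrm{diag}(a,b)$ sends the strict upper triangular entry $x$ to $(a/b)x$. Scalar matrices act trivially, so this action factors through the quotient $T \twoheadrightarrow \F_{p^r}^\times$, $(a,b)\mapsto a/b$, and the resulting action is multiplication. This gives $H^*(U;\F_p)^T \cong H^*(\F_{p^r};\F_p)^{\F_{p^r}^\times}$ directly, with no further work.

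Second I would handle the middle arrow. Since $U$ is a normal Sylow $p$-subgroup of $B_2\F_{p^r}$ with quotient $T$ whose order $(p^r-1)^2$ is prime to $p$, consequence~\ref{normalsylow} immediately gives the isomorphism $H^*(B_2\F_{p^r};\F_p)\xrightarrow{\sim} H^*(U;\F_p)^T$. (Equivalently, the Lyndon-Hochschild-Serre spectral sequence collapses to the horizontal edge.)

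The left arrow requires more care, and this is the main content. Because $U$ is a Sylow $p$-subgroup of $GL_2\F_{p^r}$, the restriction $\res{U}{GL_2}$ is injective (consequence~3), and its image is contained in $H^*(U;\F_p)^{N_{GL_2}(U)/U} = H^*(U;\F_p)^T$ since $N_{GL_2\F_{p^r}}(U) = B_2\F_{p^r}$. For surjectivity onto the $T$-invariants, I would study the endomorphism $\res{U}{GL_2}\circ\tr{U}{GL_2}$ of $H^*(U;\F_p)$. Expanding it via the double coset formula yields
\[ \res{U}{GL_2}\circ\tr{U}{GL_2} = \sum_{g\in U\backslash GL_2/U} \tr{U\cap gUg^{-1}}{U}\circ\res{U\cap gUg^{-1}}{gUg^{-1}}\circ c_{g^{-1}}^*. \]
The crucial point, and the step I would emphasize, is that whenever $U\cap gUg^{-1}$ is a \emph{proper} subgroup of $U$, the associated transfer vanishes by consequence~5 (since $U$ is elementary abelian). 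Hence only the summands with $g\in N_{GL_2}(U) = B_2$ contribute, and these reduce to the sum $\sum_{t\in T} c_t^*$ acting on $H^*(U;\F_p)$. Since $|T|$ is prime to $p$, this endomorphism surjects onto $H^*(U;\F_p)^T$ (averaging gives a section). Combined with $\tr{U}{GL_2}\circ\res{U}{GL_2}$ being multiplication by $[GL_2:U]$, which is a unit mod $p$, the image of $\res{U}{GL_2}$ is exactly $H^*(U;\F_p)^T$, completing the proof.

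The only real obstacle is the double coset calculation above, and it is defused immediately by the elementary abelian nature of $U$; once that observation is made, the rest is bookkeeping with Sylow-theoretic consequences already recorded.
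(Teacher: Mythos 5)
Your proof is correct and follows essentially the same route as the paper: restriction is injective because $U$ is Sylow, lands in the $T$-invariants because $N_{GL_2}(U)=B_2$, and is surjective onto them because the double-coset expansion of $\res{U}{GL_2}\circ\tr{U}{GL_2}$ kills all proper-subgroup transfers ($U$ being elementary abelian), leaving $\sum_{t\in T}c_t^*$ whose image is the $T$-invariants since $p\nmid|T|$. The only cosmetic difference is that for the middle arrow you invoke the normal-Sylow consequence directly rather than re-running the transfer argument with $B_2$ in place of $GL_2$, as the paper indicates; both are standard and interchangeable.
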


Having expressed the cohomology of $GL_2\F_{p^r}$ as the invariants of the multiplicative group of $\F_{p^r}$ acting on the cohomology of the additive group, we proceed to describe those invariants.

Since $\F_{p^r}\iso C_p^r$ as abelian groups, the K\"unneth Theorem expresses its cohomology as a tensor product of $r$ copies of $H^*(C_p;\F_p)$, which has either one polynomial generator, or one polynomial and one exterior generator.
However, there is no canonical choice of the isomorphism $\F_{p^r}\iso C_p^r$ and I am not aware of any way of expressing the action of the multiplicative group $\F_{p^r}^\times$ on the generators thus obtained.  To get around this issue, we extend coefficients to $\F_{p^r}$ as well.  By the Universal Coefficient Theorem, this does not affect the dimensions of the cohomology groups.  Since $\F_{p^r}$ has all $(p^r-1)$th roots of unity (and they are all distinct), the action of $T$ must diagonalize after extending coefficients.  We proceed by finding a convenient eigenbasis.

First, recall that for a Galois field extension $E$ over $F$, and any $E$-vector space $V$, we have  a natural isomorphism
\[ E\otimes_F V\iso\bigoplus_{g\in\gal(E:F)}g^*(V). \]
Here the tensor product is made a vector space over $E$ via the left factor, and $g^*(V)$ is an $E$-vector space via the action $e\cdot v=g(e)v$.  The naturality is with respect to $E$-vector space homomorphisms $V\to V'$.
(The isomorphism sends $e\otimes v\mapsto(g(e)v)_g$, i.e.~the sum over all $g\in\gal(E:F)$ of $g(e)v$ in the summand $g^*(V)$.)

In our situation, we take $F=\F_p$ and $E=\F_{p^r}$.  Here the Galois group is cyclic of order $r$, generated by the Frobenius map $\fr$.  Hence
\[ \F_{p^r}\otimes_{\F_p}V\iso\bigoplus_{i=0}^{r-1}(\fr^i)^*(V) \]
naturally as vector spaces over $\F_{p^r}$.

\begin{comment}
Now regard $V$ as an additive group; this gives
\begin{align*}
H^1(V;\F_{p^r}) &= \hom_{grp}(V,\F_{p^r}) \\
&= \hom_{\F_p}(V,\F_{p^r}) \\
&= \hom_{\F_{p^r}}(\F_{p^r}\otimes_{\F_p}V,\F_{p^r}) \\
&= \hom_{\F_{p^r}}\left(\bigoplus_{i=0}^{r-1}(\fr^i)^*(V),\F_{p^r}\right) \\
&= \bigoplus_{i=0}^{r-1}(\fr^i)^*(V^*).
\end{align*}
naturally as vector spaces over $\F_{p^r}$.
\end{comment}

Recall that when $A$ is an elementary abelian $p$-group, we have a natural isomorphism
\[ H^*(A;\F_p)=\begin{cases}
\sym^*(A^*) &\mbox{if  }p=2,\\
\Lambda^*(A^*)\otimes \sym^*(\beta A^*) &\mbox{if  }p\mbox{ odd}.
\end{cases} \]
where $A^*=\hom(A,\F_p)=H^1(A;\F_p)$.
The universal coefficient theorem then gives a natural description
\[ H^*(A;\F_{p^r})\iso\begin{cases}
\sym^*(W) &\mbox{if  }p=2,\\
\Lambda^*(W)\otimes \sym^*(W) &\mbox{if  }p\mbox{ odd}.
\end{cases} \]
where $W=\F_{p^r}\otimes_{\F_p}A^*$.
%, and $(W:i)$ indicates a (natural) copy of $W$ in degree $i$.
When $p=2$ the symmetric algebra is generated in degree 1; when $p$ is odd, the exterior algebra is generated in degree 1 and the symmetric algebra in degree 2.
Here the exterior and symmetric powers, and the tensor product, are now over $\F_{p^r}$.

\begin{comment}
We used the $E$-linear natural isomorphisms
\begin{enumerate}
\item $E\otimes_F\sym_F(V)\iso\sym_E(E\otimes_F V)$,
\item $E\otimes_F\Lambda(V)\iso\Lambda(E\otimes_F V)$,
\item $E\otimes_F(V\otimes_F V')\iso(E\otimes_F V)\otimes_E(E\otimes_F V')$.
\end{enumerate}
\end{comment}

Now regard an $\F_{p^r}$-vector space $V$ as an additive group: an elementary abelian $p$-group.  The above two observations combine to give:
%We then have:
\begin{prop}\label{vectorspacecoh}
Let $V$ be a vector space\footnote{Tensor products, duals, and symmetric and exterior algebras are taken as vector spaces over $\F_{p^r}$.}
over $\F_{p^r}$.
\[ H^*(V;\F_{p^r})\iso\begin{cases}
\sym^*\left(\bigoplus_{i=0}^{r-1}(\fr^i)^*(V^*)\right) &\mbox{if $p=2$},\\
\Lambda^*\left(\bigoplus_{i=0}^{r-1}(\fr^i)^*(V^*)\right)\otimes \sym^*\left(\bigoplus_{i=0}^{r-1}(\fr^i)^*(V^*)\right) &\mbox{if $p$ odd}.
\end{cases} \]
When $p=2$ the symmetric generators are in degree 1; when $p$ is odd, the exterior generators are in degree 1 and the symmetric generators in degree 2.
The isomorphism is natural with respect to $F_{p^r}$-linear maps $V\to V'$.
\end{prop}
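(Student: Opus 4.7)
The plan is to deduce this from the three ingredients already assembled in the excerpt: the classical formula for $H^*(A;\F_p)$ when $A$ is elementary abelian, the Universal Coefficient Theorem, and the Galois decomposition $E\otimes_F V\iso\bigoplus_g g^*(V)$.

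First I would forget the $\F_{p^r}$-vector space structure on $V$ and regard it merely as an elementary abelian $p$-group. The classical formula then expresses $H^*(V;\F_p)$ as a symmetric algebra (or exterior tensor symmetric, when $p$ is odd) on the $\F_p$-linear dual, which I will temporarily denote $V^\vee:=\hom_{\F_p}(V,\F_p)$. Applying the Universal Coefficient Theorem to extend coefficients to $\F_{p^r}$, and using the fact that symmetric powers, exterior powers, and tensor products all commute with scalar extension, yields $H^*(V;\F_{p^r})$ in the claimed form, but with $\F_{p^r}\otimes_{\F_p}V^\vee$ appearing where the proposition has $\bigoplus_{i=0}^{r-1}(\fr^i)^*(V^*)$. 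What remains is to produce a natural $\F_{p^r}$-linear isomorphism between these two $\F_{p^r}$-vector spaces, at which point naturality of each preceding step gives naturality of the entire identification.

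For this last identification I would use the natural isomorphisms
\[ \F_{p^r}\otimes_{\F_p}V^\vee \iso \hom_{\F_p}(V,\F_{p^r}) \iso \hom_{\F_{p^r}}(\F_{p^r}\otimes_{\F_p}V,\F_{p^r}), \]
then apply the Galois decomposition to $\F_{p^r}\otimes_{\F_p}V$ inside, and distribute $\hom_{\F_{p^r}}(-,\F_{p^r})$ over the resulting direct sum. The key step demanding care, and the main (minor) obstacle, is the identification $\hom_{\F_{p^r}}((\fr^i)^*(V),\F_{p^r})\iso(\fr^i)^*(V^*)$, with $V^*$ now the $\F_{p^r}$-dual; I expect this to go via the explicit formula $\phi\mapsto\fr^{-i}\circ\phi$, with a short bookkeeping check confirming that it is $\F_{p^r}$-linear with respect to the twisted scalar action on each side. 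Naturality in $V$ at every stage is then immediate from the construction, and assembling the chain completes the proof.
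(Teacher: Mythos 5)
Your argument reproduces, step for step, the route the paper takes (and sketches in a commented-out block just above the proposition): apply the classical computation of $H^*$ of an elementary abelian $p$-group over $\F_p$, extend coefficients via the universal coefficient theorem using the compatibility of $\sym^*$, $\Lambda^*$, and $\otimes$ with scalar extension, and then identify $\F_{p^r}\otimes_{\F_p}\hom_{\F_p}(V,\F_p)\iso\hom_{\F_{p^r}}(\F_{p^r}\otimes_{\F_p}V,\F_{p^r})\iso\bigoplus_i\hom_{\F_{p^r}}\bigl((\fr^i)^*(V),\F_{p^r}\bigr)\iso\bigoplus_i(\fr^i)^*(V^*)$ via the Galois decomposition. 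The one slip is in the explicit untwisting map: under the paper's convention $e\cdot v=g(e)v$ for $g^*(V)$, an $\F_{p^r}$-linear functional $\phi$ on $(\fr^i)^*(V)$ satisfies $\phi(av)=\fr^{-i}(a)\phi(v)$, so it is $\fr^{i}\circ\phi$ (not $\fr^{-i}\circ\phi$) that lies in $V^*$ and defines the $\F_{p^r}$-linear isomorphism onto $(\fr^i)^*(V^*)$ --- exactly the kind of detail the bookkeeping check you defer would surface.
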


Being natural in $V$, this description is in particular equivariant with respect to the action of the multiplicative group $\F_{p^r}^\times$ on $V$ by scalar multiplication.

In coordinates, Proposition~\ref{vectorspacecoh} says that:
\[ H^*(\F_{p^r}^n;\F_{p^r}) = \begin{cases}
\F_{2^r}[x_{ik}\mid 1\leq i\leq n,\ 0\leq k<r] &\mbox{if $p=2$},\\
\F_{p^r}\langle x_{ik}\mid 1\leq i\leq n,\ 0\leq k<r\rangle \\
\qquad\otimes
\F_{p^r}[y_{ik}\mid 1\leq i\leq n,\ 0\leq k<r] &\mbox{if $p$ odd},
\end{cases} \]

where $\deg(x_{ik})=1$, $\deg(y_{ik})=2$, and $\lambda\in\F_{p^r}^\times$
acts on $x_{ik}$ or $y_{ik}$ as multiplication by $\lambda^{p^k}$.
This algebra therefore has a basis of monomials in the $x_{ik}$'s (and $y_{ik}$'s), which are all eigenvectors for the action of the multiplicative group.  Consequently the invariants of this action are spanned by the invariant monomials.  The latter cannot occur below degree $r(p-1)$, which we show using the following lemma of Quillen~\cite[Lem.~16]{QuillenK}:
\begin{lemma}\label{Quillenlemma}
Suppose $a_1,\dots,a_k\in\bbN$ are not all zero, and
\[ (p^r-1)\mid\sum_{k=0}^{r-1} p^ka_k. \]
Then \[ \sum_{k=0}^{r-1} a_k\geq r(p-1). \]
In case of equality, $a_0=\cdots=a_{r-1}=p-1$.
\end{lemma}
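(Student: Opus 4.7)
The plan is to introduce a ``carrying'' operation on tuples $(a_0,\ldots,a_{r-1})$ that preserves the class of $N := \sum_{k=0}^{r-1} p^k a_k$ modulo $p^r-1$ while decreasing the coordinate sum $\sum_k a_k$ by $p-1$ at each step, and then to identify the unique terminal state of this procedure.

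First I would observe that by hypothesis $N$ is a positive multiple of $p^r-1$. Working modulo $p^r-1$, we have $p^r \equiv 1$, which means that subtracting $p$ from some $a_j$ and adding $1$ to $a_{j+1}$---cyclically, so that when $j=r-1$ we add back to $a_0$---leaves $N \bmod (p^r-1)$ unchanged, while strictly decreasing $\sum_k a_k$ by exactly $p-1$. Note also that all coordinates remain nonnegative after such a carry (since we only subtract from an $a_j \geq p$).

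Next I would apply this operation repeatedly, choosing at each step any index $j$ with $a_j \geq p$, until no such index remains. The process terminates because the coordinate sum is a nonnegative integer that strictly decreases. At termination each $a_k \in \{0,1,\ldots,p-1\}$, so $N \leq \sum_k p^k (p-1) = p^r - 1$; yet $N$ is still a positive multiple of $p^r-1$, so $N = p^r-1$, and by uniqueness of base-$p$ expansions $a_k = p-1$ for all $k$. The terminal coordinate sum is therefore exactly $r(p-1)$.

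To conclude: if $m\geq 0$ carries were performed, the original coordinate sum was $r(p-1) + m(p-1) \geq r(p-1)$, with equality iff $m=0$, i.e.\ the original tuple was already $(p-1,\ldots,p-1)$. The main subtlety to treat carefully is the cyclic nature of the carry---the wraparound from $a_{r-1}$ back to $a_0$ is precisely what uses $p^r \equiv 1 \pmod{p^r-1}$, and it is what pins the argument down to a fixed representative of length $r$ rather than producing an unbounded base-$p$ expansion. Without this wraparound one could never achieve the lower bound $r(p-1)$, so verifying that this step is legitimate (and that coordinates remain nonnegative after it) is the crux of the argument.
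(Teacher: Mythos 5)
The paper does not prove this lemma at all: it is stated as a known result and cited directly to Quillen~\cite[Lem.~16]{QuillenK}, so there is no in-paper argument to compare against. Your carrying argument is correct and self-contained. The key observations all check out: a cyclic carry (subtract $p$ from $a_j$, add $1$ to $a_{j+1 \bmod r}$) leaves $N = \sum_k p^k a_k$ unchanged when $j<r-1$ and decreases it by exactly $p^r-1$ when $j=r-1$, so $N$ remains a positive multiple of $p^r-1$ throughout (positivity holds since the incremented entry becomes $\geq 1$); every carry lowers $\sum_k a_k$ by $p-1$ while keeping all entries nonnegative; at termination all $a_k \leq p-1$ forces $N \leq p^r-1$, hence $N = p^r-1$ and $a_k = p-1$ for all $k$ by uniqueness of base-$p$ digits. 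The bound $\sum_k a_k \geq r(p-1)$ and the equality case then follow as you say. One small stylistic point: the wraparound is not really a legitimacy concern (it is just the congruence $p^r \equiv 1$), but you are right that it is the mechanism that keeps the representation at length $r$ and lets $N$ shrink to its minimal residue $p^r-1$.
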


Now, let\footnote{Here, and in the proof of Proposition~\ref{gl2vanishing}, we write as though $p$ were odd.  If $p=2$, the reader may supply a similar, though simpler, proof.  Alternatively, we may take $y_{ik}=x_{i,k-1}^2$ in this proof.}
\[ z=\prod_{i=1}^n\prod_{k=0}^{r-1}x_{ik}^{a_{ik}}y_{ik}^{b_{ik}}\in H^*(\F_{p^r}^n;\F_{p^r}) \]
be a nontrivial monomial, where $a_{ik}\in\{0,1\}$ and $b_{ik}\in\bbN$.
If $z$ is invariant under the action of $\F_{p^r}^\times\iso C_{p^r-1}$, we see
(by choosing a generator) that
\[ (p^r-1)\mid\sum_{k=0}^{r-1}p^k\left(\sum_i a_{ik}+b_{ik}\right). \]
Hence, using Lemma~\ref{Quillenlemma},
\begin{align*}
\deg(z)&=\sum_{k=0}^{r-1}\left(\sum_i a_{ik}+2b_{ik}\right) \\
&\geq \sum_{k=0}^{r-1}\left(\sum_i a_{ik}+b_{ik}\right) \\
&\geq r(p-1).
\end{align*}
Therefore,
$H^i(\F_{p^r}^n;\F_{p^r})^{\F_{p^r}^\times}=0$ when $0<i<r(p-1)$.

Now, since the ``right edge subgroup''
\[ \F_{p^r}^n\semidirect\F_{p^r}^\times =
\begin{bmatrix} I_n&*\\0&\F_{p^r}^\times\end{bmatrix}\leq GL_{n+1}\F_{p^r} \]
has a normal Sylow $p$-subgroup $\F_{p^r}^n$ with complement $\F_{p^r}^\times$ acting by scalar multiplication, this shows:

\begin{lemma}\label{edgegroup}
$H^i(\F_{p^r}^n\semidirect\F_{p^r}^\times;\F_p)=0$
for $0<i<r(p-1)$ and all $n$.
\end{lemma}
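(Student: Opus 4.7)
The plan is to package together the monomial invariant computation just completed above with the standard normal--Sylow reduction. Write $G=\F_{p^r}^n\semidirect\F_{p^r}^\times$. The complement $\F_{p^r}^\times$ has order $p^r-1$, coprime to $p$, so $\F_{p^r}^n$ is a normal Sylow $p$-subgroup. By consequence~\ref{normalsylow} of the transfer composition formulas, restriction gives an isomorphism
\[ H^*(G;\F_p)\xrightarrow{\sim}H^*(\F_{p^r}^n;\F_p)^{\F_{p^r}^\times}, \]
reducing the claim to the vanishing of these invariants for $0<i<r(p-1)$.

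Next I would extend coefficients from $\F_p$ to $\F_{p^r}$ in order to diagonalize the $\F_{p^r}^\times$-action. Since the action is by $\F_p$-linear automorphisms, the Universal Coefficient Theorem together with the compatibility of field extension and invariants (the two consequences of the K\"unneth Theorem recited earlier) gives
\[ \dim_{\F_p}H^i(\F_{p^r}^n;\F_p)^{\F_{p^r}^\times} = \dim_{\F_{p^r}}H^i(\F_{p^r}^n;\F_{p^r})^{\F_{p^r}^\times}, \]
so it suffices to verify the vanishing with $\F_{p^r}$-coefficients.

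At this point the work is already done by the computation immediately preceding the lemma: Proposition~\ref{vectorspacecoh} provides a basis of monomials $\prod x_{ik}^{a_{ik}}y_{ik}^{b_{ik}}$ that are simultaneous eigenvectors for a generator of $\F_{p^r}^\times$, and Lemma~\ref{Quillenlemma} forces any nonzero invariant monomial to have degree at least $r(p-1)$. There is no genuine obstacle; the lemma is really a repackaging of the eigenvector computation applied to a larger ambient group. The only point worth noting is the coefficient extension step, which is legitimate precisely because $\F_{p^r}^\times$ acts $\F_p$-linearly, so taking invariants commutes with tensoring up to $\F_{p^r}$.
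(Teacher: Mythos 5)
Your proof is correct and matches the paper's own argument: the paper proves the vanishing of $H^i(\F_{p^r}^n;\F_{p^r})^{\F_{p^r}^\times}$ for $0<i<r(p-1)$ via the monomial eigenbasis from Proposition~\ref{vectorspacecoh} and Lemma~\ref{Quillenlemma}, and then observes that because $\F_{p^r}^n$ is a normal Sylow $p$-subgroup with complement $\F_{p^r}^\times$, the cohomology of the semidirect product equals the invariants. Your repackaging makes the normal-Sylow reduction and the coefficient-extension step explicit, but the substance and order of the argument are the same.
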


Now we specialize to the case $n=1$, which was what we needed to understand the cohomology of $GL_2\F_{p^r}$.
When $n=1$, we obtain a tighter bound on vanishing, because the supply of exterior generators is limited:
\begin{prop}\label{gl2vanishing}
For $0<i<r(2p-3)$,
\[ H^i(GL_2\F_{p^r};\F_p)=H^i(B_2\F_{p^r};\F_p)
=H^i(\F_{p^r};\F_p)^{\F_{p^r}^\times}=0. \]
Also
\[ \dim H^{r(2p-3)}(GL_2\F_{p^r};\F_p)=1. \]
\end{prop}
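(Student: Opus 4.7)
The plan is to combine the identification $H^*(GL_2\F_{p^r};\F_p) = H^*(\F_{p^r};\F_p)^{\F_{p^r}^\times}$ established in the preceding proposition with the explicit description from Proposition~\ref{vectorspacecoh}. Since extension of coefficients to $\F_{p^r}$ preserves both dimensions (UCT) and the invariant subspace, I would work inside $H^*(\F_{p^r};\F_{p^r})$, which for $p$ odd is $\Lambda^*(x_0,\dots,x_{r-1})\otimes\F_{p^r}[y_0,\dots,y_{r-1}]$, with $\deg x_k=1$, $\deg y_k=2$, and $\lambda\in\F_{p^r}^\times$ acting on $x_k$ and $y_k$ by multiplication by $\lambda^{p^k}$. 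Since the monomial basis diagonalizes this action, the invariants are spanned by the invariant monomials, and the problem reduces to counting those in each degree.

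For a nontrivial monomial $z=\prod_k x_k^{a_k}y_k^{b_k}$ with $a_k\in\{0,1\}$, $b_k\in\bbN$, invariance under $\F_{p^r}^\times\iso C_{p^r-1}$ is equivalent to
\[ (p^r-1)\,\Big|\,\sum_{k=0}^{r-1}p^k(a_k+b_k), \]
so Quillen's Lemma~\ref{Quillenlemma} gives $\sum_k(a_k+b_k)\geq r(p-1)$, with equality forcing $a_k+b_k=p-1$ for every $k$. The key step is to rewrite the degree as
\[ \deg(z)=\sum_k a_k+2\sum_k b_k=2\sum_k(a_k+b_k)-\sum_k a_k\geq 2r(p-1)-r=r(2p-3), \]
using only $\sum_k a_k\leq r$. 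This proves the vanishing statement. For equality, both $\sum_k a_k=r$ (so every $a_k=1$) and the equality case of Quillen's lemma must hold, giving $b_k=p-2$ for every $k$. Hence the unique invariant monomial of degree $r(2p-3)$ is $\prod_k x_k y_k^{p-2}$, which establishes the one-dimensional claim.

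The $p=2$ case is simpler: the cohomology is $\F_{2^r}[x_0,\dots,x_{r-1}]$ with $\deg x_k=1$ and weight $2^k$, so the invariance condition on $\prod x_k^{a_k}$ reads $(2^r-1)\mid\sum_k 2^k a_k$, and Quillen's lemma directly gives $\deg\geq r=r(2p-3)$, achieved uniquely by $\prod_k x_k$. There is no real obstacle; the entire argument is an application of Lemma~\ref{Quillenlemma} together with the slightly nonobvious identity $\deg(z)=2\sum(a_k+b_k)-\sum a_k$, which is what makes the bound simultaneously sharp for both the vanishing range and the dimension count at the boundary.
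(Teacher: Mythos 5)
Your proof is correct and essentially identical to the paper's: both reduce to counting $\F_{p^r}^\times$-invariant monomials in $H^*(\F_{p^r};\F_{p^r})$, apply Lemma~\ref{Quillenlemma} to the divisibility condition, and exploit the same rearrangement $\deg(z)=2\sum(a_k+b_k)-\sum a_k$ with $\sum a_k\leq r$. The only difference is that you spell out the $p=2$ case explicitly, whereas the paper leaves it to the reader in a footnote.
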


\begin{proof}
% okay to silently drop $i$'s here?
It suffices to consider $H^*(\F_{p^r};\F_{p^r})^{\F_{p^r}^\times}$.
% more detail?
As above, any (nontrivial) $\F_{p^r}^\times$-invariant monomial
\[ z=\prod_{k=0}^{r-1}x_k^{a_k}y_k^{b_k}\in H^*(\F_{p^r};\F_{p^r}) \]
satisfies
\[ (p^r-1)\mid\sum_{k=0}^{r-1}p^k\left(a_k+b_k\right). \]
But now \[ \sum_{k=0}^{r-1}a_k\leq r, \]
so
\begin{align*}
\deg(z)&=\sum_{k=0}^{r-1}(a_k+2b_k) \\
&\geq \sum_{k=0}^{r-1}(2a_k+2b_k)-r \\
&\geq 2r(p-1)-r=r(2p-3).
\end{align*}
Therefore,
\[ H^i(\F_{p^r};\F_{p^r})^{\F_{p^r}^\times}=0 \]
when $0<i<r(2p-3)$.

Furthermore, if both inequalities achieve equality, then we must have all $a_k=1$ and all $b_k=p-2$, so there is precisely one such monomial, namely
\[ x_0\cdots x_{r-1}y_0^{p-2}\cdots y_{r-1}^{p-2}.\qedhere \]
\end{proof}

\pagebreak[3]
In the same way, we get a corresponding result for $SL_2\F_{p^r}$:
\begin{prop}\label{sl2vanishing}
Let $p$ be odd.  For $0<i<r(p-2)$,
\[ H^i(SL_2\F_{p^r};\F_p)=0. \]
Also
\[ \dim H^{r(p-2)}(SL_2\F_{p^r};\F_p)=1. \]
\end{prop}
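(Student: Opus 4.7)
The plan is to repeat the proof of Proposition~\ref{gl2vanishing} almost verbatim, with the diagonal torus of $GL_2$ replaced by the diagonal torus $T'=\{\,\mathrm{diag}(\lambda,\lambda^{-1}) : \lambda\in\F_{p^r}^\times\,\}\iso\F_{p^r}^\times$ of $SL_2$. The Sylow $p$-subgroup is still $U\iso\F_{p^r}$, and its normalizer in $SL_2\F_{p^r}$ is the Borel $B'=U\semidirect T'$. Since $|T'|=p^r-1$ is coprime to $p$, the identical double-coset argument yields
\[ H^*(SL_2\F_{p^r};\F_p)\xrightarrow{\sim} H^*(U;\F_p)^{T'}. \]
The one substantive difference from the $GL_2$ case is that $T'$ acts on $U$ not by the full scaling action but by squaring: a direct conjugation computation gives $\lambda\cdot u=\lambda^2 u$.

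Next, extend coefficients to $\F_{p^r}$ and apply Proposition~\ref{vectorspacecoh} to obtain the same eigenmonomial basis $z=\prod_{k=0}^{r-1}x_k^{a_k}y_k^{b_k}$ as before, except that $\lambda\in T'$ now acts on $x_k$ and $y_k$ as $\lambda^{2p^k}$ (twice the $GL_2$ exponent, because of the squaring). Invariance of a nontrivial $z$ therefore amounts to the divisibility
\[ (p^r-1)\;\bigg|\;2\sum_{k=0}^{r-1}p^k(a_k+b_k). \]

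The key step is then to apply Lemma~\ref{Quillenlemma} directly to the doubled sequence $\bigl(2(a_0+b_0),\dots,2(a_{r-1}+b_{r-1})\bigr)$, which gives $2\sum_k(a_k+b_k)\geq r(p-1)$, with equality forcing $a_k+b_k=(p-1)/2$ for every $k$. Consequently
\[ \deg(z)=\sum_{k=0}^{r-1}(a_k+2b_k)=2\sum_k(a_k+b_k)-\sum_k a_k\geq r(p-1)-r=r(p-2), \]
and equality demands all $a_k=1$ and hence all $b_k=(p-3)/2$ (a nonnegative integer since $p$ is odd), pinning down the unique extremal monomial $x_0\cdots x_{r-1}\,y_0^{(p-3)/2}\cdots y_{r-1}^{(p-3)/2}$.

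There is really no serious obstacle here: once one identifies the squaring action of the $SL_2$ torus, the only change from the $GL_2$ argument is a harmless factor of $2$, which Quillen's lemma absorbs when applied to the rescaled sequence. The only tiny sanity check is that $(p-3)/2\geq 0$, which holds for every odd prime.
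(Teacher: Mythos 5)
Your proposal is correct and follows exactly the same approach the paper intends: the paper's proof is deliberately terse, saying only that the argument is ``exactly like'' the $GL_2$ case but with the action of $T'$ factoring through squares, yielding $\tfrac{1}{2}(p^r-1)\mid\sum_k p^k(a_k+b_k)$ and then stating the extremal monomial. You have correctly identified the squaring action and filled in the omitted step of how to extract the degree bound from the halved modulus --- applying Lemma~\ref{Quillenlemma} to the doubled sequence $(2(a_k+b_k))_k$ --- which is precisely what the paper's sketch requires.
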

The proof is exactly like that of Proposition~\ref{gl2vanishing}, except that now the action of the diagonal matrices on $U\iso\F_{p^r}$ factors through the subgroup $2\cdot\F_{p^r}^\times$ of perfect squares; hence we now get
\[ \frac{1}{2}(p^r-1)\mid\sum_{k=0}^{r-1}p^k\left(a_k+b_k\right). \]
In this case, the invariant monomial is
\[ x_0\cdots x_{r-1}y_0^\frac{p-3}{2}\cdots y_{r-1}^\frac{p-3}{2}. \]

\begin{rem}
In fact we could interpolate between the two results Lemma~\ref{edgegroup} and Proposition~\ref{gl2vanishing}, showing that
$H^i(\F_{p^r}^n\semidirect\F_{p^r}^\times;\F_p)=0$
for
\[ 0<i<r(2p-2-\min\{n,p-1\}). \]
Also notice that there is no difference between the two results in case $p=2$, since $r(2p-3)=r(p-1)=r$.  This is because there are no exterior generators.
\end{rem}

We pause here to note a fact that we will need later about the degree $r$ cohomology class for $p=2$.
\begin{lemma}\label{noperfectsquares}
The invariants $H^r(\F_{2^r};\F_2)^{\F_{2^r}^\times}$ intersect trivially with the subalgebra of perfect squares in $H^*(\F_{2^r};\F_2)$.
\end{lemma}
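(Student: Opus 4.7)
The plan is to extend coefficients to $\F_{2^r}$, where the invariants in degree $r$ have an explicit one-dimensional description, then observe that this unique invariant has all odd exponents so cannot be a square.

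First, I would invoke the Universal Coefficient Theorem together with the fact (recalled earlier in this chapter) that field extension commutes with taking invariants, yielding an isomorphism
\[ H^r(\F_{2^r};\F_2)^{\F_{2^r}^\times}\otimes_{\F_2}\F_{2^r}\iso H^r(\F_{2^r};\F_{2^r})^{\F_{2^r}^\times}. \]
Since squaring is a ring homomorphism on any commutative $\F_2$-algebra, any perfect square $z\in H^*(\F_{2^r};\F_2)$ maps to a perfect square $z\otimes 1\in H^*(\F_{2^r};\F_{2^r})$.  By the $p=2$ analogue of the monomial analysis in the proof of Proposition~\ref{gl2vanishing} (i.e.~the equality case of Lemma~\ref{Quillenlemma} with $p=2$), the target above is one-dimensional over $\F_{2^r}$, spanned by the invariant monomial $x_0 x_1\cdots x_{r-1}$.

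Next, I would identify the perfect squares inside $\F_{2^r}[x_0,\dots,x_{r-1}]\iso H^*(\F_{2^r};\F_{2^r})$.  Because $\F_{2^r}$ is a perfect field and Frobenius is additive in characteristic two, squaring acts on the polynomial ring as $\sum c_I x^I\mapsto\sum c_I^2 x^{2I}$, so its image consists of precisely those polynomials every monomial of which has all even exponents.

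Finally, if $z\in H^r(\F_{2^r};\F_2)^{\F_{2^r}^\times}$ is a perfect square, then $z\otimes 1=c\cdot x_0 x_1\cdots x_{r-1}$ for some $c\in\F_{2^r}$ is also a perfect square in $\F_{2^r}[x_0,\dots,x_{r-1}]$.  But every exponent appearing in $x_0 x_1\cdots x_{r-1}$ equals $1$, which is odd, forcing $c=0$; by injectivity of $z\mapsto z\otimes 1$ this gives $z=0$.  The argument has no real obstacle; the only delicate bookkeeping is that ``being a perfect square'' is preserved under extension of coefficients, which is automatic from the naturality of cup products.
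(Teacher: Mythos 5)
Your proof is correct and follows essentially the same route as the paper's: extend coefficients to $\F_{2^r}$, use the one-dimensional description of the invariants spanned by $x_0\cdots x_{r-1}$, observe that neither this monomial nor any nonzero scalar multiple is a perfect square, and conclude by injectivity of $z\mapsto z\otimes 1$. The only cosmetic difference is that you identify the squares as the polynomials with all even exponents, whereas the paper notes that surjectivity of squaring on $\F_{2^r}$ lets one absorb the scalar; both observations serve the same purpose.
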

\begin{proof}
As shown in Proposition~\ref{gl2vanishing}, $H^r(GL_2\F_{2^r};\F_{2^r})$ is one-dimensional, generated by the class $x_0\cdots x_{r-1}$, which is not a perfect square in the polynomial algebra $H^*(\F_{2^r};\F_{2^r})=H^*(\F_{2^r};\F_2)\otimes\F_{2^r}$.  Neither is any scalar multiple of it, since squaring on $\F_{2^r}$ is surjective.  Let $\alpha\in H^r(GL_2\F_{2^r};\F_2)$ be the nonzero element.  If $\alpha$ were a perfect square, then so would be $\alpha\otimes1$, contradicting the previous observation.
\end{proof}

The analysis of this section yields a canonical basis for
$H^*(GL_2\F_{p^r};\F_{p^r})$, given by those monomials in the $x_k$'s (and, if $p$ is odd, the $y_k$'s), 
whose exponents satisfy
\[ (p^r-1)\mid\sum_{k=0}^{r-1}p^k(a_k+b_k). \]
In particular, it gives a complete description of the additive and multiplicative structure of the $\F_{p^r}$-cohomology, describing it as a subring of $H^*(\F_{p^r};\F_{p^r})$.
By the Universal Coefficient Theorem, the Poincar\'e series of the $\F_{p^r}$-cohomology agrees with the Poincar\'e series of the $\F_p$-cohomology.  Hence we may use this basis to answer questions about the dimensions of the $\F_p$-cohomology groups, as we implicitly did above.  However, great caution is needed in transferring any other information, because the above basis for $H^*(GL_2\F_{p^r};\F_{p^r})$ does not correspond to a basis of $H^*(GL_2\F_{p^r};\F_p)$.  In fact, as far as I know, the multiplicative structure of the $\F_p$-cohomology ring has not been determined.  However, some information about the ring structure does readily pass back and forth, as we shall see in section~\ref{gl2nilpotency}.

\section{Nilpotence in $H^*(GL_2\F_{p^r};\F_p)$}\label{gl2nilpotency}
We wish to study the nilpotent elements of $H^*(GL_2\F_{p^r};\F_p)$.  Of course, when $p=2$, there are no (nonzero) nilpotent elements, as this cohomology ring embeds in the cohomology of an elementary abelian 2-group, which is polynomial.  Accordingly, we now assume $p$ is odd.

We point out that the set of nilpotent elements in a graded-commutative algebra do form a two-sided ideal, which we call the nilradical.  (A proof is given in section~\ref{nilideal}.)

Using the basis for $H^*(GL_2\F_{p^r};\F_{p^r})$ described in the previous section, it is easy to describe the nilradical of the $\F_{p^r}$-cohomology ring.  Indeed, the nilradical of
\[ H^*(\F_{p^r};\F_{p^r})=\F_{p^r}\langle x_0,\dots,x_{r-1}\rangle
\otimes\F_{p^r}[y_0,\dots,y_{r-1}] \]
is the ideal $(x_0,\dots,x_{r-1})$, because the quotient by this ideal is an integral domain.  Hence the nilradical of $H^*(GL_2\F_{p^r};\F_{p^r})$
is spanned (as an $\F_{p^r}$-vector space) by those monomials
\[ \prod_k x_k^{a_k}y_k^{b_k} \]
with some $a_k\neq0$, and satisfying
\[ (p^r-1)\mid\sum_{k=0}^{r-1}p^k(a_k+b_k). \]

To descend this information about the cohomology with $\F_{p^r}$ coefficients to information about the cohomology with $\F_p$ coefficients, we have:\footnote{C.f.~Curtis and Reiner~\cite[Cor.~69.10]{CR} in the case where $A$ is finite-dimensional.}

\begin{prop}\label{separablenil}
Let $E$ be a finite separable field extension of $F$, and $A$ a commutative or graded-commutative $F$-algebra.  Then the inclusion
\[ \N(A)\otimes_F E\to\N(A\otimes_F E) \]
is an isomorphism.  (Here $\N$ represents the ideal of nilpotent elements.)
\end{prop}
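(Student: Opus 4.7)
The plan is to handle injectivity by flatness, then attack surjectivity by extending scalars to a splitting field (which decomposes the target as a finite product via CRT) and inverting a Vandermonde matrix to recover nilpotence of individual coordinates. The inclusion is well-defined because sums of nilpotents are nilpotent in a graded-commutative ring (to be verified in section~\ref{nilideal}), and it is injective because $E$ is free over $F$, so tensoring with $E$ preserves the injection $\N(A) \hookrightarrow A$.

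For surjectivity, the primitive element theorem gives $E = F[\alpha]$ with separable minimal polynomial $f$ of degree $n = [E:F]$, and in a splitting field $L$ of $f$ we have $f = \prod_{i=1}^n (x - \alpha_i)$ with distinct $\alpha_i$. The Chinese Remainder Theorem produces an $L$-algebra isomorphism $L \otimes_F E \iso L[x]/(f) \iso \prod_{i=1}^n L$ whose $i$-th projection sends $\ell \otimes P(\alpha)$ to $\ell P(\alpha_i)$; tensoring with $A$ over $F$ yields $A \otimes_F E \otimes_F L \iso \prod_i (A \otimes_F L)$, sending $a \otimes P(\alpha) \otimes \ell$ to $(a \otimes \ell P(\alpha_i))_i$. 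Given nilpotent $z = \sum_{k=0}^{n-1} a_k \otimes \alpha^k \in A \otimes_F E$, the image of $z \otimes 1$ is $\bigl(\sum_k a_k \otimes \alpha_i^k\bigr)_i$, nilpotent in each coordinate. The Vandermonde matrix $(\alpha_i^k)_{i,k}$ is invertible over $L$ since the $\alpha_i$ are distinct, so each $a_k \otimes 1 \in A \otimes_F L$ is an $L$-linear combination of these coordinates; using that nilpotents form an ideal, $a_k \otimes 1$ is itself nilpotent. Flatness of $L$ over $F$ makes $A \hookrightarrow A \otimes_F L$ injective, hence $a_k \in \N(A)$ and $z \in \N(A) \otimes_F E$ as desired.

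The argument is conceptually straightforward; the main obstacle is organizational, namely keeping track of the three-fold tensor product $A \otimes_F E \otimes_F L$ and its CRT decomposition without sign or indexing errors. Separability enters exactly once (distinctness of the $\alpha_i$ makes the Vandermonde invertible), and graded-commutativity enters exactly once (closure of the nilradical under addition). Without graded-commutativity the linear-combination step collapses and no obvious substitute is available.
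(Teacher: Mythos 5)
Your proof is correct, but it follows a genuinely different route from the paper. The paper proves this via Galois descent (Lemma~\ref{descent}): for a Galois extension $E/F$, one applies descent to the subspace $W = \N(A\otimes_F E)$, which is stable under $\gal(E:F)$ because it is defined by a ring-theoretic condition, and concludes $W = (W\cap A)\otimes_F E = \N(A)\otimes_F E$; the merely-separable case is then handled by embedding $E$ in a Galois extension $K/F$, applying the Galois case, and intersecting with $A\otimes_F E$. Your argument instead extends scalars directly to a splitting field $L$ of the minimal polynomial of a primitive element, decomposes $A\otimes_F E\otimes_F L \iso \prod_i (A\otimes_F L)$ by the Chinese Remainder Theorem, and recovers nilpotence of the individual $F$-basis coefficients $a_k$ by inverting a Vandermonde matrix. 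Separability enters in both arguments at an analogous point (Galois closure exists, respectively the roots $\alpha_i$ are distinct), and both crucially use that $\N$ is an ideal in a graded-commutative algebra, proved in section~\ref{nilideal}. The tradeoff: the paper's route is shorter and more conceptual, but leans on a black-box descent lemma which it does not prove; yours is more computational and a bit longer to set up (the three-fold tensor product and its CRT decomposition), but is fully self-contained modulo the primitive element theorem and linear algebra, and it exhibits concretely why separability matters. One small point worth making explicit in your write-up: the map $A\otimes_F E \to \prod_i(A\otimes_F L)$ is a ring homomorphism, so it sends nilpotents to nilpotents, which is what makes each coordinate $w_i = \sum_k a_k\otimes\alpha_i^k$ nilpotent; you state this but it deserves a flag since the rest of the argument turns on it.
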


We remark that this fails for inseparable field extensions: consider $E\otimes_F E$ where $F=\F_p(t)$ and $E=F(\sqrt[p]{t})$.  Also, if $A$ is not (graded) commutative, it does not make sense, as $\N(A)$ can fail to be a subspace.

The proof uses the following fact belonging to the theory of Galois descent:
\begin{lemma}\label{descent}
Let $E$ be a Galois extension of $F$ with Galois group $G=\gal(E:F)$.  Let $V$ be a vector space over $F$, and 
\[ W\subset V\otimes_F E \]
an $E$-linear subspace.  If $W$ is preserved by $G$ (acting on $E$), then the natural map
\[ (W\cap V)\otimes_F E\to W \]
is an isomorphism.  I.e., $W$ is spanned by its elements belonging to $V$.
\end{lemma}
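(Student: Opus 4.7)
The plan is to establish injectivity and surjectivity separately, with injectivity being almost immediate and surjectivity being the substantive half. For injectivity, observe that $E$ is free as an $F$-module, so $-\otimes_F E$ is exact and preserves the inclusion $W\cap V\hookrightarrow V$; the resulting injection $(W\cap V)\otimes_F E\hookrightarrow V\otimes_F E$ has image inside $W$ because $W\cap V\subseteq W$ and $W$ is $E$-linear.

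For surjectivity, I would fix an $F$-basis $\theta_1,\dots,\theta_n$ of $E$, so every $w\in W$ admits a unique expression $w=\sum_{i=1}^n v_i\otimes\theta_i$ with $v_i\in V$.  The aim is to show each $v_i\in W\cap V$ under the identification $V=V\otimes 1\subseteq V\otimes_F E$, since then $w=\sum_i(v_i\otimes 1)(1\otimes\theta_i)$ lies in the $E$-span of $W\cap V$.  The key step uses $G$-stability: for every $g\in G$ the element $g(w)=\sum_i v_i\otimes g(\theta_i)$ lies in $W$.  I treat this collection of $|G|=n$ elements as a linear system in the ``unknowns'' $v_i\otimes 1$, with coefficient matrix $M=(g(\theta_i))_{g\in G,\,1\leq i\leq n}$, an $n\times n$ matrix over $E$.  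Once $M$ is known to be invertible, each $v_i\otimes 1$ becomes an $E$-linear combination of the elements $g(w)\in W$, hence lies in $W$ (as $W$ is $E$-linear), and thus in $W\cap V$.

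Invertibility of $M$ is the only nontrivial point and is the place where the Galois hypothesis on $E/F$ is used.  It is a classical fact from Galois theory: $\det(MM^T)$ is the discriminant of $(\theta_1,\dots,\theta_n)$, which is nonzero precisely because $E/F$ is separable.  Equivalently, the canonical map $E\otimes_F E\to\prod_{g\in G}E$ sending $a\otimes b\mapsto(a\,g(b))_g$ is an isomorphism, by Dedekind's linear independence of characters.  This is the one step I expect to need to cite rather than prove from scratch; everything else reduces to linear algebra over $E$ combined with unpacking the definition of the $G$-action on $V\otimes_F E$.
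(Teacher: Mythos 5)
The paper states Lemma~\ref{descent} as a known fact from Galois descent and does not supply a proof of its own, so there is no in-paper argument to compare against. Your proof is correct, and it is one of the standard arguments for this statement. The structure is sound: flatness of $E$ over $F$ gives injectivity of $(W\cap V)\otimes_F E\to V\otimes_F E$, and $E$-linearity of $W$ ensures the image lands in $W$; for surjectivity, expanding $w=\sum_i v_i\otimes\theta_i$ and applying each $g\in G$ yields the linear system $g(w)=\sum_i g(\theta_i)(v_i\otimes 1)$ with coefficient matrix $M=(g(\theta_i))$, and once $M$ is invertible each $v_i\otimes 1$ is an $E$-linear combination of the elements $g(w)\in W$, hence lies in $W\cap V$. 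The invertibility of $M$ is correctly identified as the crux: it is equivalent to the nondegeneracy of the trace form $\mathrm{Tr}_{E/F}(\theta_i\theta_j)=(MM^T)_{ij}$, or alternatively to Dedekind's independence of characters (if $\sum_g a_g g(\theta_i)=0$ for all $i$ with $a_g\in E$, then $\sum_g a_g g$ vanishes on all of $E$, forcing all $a_g=0$), and this is precisely where separability enters.

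Two small remarks, neither a gap. First, your argument tacitly treats $E/F$ as finite (you choose a finite $F$-basis $\theta_1,\dots,\theta_n$ with $n=|G|$); this is exactly the case the paper uses in the proof of Proposition~\ref{separablenil}, and the general infinite Galois case reduces to it by passing to finite Galois subextensions, but it would be worth flagging the finiteness assumption explicitly. Second, it is worth observing that $W\cap V=W^G$, since $(V\otimes_F E)^G=V\otimes_F E^G=V$ by flatness of $V$ over $F$; this is what ties the statement back to Galois descent in its usual formulation, and it is also why the elements $\sum_g g(\theta_j)\,g(w)$, which are visibly $G$-invariant and lie in $W$, give an alternative route to the same conclusion via the trace form directly.
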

\noindent
% (Reduce to the case $W\cap V=0$, then use surjectivity of the trace map $E\to F$.?)

%\begin{enumerate}[(a)]
%\item The inclusion $V\to(V\otimes_F E)^G$ is an isomorphism;
%\item If a subspace $W\leq V\otimes_F E$ is preserved by $G$, then
%the natural map $W^G\otimes_F E\to W$ is an isomorphism.
%\end{enumerate}

\begin{proof}[Proof of Proposition~\ref{separablenil}]
In case $E$ is a Galois extension of $F$, we apply Lemma~\ref{descent} to $W=\N(A\otimes_F E)$, a subspace which is preserved under all ring automorphisms.  This gives the result, since $\N(A)=W\cap A$.

Now suppose $E$ is merely finite and separable over $F$; then it is contained in a Galois extension $K$ of $F$.  Hence by the Galois case,
\[ \N(A)\otimes_F K=\N(A\otimes_F K). \]
Since \[ (\N(A)\otimes_F K)\cap(A\otimes_F E)=\N(A)\otimes_F E, \]
this gives the desired result.
\end{proof}

% remark on replacing $\N(-)$ with other things?  Jacobson radical, center, commutator, ideal generated in degree 1, subalg of $p$th powers (for perfect fields), etc...

In our case, Proposition~\ref{separablenil} gives
\begin{cor}\label{gl2nilp}
\[ \N(H^*(GL_2\F_{p^r};\F_p))\otimes\F_{p^r}=\N(H^*(GL_2\F_{p^r};\F_{p^r})). \]
In particular, the Poincar\'e series of the nilradical in $\F_{p^r}$-cohomology agrees with the Poincar\'e series of the nilradical in $\F_p$-cohomology.
\end{cor}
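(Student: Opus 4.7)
The plan is to observe that this corollary is essentially immediate from Proposition~\ref{separablenil} together with the Universal Coefficient Theorem, once we confirm the relevant hypotheses.

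First I would set $F = \F_p$, $E = \F_{p^r}$, and $A = H^*(GL_2\F_{p^r}; \F_p)$. The extension $\F_{p^r}/\F_p$ is finite, and since any finite extension of a perfect field is separable, it is also separable. The algebra $A$ is graded-commutative since it is the cohomology of a group. Thus the hypotheses of Proposition~\ref{separablenil} are satisfied, giving
\[ \N(A) \otimes_{\F_p} \F_{p^r} \xrightarrow{\sim} \N(A \otimes_{\F_p} \F_{p^r}). \]
Next I would invoke the Universal Coefficient Theorem (recalled just before Proposition~\ref{vectorspacecoh}) to identify $A \otimes_{\F_p} \F_{p^r}$ with $H^*(GL_2\F_{p^r}; \F_{p^r})$ as graded $\F_{p^r}$-algebras. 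Chasing this identification through the isomorphism above yields the first claim of the corollary.

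For the Poincaré series statement, I would note that tensoring with $\F_{p^r}$ is flat and preserves the grading, so in each degree $i$,
\[ \dim_{\F_{p^r}} \N(H^*(GL_2\F_{p^r}; \F_{p^r}))^i \;=\; \dim_{\F_{p^r}} \N(A)^i \otimes_{\F_p} \F_{p^r} \;=\; \dim_{\F_p} \N(A)^i. \]
Summing over $i$ gives equality of Poincaré series.

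There is no real obstacle here: the content is entirely in Proposition~\ref{separablenil}, and the corollary is merely the instantiation of it at a specific separable extension of finite fields together with the Universal Coefficient Theorem. The only mild subtlety worth flagging is verifying that the natural map $\N(A) \otimes_F E \to \N(A \otimes_F E)$ agrees, under the UCT identification, with the inclusion of nilpotent elements in $H^*(GL_2\F_{p^r}; \F_{p^r})$ coming from extending scalars; this follows from the naturality of both constructions.
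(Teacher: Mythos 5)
Your proof is correct and matches the paper's approach exactly: the paper treats this as an immediate application of Proposition~\ref{separablenil} via the Universal Coefficient Theorem, just as you do. Your verifications (finiteness and separability of the extension, graded-commutativity of group cohomology, compatibility of the two natural maps) are all routine but correctly flagged.
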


For example, 
\begin{prop}\label{gl2nonnilp}
The lowest degree homogeneous nonnilpotent element of $H^*(GL_2\F_{p^r};\F_p)$ occurs in degree $r(2p-2)$.
\end{prop}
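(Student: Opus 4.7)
The plan is to reduce the question to the analogous one over $\F_{p^r}$ coefficients, where the ring is explicit and the nilradical is already described. First, note that by the Universal Coefficient Theorem and Corollary~\ref{gl2nilp}, both the Poincar\'e series of the full cohomology and the Poincar\'e series of the nilradical are the same whether computed with $\F_p$- or $\F_{p^r}$-coefficients. Hence so is the Poincar\'e series of the quotient $H^*/\N$. In particular, the lowest degree in which $H^*(GL_2\F_{p^r};\F_p)/\N$ is nonzero equals the lowest degree in which $H^*(GL_2\F_{p^r};\F_{p^r})/\N$ is nonzero, and that is exactly the lowest degree of a nonnilpotent homogeneous element on either side.

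Next I would unwind the explicit description from Section~\ref{gl2}. The ring $H^*(GL_2\F_{p^r};\F_{p^r})$ has an $\F_{p^r}$-basis of monomials
\[ \prod_{k=0}^{r-1} x_k^{a_k} y_k^{b_k}, \qquad a_k\in\{0,1\},\ b_k\in\bbN, \]
satisfying $(p^r-1)\mid\sum_k p^k(a_k+b_k)$, and the nilradical is spanned by those basis monomials with some $a_k\ne 0$. Thus a basis for $H^*(GL_2\F_{p^r};\F_{p^r})/\N$ is given by the invariant pure-$y$ monomials $\prod_k y_k^{b_k}$, i.e.\ those with $(p^r-1)\mid\sum_k p^k b_k$.

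Now apply Quillen's Lemma~\ref{Quillenlemma} to the exponents $(b_0,\dots,b_{r-1})$: any nonzero such monomial has $\sum_k b_k\ge r(p-1)$, and therefore degree $2\sum_k b_k\ge 2r(p-1)=r(2p-2)$, with equality iff all $b_k=p-1$. Hence the unique (up to scalar) lowest-degree nonnilpotent class in $H^*(GL_2\F_{p^r};\F_{p^r})$ is $\prod_k y_k^{p-1}$ in degree $r(2p-2)$, and by the Poincar\'e-series comparison above, $H^*(GL_2\F_{p^r};\F_p)$ also has its lowest-degree nonnilpotent homogeneous element in degree $r(2p-2)$.

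The only real subtlety, and what I would flag as the main point requiring care, is the passage from $\F_{p^r}$- to $\F_p$-coefficients: we cannot simply write down the nonnilpotent class over $\F_p$ (the monomial $\prod y_k^{p-1}$ is not in the image of $H^*(\,\cdot\,;\F_p)\to H^*(\,\cdot\,;\F_{p^r})$ in any canonical way), so the argument must be made purely on the level of Poincar\'e series of $H^*/\N$, as above. Once one accepts that the separability result Proposition~\ref{separablenil} yields Corollary~\ref{gl2nilp}, the rest of the argument is bookkeeping with Quillen's lemma.
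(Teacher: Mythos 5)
Your proof is correct and takes essentially the same route as the paper's: reduce via Corollary~\ref{gl2nilp} (Poincar\'e series of the nilradical, hence of $H^*/\N$, are coefficient-independent) to the $\F_{p^r}$-coefficient ring, identify the nonnilpotent invariant monomials as the pure-$y$ ones, and apply Lemma~\ref{Quillenlemma}. You are somewhat more explicit than the paper about why the degree of the lowest nonnilpotent class descends from $\F_{p^r}$- to $\F_p$-coefficients (the paper compresses this into a single sentence citing Corollary~\ref{gl2nilp}), but the underlying argument is identical.
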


\begin{proof}
By Corollary~\ref{gl2nilp}, it suffices to find the lowest nonnilpotent element of $H^*(GL_2\F_{p^r};\F_{p^r})$.  Just as in the proof of Proposition~\ref{gl2vanishing}, we need to find the lowest degree (nontrivial)  $\F_{p^r}^\times$-invariant monomial
\[ z=\prod_{k=0}^{r-1}x_k^{a_k}y_k^{b_k}\in H^*(\F_{p^r};\F_{p^r}) \]
which is not nilpotent.  This means that all $a_k=0$.  Hence
\begin{align*}
\deg(z)&=\sum_{k=0}^{r-1}2b_k \\
&\geq 2r(p-1),
\end{align*}
using Lemma~\ref{Quillenlemma} as before.  Of course, the nonnilpotent monomial is
\[ z=y_0^{p-1}\cdots y_{r-1}^{p-1}. \qedhere \]
\end{proof}

% pth powers?  $P(A\otimes B)=P(A)\otimes P(B)$ ...

\section{Nilpotence in graded commutative algebras}\label{nilideal}
In this section, we verify that the nilradical of a graded commutative algebra over a field is indeed an ideal.\footnote{Thanks to Riley Casper and John Palmieri for suggestions.}  (In section~\ref{gl2nilpotency}, we used the fact that it is a subspace.)

Let $R$ be a ($\bbZ$-)graded commutative algebra over a field $F$, of odd characteristic.  (If the characteristic of $F$ were $2$, then $R$ would be commutative, rendering this section unnecessary.)
Define $\N(R)$ to be the set of nilpotent elements.  We show that $\N(R)$ forms an ideal.  In a general noncommutative ring, it could fail to be closed under addition: e.g.~consider $F\{x,y\}/(x^2,y^2)$.

For $x\in R$, write
\[ x=x_o+x_e, \]
where $x_o$ is a sum of odd-degree homogeneous elements and $x_e$ a sum of even-degree homogeneous elements.
\pagebreak[3]
\begin{lemma}
Let $x\in R$.
\begin{enumerate}[(a)]
\item $x_o^2=0$;
\item $x$ is nilpotent if and only if $x_e$ is.
\end{enumerate}
\end{lemma}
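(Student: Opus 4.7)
The plan is to exploit two consequences of graded commutativity in odd characteristic: any odd-degree homogeneous element squares to zero, and odd-degree elements commute (not merely graded-commute) with even-degree elements. Together these will let me expand powers of $x$ by the ordinary binomial theorem.

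For (a), I would write $x_o = \sum_i a_i$ as a finite sum of homogeneous elements of odd degrees $d_i$. Graded commutativity gives $a_i a_j = (-1)^{d_i d_j} a_j a_i = -a_j a_i$, since $d_i d_j$ is odd. Taking $i=j$ yields $2a_i^2 = 0$, hence $a_i^2 = 0$ because $\chr F$ is odd. For $i\neq j$ the cross terms pair up as $a_i a_j + a_j a_i = 0$, so expanding $x_o^2$ gives zero.

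For (b), the crucial observation is that if $a$ has odd degree and $b$ has even degree then $ab = (-1)^{|a||b|}ba = ba$, because $|a||b|$ is even. Consequently $x_o$ commutes with $x_e$. Combined with $x_o^2 = 0$ from (a), the ordinary binomial theorem applies, yielding
\[ x^n = (x_e + x_o)^n = x_e^n + n\, x_o\, x_e^{n-1} \]
for every $n\geq 1$. The forward direction of (b) is now immediate: if $x_e^m = 0$, then $x^{m+1} = 0$. For the converse I would use the direct sum decomposition $R = R_e \oplus R_o$ into even- and odd-degree parts (every element has a unique such splitting since distinct homogeneous components are linearly independent). The summand $x_e^n$ lies in $R_e$, while $n\, x_o\, x_e^{n-1}$ lies in $R_o$, so the vanishing of $x^n$ forces $x_e^n = 0$.

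There is no real obstacle here. The only subtlety worth flagging is keeping straight that ``$x_o$'' and ``$x_e$'' are themselves inhomogeneous sums, so one must check the sign assertions degree-by-degree; once that is done, everything reduces to the two slogans ``odd squares vanish'' and ``odd commutes with even,'' both of which are particular to odd characteristic.
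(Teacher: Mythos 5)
Your proof is correct, and it departs from the paper's argument for part (b) in an instructive way. Part (a) is handled identically in both: odd-degree homogeneous elements anticommute, and $2$ is invertible, so all squares and symmetrized cross-terms vanish.

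For part (b), the paper introduces the ``conjugate'' $x' = x_e - x_o$ and observes the identity $xx' = x'x = x_e^2$ (both cross-terms cancel and $x_o^2 = 0$); since $x$ and $x'$ commute, $x^n = 0$ forces $(x_e^2)^n = (xx')^n = x^n (x')^n = 0$, giving nilpotency of $x_e$. You instead expand $x^n$ directly by the binomial theorem --- valid because $x_o$ and $x_e$ commute and $x_o^2 = 0$ --- to get $x^n = x_e^n + n\,x_o x_e^{n-1}$, and then invoke the direct sum decomposition $R = R_e \oplus R_o$ to conclude that the even-degree component $x_e^n$ must vanish whenever $x^n$ does. Both routes rest on exactly the same two facts (odd squares vanish; odd commutes with even). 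The paper's conjugate trick is slicker in that it never expands a power of $x$ and never needs to appeal to linear disjointness of $R_e$ and $R_o$; your version is more computational but arguably more transparent, since it exhibits the precise form of $x^n$ and reads the conclusion off a grading argument. Either is a perfectly acceptable proof.
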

\begin{proof}
\begin{enumerate}[(a)]
\item Clear because odd degree elements anticommute, and $2\in F$ is a unit.
\item One direction is clear since $x_o$ and $x_e$ are commuting nilpotents.  Suppose now that $x$ is nilpotent.  Let $x'=x_e-x_o$.  We have
\[ xx'=x'x=x_e^2. \]
Since $x$ is nilpotent, it follows that $x_e^2$ is nilpotent, so $x_e$ is. \qedhere
\end{enumerate}
\end{proof}

\begin{lemma}\label{nilpotentcomponents}
An element of $R$ is nilpotent if and only if its homogeneous components are all nilpotent.
\end{lemma}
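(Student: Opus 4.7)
The plan is to handle the two directions separately, using the previous lemma to separate the odd-degree and even-degree parts.

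For the easier direction (homogeneous components nilpotent implies $x$ nilpotent), write $x = x_e + x_o$. The odd-degree summand satisfies $x_o^2 = 0$ by part (a) of the previous lemma, so $x_o$ is automatically nilpotent regardless of its homogeneous components. For $x_e$, the even-degree homogeneous components all commute with one another (since even degree is central in a graded-commutative ring), so a finite sum of nilpotents among them is nilpotent. Finally $x_e$ and $x_o$ commute (again, even degree is central), so $x = x_e + x_o$ is a sum of two commuting nilpotents, hence nilpotent.

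For the forward direction (nilpotent implies all components nilpotent), I would first apply part (b) of the previous lemma to reduce to the case where $x = x_e$, i.e.\ $x$ lies in the even-degree subring $R_{\mathrm{ev}} = \bigoplus_k R_{2k}$. Noting that every odd-degree homogeneous element squares to zero in a graded commutative ring of odd characteristic (since $yy = -yy$ forces $y^2 = 0$), the odd-degree components of the original $x$ are vacuously nilpotent, so nothing is lost. Now $R_{\mathrm{ev}}$ is an ordinary commutative $F$-algebra, and the problem becomes: if $x = x_{i_1} + \cdots + x_{i_k}$ with $x_{i_j}$ homogeneous of degree $i_j$ and $i_1 < i_2 < \cdots < i_k$ is nilpotent, then each $x_{i_j}$ is nilpotent.

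The key computational step is then: expanding $x^N = 0$ in $R_{\mathrm{ev}}$, the homogeneous component of the lowest possible degree $N i_1$ is precisely $x_{i_1}^N$, so $x_{i_1}^N = 0$. Since $x_{i_1}$ is now known to be nilpotent and commutes with $x$, the element $x - x_{i_1} = x_{i_2} + \cdots + x_{i_k}$ is also nilpotent, and induction on $k$ finishes the argument. The only subtlety is the reduction to the commutative even part, which is already handled by the preceding lemma; after that the proof is a standard grading argument with no real obstacle.
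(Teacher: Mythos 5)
Your proof is correct and takes essentially the same approach as the paper: reduce to the even-degree case via the preceding lemma, then peel off an extreme homogeneous component of a power of $x$ and induct. The only cosmetic difference is that you isolate the lowest-degree component $x_{i_1}^N$ of $x^N$, whereas the paper isolates the highest-degree component; both work identically.
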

\begin{proof}
If the components of $x\in R$ are all nilpotent, then the sum of even-degree components is nilpotent, because they commute; hence by the lemma $x$ is nilpotent.
On the other hand, suppose $x$ is nilpotent. We need to show that its even degree components are nilpotent (as the odd-degree components are nilpotent always).  Using the lemma, it suffices to assume that $x$ is a sum of only even-degree components.  Apply induction on the degree of the highest degree component $z$.  If $x^n=0$, then $z^n=0$ also, so $z$ is nilpotent.  But, since $x$ and $z$ commute, $x-z$ is nilpotent also, so the induction hypothesis applies to show that the remaining components of $x$ are nilpotent.
\end{proof}

\begin{prop}
The set $\N(R)\subset R$ of nilpotent elements forms a (two-sided) ideal.
\end{prop}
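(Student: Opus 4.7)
The plan is to show separately that $\N(R)$ is closed under addition and under left/right multiplication by arbitrary elements of $R$. Closure under scalar multiplication is immediate, so this suffices.

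For closure under addition, I would use Lemma \ref{nilpotentcomponents} to reduce to a statement about homogeneous elements. Specifically, if $x,y\in\N(R)$, then each of their homogeneous components is nilpotent, so each homogeneous component of $x+y$ is a sum of at most two homogeneous nilpotent elements \emph{of the same degree}. By Lemma \ref{nilpotentcomponents} applied to $x+y$, it is enough to show that two homogeneous nilpotent elements $a,b$ of equal degree have nilpotent sum. If the common degree is even, then $a$ and $b$ commute (since one already commutes with everything), so the binomial theorem gives $(a+b)^{m+n-1}=0$ whenever $a^m=b^n=0$. If the degree is odd, the first lemma in this section already gives $a^2=b^2=0$, and anticommutation yields $ab+ba=0$, hence $(a+b)^2=0$.

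For absorption under multiplication, suppose $x\in\N(R)$ and $r\in R$. Decomposing both $r$ and $x$ into homogeneous components and using bilinearity, plus the closure under addition just established, it suffices to treat the case when both $r$ and $x$ are homogeneous and $x$ is nilpotent. If $x$ has even degree, then $x$ is central, so $(rx)^n=r^n x^n=0$ for $n$ with $x^n=0$; similarly $(xr)^n=0$. If $x$ has odd degree, then $x^2=0$ and, writing $\deg r=d$, $\deg x=e$, graded-commutativity gives $xr=(-1)^{de}rx$, whence
\[ (rx)^2 = r(xr)x = (-1)^{de} r^2 x^2 = 0, \]
and analogously $(xr)^2=0$. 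Therefore $rx, xr\in\N(R)$ in all cases.

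The only real obstacle is keeping the graded-commutativity signs straight in the odd-degree case; both the addition and multiplication arguments hinge on the fact that homogeneous odd-degree elements anticommute and square to zero, which is already packaged for us by the first lemma of this section. Combining the two closure properties, $\N(R)$ is a two-sided ideal, as claimed.
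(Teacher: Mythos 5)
Your proof is correct and follows essentially the same route as the paper's: reduce to homogeneous elements via Lemma~\ref{nilpotentcomponents}, then use the fact that homogeneous elements commute or anticommute to handle the two degree parities. The paper's version is terser (it declares the homogeneous case ``clear''), while you spell out the binomial-theorem and $(a+b)^2=0$ computations; the substance is the same.
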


\begin{proof}
Let $x,y\in R$ be nilpotent.  By Lemma~\ref{nilpotentcomponents}, to see that $x+y$ is nilpotent, it suffices to check that its homogeneous components are.  Hence we are reduced to the case where $x$ and $y$ are homogeneous: clear because they commute or anticommute.

Now let $x,z\in R$ with $x$ nilpotent.  To show that $xz$ and $zx$ are nilpotent, it suffices to assume both $x$ and $z$ are homogeneous, as we now know that $\N(R)$ is closed under addition (and the components of $x$ are nilpotent).  But then they either commute or anticommute, showing their product is nilpotent.
\end{proof}
In particular, $\N(R)$ is an $F$-linear subspace of $R$.

\chapter{Nonvanishing cohomology classes on finite groups of Lie type with Coxeter number at most \lowercase{$p$}}\label{lietypech}
In this chapter, we prove that the degree $r(2p-3)$ cohomology of any (untwisted) finite group of Lie type over $\F_{p^r}$, with coefficients in characteristic $p$, is nonzero as long as its Coxeter number is at most $p$.  We do this by providing a simple explicit construction of a nonzero element.

Most of the material of this chapter has previously appeared~\cite{lietype} in the Journal of Pure and Applied Algebra.\footnote{Sprehn, D.: Nonvanishing cohomology classes on finite groups of Lie type with Coxeter number at most $p$,
{\it Journal of Pure and Applied Algebra} 219, (2015), 2396-2404.  doi:10.1016/j.jpaa.2014.09.006}

\section{Introduction}
We aim to construct a mod-$p$ cohomology class on a finite group $G$ of Lie type over the finite field $\F_{p^r}$, nonzero if the Coxeter number of $G$ is at most $p$.
More specifically, we will provide an embedding of the invariants
\[ H^*(\F_{p^r};\F_p)^{\F_{p^r}^\times} \iso H^*(GL_2\F_{p^r};\F_p) \]
into $H^*(G;\F_p)$.  Since these invariants are nonzero (see section~\ref{gl2}) in degree $r(2p-3)$, this shows $H^{r(2p-3)}(G;\F_p)\neq0$.

In particular, for $G=GL_n\F_{p^r}$, we have:
\begin{thm}\label{glnnonvanishing}
Suppose $2\leq n\leq p$.  Then
\[ H^{r(2p-3)}(GL_n\F_{p^r};\F_p)\neq0. \]
\end{thm}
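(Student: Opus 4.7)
The plan follows the strategy announced at the start of this chapter: produce a nonzero class in $H^{r(2p-3)}(GL_n\F_{p^r};\F_p)$ as the transfer from the Borel $B_n\F_{p^r}$ of a class pulled back from a $GL_2$-like quotient.

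First I would construct a group surjection $\pi \colon B_n\F_{p^r} \twoheadrightarrow E_{12} \semidirect T$, where $E_{12}\cong\F_{p^r}$ is the simple-root subgroup at position $(1,2)$ and $T$ is the diagonal torus.  The abelianization $U_n/[U_n,U_n]$ splits canonically as $\bigoplus_{i=1}^{n-1} E_{i,i+1}$, a direct sum of $T$-weight spaces; projecting onto the first summand gives a $T$-equivariant group surjection $U_n \twoheadrightarrow E_{12}$, which extends over $B_n = U_n \semidirect T$ to the desired $\pi$.  Since $T$ acts on $E_{12}$ through the character $t \mapsto t_1/t_2$, surjecting onto $\F_{p^r}^\times$ with kernel of order prime to $p$,
\[ H^*(E_{12}\semidirect T;\F_p) = H^*(\F_{p^r};\F_p)^{\F_{p^r}^\times} \iso H^*(GL_2\F_{p^r};\F_p), \]
which by Proposition~\ref{gl2vanishing} contains a nonzero class $\alpha$ in degree $r(2p-3)$.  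Pulling back, set $\gamma = \pi^*(\alpha) \in H^{r(2p-3)}(B_n\F_{p^r};\F_p)$ and take
\[ \beta = \tr{B_n\F_{p^r}}{GL_n\F_{p^r}}(\gamma) \in H^{r(2p-3)}(GL_n\F_{p^r};\F_p) \]
as the candidate class.

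The crux is showing $\beta \neq 0$.  The natural tool is the Bruhat form of the double coset formula, expressing $\res{B_n}{GL_n}\beta$ as a sum over $w\in S_n$ of composites $\tr{B_n \cap wB_nw^{-1}}{B_n}\circ\res{B_n \cap wB_nw^{-1}}{wB_nw^{-1}}\circ c_w^*(\gamma)$.  Because $\gamma = \pi^*(\alpha)$, it is ``supported'' on the single root subgroup $E_{12}$: restriction of $\gamma$ to any subgroup of $U_n$ contained in $\ker(\pi|_{U_n})$ is zero.  So for each $w$ the conjugate $c_w^*(\gamma)$ is similarly supported on $E_{w(1),w(2)}$, and the $w$-summand can be nonzero only when $w(1)<w(2)$ (so that this conjugate root subgroup actually lies in $B_n$) and when the subsequent restriction-transfer composite does not annihilate it.  I would enumerate these surviving Weyl contributions and show their sum is visibly nonzero in $\F_p$.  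The hypothesis $n \leq p$ is expected to enter here either by controlling cancellation modulo $p$ among the surviving terms, or, via the Lyndon--Hochschild--Serre spectral sequence of $\ker\pi \to B_n \to E_{12}\semidirect T$ together with the low nilpotency class $n-1 < p$ of $U_n$, by guaranteeing that $\gamma$ itself is nonzero to begin with.

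The main obstacle will be this bookkeeping: pinning down which Weyl elements actually contribute and verifying the sum does not cancel in $\F_p$.  If the full enumeration proves unwieldy, the alternative I would pursue is to restrict $\beta$ directly to a naturally embedded copy of $GL_2\F_{p^r} \hookrightarrow GL_n\F_{p^r}$ (for instance the top-left $2\times 2$ block) and apply a much shorter double coset formula there, arguing that the restriction recovers $\alpha$ up to a nonzero scalar.  Either route immediately gives $\beta \neq 0$ and hence the theorem; the generalization to other finite groups of Lie type with Coxeter number at most $p$ would then proceed by replacing $E_{12}$ by the subgroup of any chosen simple root and $S_n$ by the ambient Weyl group.
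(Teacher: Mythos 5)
Your setup is exactly the paper's: pull back from $E_{12}\rtimes T\cong B_n/U_s$ the nonzero class $\alpha$ of degree $r(2p-3)$, then transfer to $GL_n\F_{p^r}$.  The gap is in showing $\beta\neq0$, and neither of your two proposed routes closes it.

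The crucial missing idea is \emph{which} subgroup to restrict $\beta$ to.  You restrict back to $B_n$ (or to a $GL_2$ block).  The paper instead restricts to an elementary abelian $p$-subgroup $A\leq U_n$ of rank $r$ all of whose nontrivial elements are \emph{regular unipotent}.  Such an $A$ exists precisely because the Coxeter number $n\leq p$ forces $U_n$ to have exponent $p$ (Proposition~\ref{exponent}), which allows Testerman's theorem on $A_1$-overgroups to produce it (Corollary~\ref{ptorus}); that is exactly where the hypothesis $n\leq p$ enters, not in controlling cancellation among Bruhat terms.  With this choice, the double coset formula for $\res{A}{G}\circ\tr{B}{G}$ collapses: since $A$ is elementary abelian, all transfers from proper subgroups of $A$ vanish, so only the cosets fixed by $A$ on $G/B$ contribute; since $A$ consists of regular unipotents, $B$ is the unique fixed point, leaving the single term $\res{A}{B}$.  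Then one checks directly that $H^*(B/U_s)\to H^*(U/U_s)\to H^*(U)\to H^*(A)$ is injective, because $A\to U\to U/U_s$ is an isomorphism (no regular unipotent lies in $U_s$).  Your Bruhat-indexed route has no analogous collapsing mechanism: the isotropy groups $B\cap wBw^{-1}$ are not elementary abelian, so the transfer terms do not vanish and cannot be controlled term by term; your own proposal concedes the bookkeeping may be unwieldy, and no hypothesis forces it to terminate.

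Your fallback — restrict $\beta$ to an embedded $GL_2$ and recover $\alpha$ up to a scalar — is actually a dead end: the dissertation proves (Corollaries~\ref{rightedge} and~\ref{rootsubgroup}, and the corollary following Lemma~\ref{smallhook}) that restriction of degree $r(2p-3)$ cohomology of $GL_n\F_{p^r}$ to any proper block $GL_m\F_{p^r}$ or root subgroup vanishes when $n>2$.  So that restriction is zero and cannot detect $\beta$.  The detecting subgroup has to be the ``diagonally embedded'' regular-unipotent torus $A$, which lies in no proper parabolic.
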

\noindent
This extends a result of Bendel, Nakano, and Pillen~\cite{BNP} (which was valid for
\linebreak $n\leq p-2$).  
In chapter~\ref{dimensionch}, we will show that, at least in the cases $p=2$ or $r=1$, the group $H^{r(2p-3)}(GL_n\F_{p^r};\F_p)$ is one-dimensional, so that the nonzero element we construct in this chapter will in fact generate.  Also we will show in those cases that $H^{r(2p-3)}(GL_n\F_{p^r};\F_p)=0$ for $n>p$, so that the assumption $n\leq p$ in the above result cannot be improved further.

Friedlander and Parshall~\cite{FP} proved that the cohomology of $G=GL_n\F_{p^r}$ vanishes below degree $r(2p-3)$, i.e.~this is the lowest degree in which nonzero mod-$p$ cohomology can occur.  So our result in this case can be interpreted as saying that $r(2p-3)$ is a sharp bound for vanishing of $H^*(GL_n\F_{p^r};\F_p)$.  Bendel, Nakano, and Pillen~\cite{BNP,BNP2} have made progress on the problem of finding the sharp vanishing bounds for finite groups of Lie type.  In general, the bound $r(2p-3)$ is not sharp, but it is sharp in many cases, including the simply-laced, adjoint type groups of Coxeter number less than $p$, and the simply-connected groups of types $A_n$ ($n\geq 4$), $E_6$, $E_7$, and $E_8$, with sufficiently large $p$.
Furthermore, in simply-connected type $A_{n-1}$ with $5\leq n\leq (p-1)/2$ they showed the lowest cohomology group $H^{r(2p-3)}(SL_n\F_{p^r};\F_p)$ to be one-dimensional, so our nonvanishing class is a generator in these cases.  It follows that the same result holds for $GL_n\F_{p^r}$.  See also Theorems~\ref{lowestcoh2} and ~\ref{onedimensional}.  Also, in simply-connected type $C_n$ (i.e.~$Sp(2n,\F_q)$) with $n<p/2$, they showed that the lowest nonvanishing cohomology $H^{r(p-2)}(Sp(2n,\F_q);\F_p)$ is one-dimensional, so the nonvanishing class we shall construct there is also a generator.

In the case $r=1$, the invariants $H^*(\F_p;\F_p)^{\F_p^\times}$
contain nonvanishing classes in degrees $2p-3$ and $2p-2$, connected by the Bockstein homomorphism.  Hence, for $G$ with Coxeter number at most $p$, $H^*(G;\F_p)$ also contains such classes.  This answers a conjecture of Barbu~\cite{Barbu} who constructed a class in $H^{2p-2}(GL_n\F_p;\F_p)$ for $n\leq p$ and conjectured it to be a Bockstein.  We remark that this class in degree $2p-2$  may also be viewed as a Chern class of the permutation representation of $GL_n\F_p$ on the set $\F_p^n-\{0\}$; see section~\ref{chernclass}.  The conjecture may also be economically verified as a result of this observation, although this approach does not give any information in the case $r>1$.
% (or in other Lie types).

\section{Notation for finite groups of Lie type}\label{notationlietype}
Fix $q=p^r$.  We investigate the cohomology of finite groups of Lie type over $\F_q$: those groups which arise as the set of fixed-points of a $q$-Frobenius map $F$ acting on a connected reductive algebraic group.

Accordingly, let $\bar G$ be a connected reductive algebraic group over $\bar\F_q$.  Let $F:\bar G\to\bar G$ be a {\bf $q$-Frobenius map}.  That is,\footnote{Other authors refer to these as ``standard'' or ``untwisted'' Frobenius maps.} a map obtained from the endomorphism $F_q((a_{ij}))=(a_{ij}^q)$ of $GL_N(\bar\F_q)$ via some closed embedding $\bar G\leq GL_N(\bar\F_q)$ whose image is preserved by $F_q$.  Any such map is surjective (since $\bar G$ is connected).  The (finite) fixed-point group $\bar G^F$ of a $q$-Frobenius map is called a {\bf finite group of Lie type} over $\F_q$.

For discussion of the following notions, see Carter~\cite{Carter}.
Fix a choice of maximal torus and Borel subgroup $\bar T\leq\bar B\leq\bar G$ which are preserved by $F$.  Let $\bar U$ be the unipotent radical of $\bar B$.
Write $G=\bar G^F$, and similarly $B=\bar B^F$, $T=\bar T^F$, $U=\bar U^F$.
We remark that $B=U\semidirect T$ and $N=N_{\bar G}(\bar T)^F$ form a split BN-pair of characteristic $p$ in $G$, and that $U\leq G$ is $p$-Sylow subgroup.

Let $W=N/T$ be the Weyl group of $G$, $S$ be its set of Coxeter generators, and $\Phi$ be the associated root system with base (determined by $B$) 
\[ \Delta=\set{\alpha_s}{s\in S} \]
and positive roots $\Phi^+$.  For $\alpha\in\Phi$, denote by $X_\alpha$ the corresponding root subgroup of $G$.  Thus a root subgroup $X_\alpha$ is contained in $U$ if and only if $\alpha$ is positive, and $U$ is generated by the positive root subgroups.  In particular, for $s\in S$, we shall abbreviate
\[ X_s=X_{\alpha_s}=U\cap s\inv w_0\inv Uw_0s \]
(where $w_0\in W$ is the longest element).  Each root subgroup is isomorphic to $\F_q$, and is normalized by $T$,
%\[ T\iso(\F_q^\times)^n, \]
with the action being $\F_q$-linear.
The root subgroups $X_\alpha$ satisfy the ``Chevalley commutator relations:''
%\begin{lemma}\label{chevalley}
\[ [X_\alpha,X_\beta]\leq\langle X_{i\alpha+j\beta} | i,j>0 \rangle \]
for all $\alpha,\beta\in\Phi$ with $\alpha\neq\pm\beta$.
(Note that $X_\gamma$ should be interpreted as trivial when $\gamma\notin\Phi$.)
%\end{lemma}

Also define
\[ U_s=U\cap s\inv Us=\prod_{\alpha\in\Phi^+-\{\alpha_s\}}X_\alpha, \]
a normal subgroup of $U$ by the commutator relations (since $U$ is generated by the positive root subgroups).
The composite
\[ X_s\to U\to U/U_s \]
is an isomorphism.

Lastly, each $\alpha\in\Phi^+$ can be written
\[ \alpha=\sum_{s\in S} c_s\alpha_s \]
with the $c_i$ nonnegative integers; we define the height of $\alpha$ to be
\[ \hgt(\alpha) = \sum_{s\in S}c_s \in\bbZ^+ \]
and the Coxeter number of $G$ to be $h=\max\set{\hgt(\alpha)+1}{\alpha\in\Phi^+}$.

The reader may wish to keep in mind the case $G=GL_n\F_{p^r}$, in which $n$ is the Coxeter number, $B$ (resp.~$U$) the group of invertible (resp.~unipotent) upper triangular matrices, $T$ the diagonal matrices,
$X_s$ the subgroups
\[ X_k=\set{(a_{ij})\in GL_n\F_{p^r}}{a_{ij}=\delta_{ij}\mbox{ unless }(i,j)=(k,k+1)}, \]
and $U_s$ the subgroups
\[ U_k=\set{(a_{ij})\in U}{a_{k,k+1}=0}. \]

\section{Commuting regular unipotents}
Our goal is to provide an injection (Theorem~\ref{injection}) from the $T$-invariant cohomology of a root subgroup $X_s$ to the cohomology of $G$, by composing the pullback to $H^*(B;\F_p)$ with the transfer map up to $H^*(G;\F_p)$.  We will verify that the composite is injective by then restricting to a certain elementary abelian $p$-subgroup consisting of regular unipotent elements.  This section is devoted to showing the existence of such a subgroup (Corollary~\ref{ptorus}).

We remark that, in the case $G=GL_n\F_{p^r}$, this section may be bypassed and Corollary~\ref{ptorus} proved by constructing the required subgroup directly as the (elementary abelian) subgroup generated by the matrices
\[ I+\lambda \begin{bmatrix}
0 & 1 & & \\ & \ddots & \ddots & \\ & & & 1 \\ & & & 0
\end{bmatrix}, \]
where $\lambda$ ranges over a choice of basis for $\F_{p^r}$ over $\F_p$.

\subsection{Regular unipotents}
\begin{defn}
For our purposes, an element $x\in G$ is unipotent if it lies in a conjugate of $U$, or equivalently, its order is a power of $p$.  Furthermore, $x$ is {\bf regular unipotent} in $G$ if its action on $G/B$ has a unique fixed point.  Since $N_G(B)=B$, this is equivalent to lying in a unique conjugate of $B$.
\end{defn}
There is of course a corresponding notion for an element of the algebraic group: a unipotent element $x\in\bar G$ is {\bf regular unipotent} in $\bar G$ if its action on $\bar G/\bar B$ has a unique fixed point, or equivalently, $x$ lies in a unique Borel subgroup.

For an element $x\in G$ of the finite group, then, there are two notions of being regular unipotent.  However, they coincide.  This follows from bijectivity~\cite[sec.~1.17]{Carter} of the map
\[ G/B=\bar G^F/\bar B^F\to (\bar G/\bar B)^F, \]
together with the observation that, when a regular unipotent in $\bar G$ is fixed by $F$, the (unique) Borel subgroup containing it is preserved by $F$.

The set of regular unipotent elements in $G$ is clearly preserved by conjugacy in $G$.  In fact, the above discussion shows that it is preserved by conjugacy in $\bar G$.

There are other ways to characterize the regular unipotents: see~\cite[p.~29; Prop.~5.1.3]{Carter}.

Recall that, for each Coxeter generator $s\in S$, we defined $U_s=U\cap s\inv Us$, which is normal in $U$.
One can show: %(using the fact that each $U_s\normal U$):
\begin{lemma}\label{regularunips}
Let $x\in U$.
\begin{enumerate}[(a)]
\item $x$ is regular unipotent if and only if $x\notin U_s$ for all $s\in S$;
\item There exist regular unipotent elements in $G$.
\end{enumerate}
\end{lemma}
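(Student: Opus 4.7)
My plan is to analyze the $x$-fixed points on the flag variety $G/B$. Borel subgroups of $G$ containing $x$ correspond bijectively to such fixed cosets, and since $x\in U\subseteq B$ the coset $eB$ is always fixed. Regularity of $x$ therefore amounts to $eB$ being the only such fixed point.

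Direction ($\Leftarrow$) of (a) is immediate. If $x\in U_s = U\cap s\inv U s$, lift $s$ to $\dot s\in N$; then $\dot s x \dot s\inv \in U\subseteq B$, so $x\in \dot s\inv B\dot s$. This is a second Borel containing $x$, because $N_G(B)=B$ and $\dot s\notin B$ (else $s=1$ in $W=N/T$).

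The main obstacle is direction ($\Rightarrow$) of (a), which I would prove by contrapositive. Suppose $x$ fixes some $gB\neq eB$. By the Bruhat decomposition, $gB = u\dot w B$ for some $u\in U$ and $w\in W$ with $w\neq 1$. The fixed-point equation $u\inv x u\in \dot w B\dot w\inv$, intersected with $U$, yields
\[ u\inv x u\in U\cap\dot w B\dot w\inv = U\cap\dot w U\dot w\inv = \prod_{\alpha>0,\ w\inv\alpha>0} X_\alpha, \]
the key identification arising from the semidirect product $B=U\semidirect T$ and the fact that conjugation by $\dot w$ permutes root subgroups via $w$. Since $w\neq 1$, some simple root $\alpha_s$ is sent to a negative root by $w\inv$ (take the rightmost factor of any reduced expression for $w$), so the product above omits $X_s$ and is contained in $U_s$. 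Hence $u\inv x u\in U_s$; because $U_s\normal U$, conjugating back gives $x\in U_s$, as desired. Establishing the root-subgroup identification above is the technical step, but it follows routinely from the Chevalley commutator relations; see e.g.\ Carter~\cite{Carter}, Proposition~5.1.3.

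For (b), I construct a regular unipotent explicitly. Choose for each simple $s$ a nonzero $x_s\in X_s$ and set $x=\prod_{s\in S} x_s$ in any fixed ordering. For $t\in S$ and any $s\neq t$ we have $x_s\in X_s\subseteq U_t$, so the image of $x$ in $U/U_t\iso X_t$ equals $x_t$, which is nontrivial. Hence $x\notin U_t$ for any $t\in S$, and part (a) then shows $x$ is regular unipotent in $G$.
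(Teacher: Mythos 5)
Your proof is correct apart from a cosmetic slip: the labels $(\Rightarrow)$ and $(\Leftarrow)$ in your treatment of (a) are swapped (what you prove under ``$(\Leftarrow)$'' is the contrapositive of the ``only if'' direction, and conversely). Your argument for (b) is essentially the one in the text. For the substantive direction of (a), though, you take a genuinely different route. After conjugating by $u$ and reducing to $u^{-1}xu\in U\cap\dot w B\dot w^{-1}$ with $w\neq1$, you use the root-subgroup description $U\cap\dot wU\dot w^{-1}=\prod_{\alpha>0,\ w^{-1}\alpha>0}X_\alpha$ together with the fact that a nontrivial $w^{-1}$ must send some simple $\alpha_s$ to a negative root, so the intersection omits $X_s$ and therefore sits inside $U_s$. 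The paper instead reduces to $x\in B_w=B\cap w^{-1}Bw$ and inducts on $\len(w)$ using the BN-pair fact $B_{sv}\le B_v$ when $\len(sv)=\len(v)+1$, ending up in some $B_s$. Both work; yours leans on the explicit decomposition of $U_v$ into root subgroups (the step you defer to Carter), whereas the paper's inductive argument stays closer to the abstract split BN-pair axioms, which is the style it deliberately adopts. Two small points you should spell out: $U\cap\dot wB\dot w^{-1}=U\cap\dot wU\dot w^{-1}$ because $\dot wU\dot w^{-1}$ is the normal Sylow $p$-subgroup of $\dot wB\dot w^{-1}$ while $U\cap\dot wB\dot w^{-1}$ is a $p$-group; and $w\neq1$ because $w=1$ would give $gB=uB=eB$.
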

\noindent These results are well-known; see e.g.~\cite[Prop.~5.1.3; Prop.~5.1.7(a)]{Carter}.  We give here a proof which differs from Carter's by emphasizing the BN-pair structure of $G$ rather than the geometry of the ambient algebraic group $\bar G$.  We will, however, defer to Carter on the basic theory of BN-pairs.\footnote{We warn the reader that our notation differs from that which Carter tabulates in~\cite[p.~50]{Carter}.}
The finite group $G$ indeed satisfies the axioms for a group with split BN-pair as in~\cite[sec.~1.18]{Carter}.

Write $B_v=B\cap v\inv Bv$, $U_v=U\cap v\inv Uv\leq B_v$, and $X_s=U\cap s\inv w_0\inv Uw_0s$, as before.
We will need to use four facts about split BN-pairs:

\begin{lemma}\label{bnfacts}
Let $(G,B,N,U)$ be a split BN-pair with Coxeter system $(W,S)$.
Let $v\in W$, and let $s,s'\in S$ be distinct.
\begin{enumerate}[(a)]
\item Bruhat Decomposition: $G=\bigcup_{w\in W}B\bar wB=\bigcup_{w\in W}B\bar wU$.
\item $B_{sv}\leq B_v$ if $\len(sv)=\len(v)+1$;
%\item $B_{vs}\leq B_s$ if $\len(vs)=\len(v)+1$;
%\item $B_{uv}\leq B_v$ if $\len(uw)=\len(u)+\len(v)$;
\item $X_s\cap U_s=1$;
\item $X_s\leq U_{s'}$.
\end{enumerate}
\end{lemma}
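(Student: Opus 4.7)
The plan is to verify each of the four statements by appealing to the root-subgroup decomposition of $U$ that accompanies the split BN-pair, as developed in Carter. The workhorse throughout is the bijective product formula $U_v = \prod_{\alpha\in\Phi^+,\ v\alpha\in\Phi^+} X_\alpha$ (factors taken in any fixed order), together with the fact that $T$ is normalized by every element of $N$.

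For (a), the first equality is the Bruhat decomposition itself, a standard BN-pair axiom. For the second, I would compute $B\bar w B = B\bar w TU = B(\bar w T\bar w^{-1})\bar w U = BT\bar w U = B\bar w U$, using $\bar w\in N$ and $T\leq B$.

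For (b), I would use the well-known equivalence $\len(sv)=\len(v)+1\iff v^{-1}\alpha_s>0$. Under this hypothesis, the index set of $U_{sv}$ sits inside that of $U_v$: if $\alpha\in\Phi^+$ and $v\alpha<0$, then $sv\alpha<0$, because the simple reflection $s$ only fails to preserve negativity on $-\alpha_s$, and the equation $v\alpha=-\alpha_s$ would force $\alpha=-v^{-1}\alpha_s<0$, a contradiction. Hence $U_{sv}\leq U_v$; combining with $B_v=TU_v$ (which holds since $T$ normalizes each root subgroup and $B=TU$) gives $B_{sv}\leq B_v$.

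Parts (c) and (d) both reduce to the specialization $U_s=\prod_{\alpha\in\Phi^+\setminus\{\alpha_s\}} X_\alpha$, since $s$ permutes $\Phi^+\setminus\{\alpha_s\}$ and sends $\alpha_s$ to $-\alpha_s$. Uniqueness of the product representation in $U$ then yields $X_s\cap U_s=X_{\alpha_s}\cap U_s=1$, proving (c); and for (d), $s\neq s'$ implies $\alpha_s\in\Phi^+\setminus\{\alpha_{s'}\}$, so $X_s=X_{\alpha_s}$ appears as a factor of the analogous product description of $U_{s'}$. The main point requiring care is logistical rather than conceptual: the root-subgroup structure and its Frobenius compatibility must be imported cleanly from Carter's treatment of the algebraic group $\bar G$ down to the finite group $G=\bar G^F$, after which each item is a short combinatorial verification in the root system.
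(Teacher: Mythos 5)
Your proof is correct, but it takes a genuinely different route from the paper's. The paper dispatches all four items by citing specific results of Carter (Thm.~2.5.14 or Prop.~2.1.1 for (a), Prop.~2.5.4 for (b), Prop.~2.5.11 for (c), and a short derivation from Props.~2.2.6 and 2.5.7(i) for (d)), so the work is outsourced to the general theory of split BN-pairs. You instead deduce everything directly from the root-subgroup product decomposition $U_v=\prod_{\alpha\in\Phi^+,\ v\alpha\in\Phi^+}X_\alpha$ and the length formula $\len(sv)=\len(v)+1\iff v^{-1}\alpha_s>0$; your calculations in (a)--(d), in particular the argument that $v\alpha<0$ forces $sv\alpha<0$ under that length hypothesis, and the identifications $U_s=\prod_{\alpha\in\Phi^+\setminus\{\alpha_s\}}X_\alpha$ and $X_s=X_{\alpha_s}$, all check out. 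The trade-off is that your approach makes the root-system combinatorics explicit and easy to verify, but it leans on the root-subgroup structure and unique factorization in $U$ (which is itself a nontrivial input from Carter), so it is not really independent of Carter --- it just cites a different layer of the theory. You also implicitly use $B_v=TU_v$ in (b); this is true in a split BN-pair but should be stated explicitly, since it is exactly the step where the ``split'' hypothesis is used. Both proofs are valid; the paper's is shorter by deferring, yours is more self-contained once the root-subgroup machinery is in hand, which you rightly flag as the one piece requiring care.
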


\begin{proof}
\begin{enumerate}[(a)]
\item Implied by~\cite[Thm.~2.5.14]{Carter} or~\cite[Prop.~2.1.1]{Carter}.
\item This is~\cite[Prop.~2.5.4]{Carter}.
\item This is~\cite[Prop.~2.5.11]{Carter}.
\item First, by~\cite[Prop.~2.2.6]{Carter} and~\cite[Prop.~2.5.7(i)]{Carter} (with $w=s'$), we have $B_{w_0s}\leq B_{s'}$.  Since for all $v\in W$, $U_v\leq B_v$ is a normal Sylow $p$-subgroup, this shows $X_s=U_{w_0s}\leq U_{s'}$. \qedhere
\end{enumerate}
\end{proof}

\begin{proof}[Proof of Lemma~\ref{regularunips}]\hfill
\begin{enumerate}[(a)]
\item The ``only if'' direction is clear from the definition of regular unipotent elements, since $U_s\leq B\cap s\inv Bs$, the intersection of two distinct\footnote{By one of the BN-pair axioms:~\cite[p.42,~axiom~(iv)]{Carter}} conjugates of $B$.  For the converse, suppose $x\in U$ is not regular unipotent.  Then there exists $g\in G$ such that $x\in g\inv Bg\neq B$.  Using the Bruhat decomposition,
we may write $g=b\bar wu$, with $u\in U$, $b\in B$, and $\bar w\in N$ a representative for $w\in W$.  We may clearly assume $b=1$.  We wish to replace $x$ with $uxu\inv$ and thereby assume $u=1$.  This does not affect whether our element is regular unipotent (since regularness is constant on conjugacy classes); neither does it affect membership in $U_s\normal U$.  Now we have reduced to the case $g=\bar w$, that is,
$x\in B_w=B\cap w\inv Bw$.  Note that $w\neq1$ since $w\inv Bw\neq B$.  It will therefore suffice to check that $B_w\leq B_s$ for some $s\in S$.
By induction on $\len(w)$, this follows from Lemma~\ref{bnfacts}(b).
%~\cite[Lem.~2.3]{Richen}:
%\[ B_{sv}\leq B_v \mbox{ if } \len(sv)=\len(v)+1. \]
%[see also my notes p. 89-90]
%or \cite[Prop.~2.5.4]{Carter}
\item Choose some ordering $S=\{s_1,\dots, s_n\}$.  For each $i$, pick a nontrivial element
\[ x_i\in X_{s_i} = U_{w_0s_i}. \]
Then set $x=x_1\cdots x_n\in U$.  By Lemma~\ref{bnfacts}(c,d), we know $x_j\in U_{s_i}$ whenever $i\neq j$, and that $x_i\notin U_{s_i}$.  Therefore $x\notin U_{s_i}$ either, as desired. \qedhere
\end{enumerate}
\end{proof}

\subsection{Elementary abelian overgroups}
We wish to show (Corollary~\ref{ptorus}) that, when the Coxeter number of $G$ is at most $p$, there exists an elementary abelian $p$-subgroup of rank $r$, in which every nontrivial element is regular unipotent.  (Recall $q=p^r$.)  This follows from two facts:
\begin{enumerate}
\item When the Coxeter number of $G$ is at most $p$, every nontrivial unipotent element has order $p$.
\item When $x\in G$ has order $p$, its $\bar G$-conjugacy class (plus the identity) contains an elementary abelian $p$-subgroup of rank $r$.
\end{enumerate}

The former statement is no doubt familiar, but we include an elementary proof at the end of this chapter (Proposition~\ref{exponent}).  We shall show the latter using Testerman's Theorem on $A_1$-overgroups:

\begin{thm}[Testerman~\cite{Testerman}]
Let $\bar G$ be a semisimple algebraic group over an algebraically closed field $k$ of nonzero characteristic $p$.  Assume $p$ is a good prime\footnote{This means that $p$ does not divide the coefficients of any root $\alpha\in\Phi^+$ when expressed in the simple root basis.}
for $\bar G$.  Let $\sigma$ be a surjective endomorphism of $\bar G$ with finite fixed-point subgroup.  Let $u\in\bar G^\sigma$ with $u^p=1$.  Then there exists a closed connected subgroup $X$ of $\bar G$ with $\sigma(X)\leq X$, $u\in X$, and $X$ isomorphic to $SL_2(k)$ or $PSL_2(k)$.
\end{thm}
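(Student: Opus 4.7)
The plan is to reduce the problem to a Lie-algebra statement via a Springer isomorphism, apply a good-characteristic version of Jacobson--Morozov to produce an $\mathfrak{sl}_2$-triple, integrate it to an $A_1$-subgroup, and finally arrange $\sigma$-stability by a Lang-type conjugacy argument.

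First I would pass from the unipotent element to a nilpotent one. In good characteristic there exists a $\bar G$-equivariant Springer isomorphism $\exp: \N \to \mathcal{U}$ between the nilpotent variety of $\mathfrak{g} = \mathrm{Lie}(\bar G)$ and the unipotent variety of $\bar G$; let $X \in \mathfrak{g}$ be the unique nilpotent element with $\exp(X) = u$. The hypothesis $u^p = 1$ translates (again using that $p$ is good) into the restricted-nilpotency condition $X^{[p]} = 0$, which is precisely what one needs in order to integrate things back to the group.

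Second I would apply Jacobson--Morozov in good characteristic (Carter, Pommerening, Premet) to embed $X$ into an $\mathfrak{sl}_2$-triple $(X, H, Y)$ inside $\mathfrak{g}$. Because $p$ is good and $X$ is $p$-restricted, each member of this triple is $p$-restricted, and then a standard integration result produces a homomorphism of algebraic groups $\phi: SL_2(k) \to \bar G$ (with image isomorphic to $SL_2(k)$ or $PSL_2(k)$, depending on the kernel of $\phi$ on the center) whose differential recovers $(X,H,Y)$ and which sends $\bigl(\begin{smallmatrix}1 & 1 \\ 0 & 1\end{smallmatrix}\bigr)$ to $\exp(X) = u$. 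Call the image $X_0 \leq \bar G$; this is a closed connected $A_1$-subgroup containing $u$.

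The main obstacle, and what requires the most care, is upgrading $X_0$ to a subgroup satisfying $\sigma(X_0) \leq X_0$. Applying $\sigma$ to the construction above yields another connected $A_1$-subgroup $\sigma(X_0)$ that also contains $u$ (since $\sigma(u) = u$). The strategy is to show that the set $\mathcal{A}_u$ of connected $A_1$-overgroups of $u$ in $\bar G$ forms a single orbit under $C_{\bar G}(u)^0$; this uses the structure of centralizers of restricted nilpotents together with uniqueness properties of the $\mathfrak{sl}_2$-triple completing $X$ up to the adjoint action of $C_{\bar G}(X)^0$. Granted this, $\sigma$ permutes $\mathcal{A}_u$ and acts on the homogeneous space $C_{\bar G}(u)^0 / N_{C_{\bar G}(u)^0}(X_0)$. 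A Lang--Steinberg argument applied to the connected group $C_{\bar G}(u)^0$ (which is preserved by $\sigma$) then furnishes $h \in C_{\bar G}(u)^0$ with $h^{-1} \sigma(h) \in N_{C_{\bar G}(u)^0}(X_0)$, so that $X := h X_0 h^{-1}$ is $\sigma$-stable and still contains $u = h u h^{-1}$. The hard part is controlling the centralizer $C_{\bar G}(u)^0$ finely enough to justify both the transitivity claim and the applicability of Lang--Steinberg; this is where the goodness of $p$ is genuinely used, and where I would expect the bulk of the technical work to lie.
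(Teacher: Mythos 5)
The paper does not prove this statement; it quotes it verbatim as Testerman's theorem from~\cite{Testerman}. What the paper itself proves is only the small extension from semisimple groups to connected reductive groups, by passing to the derived subgroup, so there is no in-paper proof of the quoted statement to compare your attempt against.

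As for the attempt itself: your outline is quite different from Testerman's original argument, which analyzes unipotent classes case by case via the classification, constructing the required $A_1$-subgroups explicitly out of root subgroups of a suitable parabolic. Your route---Springer isomorphism, a good-characteristic Jacobson--Morozov, integration to an $SL_2$-homomorphism, and a Lang--Steinberg twist---is closer to the later, uniform treatments of Seitz and McNinch, but as sketched it has two substantial gaps. First, producing a closed $A_1$-subgroup from an $\mathfrak{sl}_2$-triple in characteristic $p$ is not a routine ``integration'': one needs the theory of optimal $SL_2$-homomorphisms (which postdates Testerman) to get a morphism $SL_2\to\bar G$ with the right differential; and you have also implicitly assumed that your Springer isomorphism intertwines the conditions $u^p=1$ and $X^{[p]}=0$, which does not hold for an arbitrary Springer isomorphism and requires a careful choice. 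Second, and more seriously, the transitivity claim---that $C_{\bar G}(u)^0$ acts transitively on the set of \emph{all} connected $A_1$-overgroups of $u$---is false in general, even for elements of order $p$. What is true (Seitz) is that the \emph{good} $A_1$-overgroups of $u$ form a single $C_{\bar G}(u)^0$-orbit, so before Lang--Steinberg can be invoked you would first have to show that both $X_0$ and $\sigma(X_0)$ are good. These two points are precisely where the theorem's real content lives, so filling in the sketch would amount to reproducing a large portion of Seitz's and McNinch's subsequent work rather than recovering Testerman's original argument.
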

We begin by extending Testerman's theorem to reductive groups.
\begin{thm}\label{Testerman}
Let $\bar G$ be a connected reductive algebraic group over an algebraically closed field $k$ of nonzero characteristic $p$.  Assume $p$ is a good prime for $\bar G$.  Let $\sigma$ be a surjective endomorphism of $\bar G$ with finite fixed-point subgroup.  Let $u\in\bar G^\sigma$ with $u^p=1$.  Then there exists a closed connected subgroup $X$ of $\bar G$ with $\sigma(X)\leq X$, $u\in X$, and $X$ isomorphic to $SL_2(k)$ or $PSL_2(k)$.
\end{thm}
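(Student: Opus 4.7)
The plan is to reduce to the semisimple case already handled by Testerman, by passing to the derived subgroup $\bar G' = [\bar G, \bar G]$. Since $\bar G$ is connected reductive, $\bar G'$ is connected semisimple, and $\bar G = Z(\bar G)^\circ \cdot \bar G'$ with $Z(\bar G)^\circ$ a central torus.

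First, I would show that $u$ actually lies in $\bar G'$. The quotient $\bar G / \bar G'$ is a torus, hence contains no nontrivial unipotent elements; thus $u$ maps to the identity in the quotient and so belongs to $\bar G'$. Next, the endomorphism $\sigma$ restricts to an endomorphism of $\bar G'$, since derived subgroups are sent to derived subgroups, and this restriction is surjective: $\sigma(\bar G') = [\sigma(\bar G),\sigma(\bar G)] = [\bar G,\bar G] = \bar G'$. The fixed-point subgroup $(\bar G')^\sigma$ sits inside $\bar G^\sigma$ and so is finite as well. Finally, the root system of $\bar G'$ coincides with that of $\bar G$, so $p$ being good for $\bar G$ implies it is good for $\bar G'$.

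Having verified all the hypotheses, Testerman's theorem applied to $\bar G'$ produces a closed connected subgroup $X \leq \bar G'$ containing $u$, isomorphic to $SL_2(k)$ or $PSL_2(k)$, and satisfying $\sigma(X) \leq X$. Since $X \leq \bar G' \leq \bar G$, this $X$ is the subgroup required by the statement.

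The proof is essentially bookkeeping; I do not anticipate a real obstacle. The one place where care is needed is confirming that passing from $\bar G$ to $\bar G'$ preserves the goodness of $p$ and the surjectivity/finite-fixed-point hypotheses on $\sigma$; each of these is immediate from the structure theory of connected reductive groups, but worth making explicit so the reduction is airtight.
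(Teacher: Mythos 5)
Your proof is correct and follows the same approach as the paper's: pass to the derived subgroup $\bar G'$, verify that the hypotheses of Testerman's theorem (semisimplicity, good prime, surjective endomorphism with finite fixed points, $u \in \bar G'$) are inherited, and invoke Testerman. The only cosmetic difference is in how surjectivity of $\sigma|_{\bar G'}$ is established — you use $\sigma([\bar G,\bar G]) = [\sigma(\bar G),\sigma(\bar G)]$, while the paper observes that the image is a closed subgroup of $\bar G'$ of the same dimension — but these are interchangeable one-line justifications for the same fact.
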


\begin{proof}
Since $\bar G$ is connected reductive, its derived subgroup $\bar G'$ is closed~\cite[sec.~2.3]{Borel}, connected and semisimple; it is preserved by $\sigma$.  We claim the restriction of $\sigma$ to $\bar G'$ remains surjective.  This is because its image is a closed subgroup of $\bar G'$ with the same dimension.  Furthermore, $\bar G'$ contains all the unipotent elements of $\bar G$, including $u$; it also has the same root system.  Therefore, Testerman's theorem applies, producing the desired subgroup $X$.
\end{proof}

Using this, we will obtain:
\begin{prop}\label{conj}
Let $\bar G$ be a connected reductive group with a $p^r$-Frobenius map $F$.  Assume $p$ is a good prime for $\bar G$.  If $u\in\bar G^F$ has order $p$, then there exists an elementary abelian $p$-subgroup of rank $r$ in $\bar G^F$, containing $u$, all of whose nontrivial elements are conjugate in $\bar G$.
\end{prop}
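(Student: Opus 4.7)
The plan is to invoke Theorem~\ref{Testerman} directly on $u$ and then work entirely inside the small subgroup it produces. Since $p$ is a good prime for $\bar G$ and $u^p=1$, the theorem yields a closed connected $F$-stable subgroup $X\leq \bar G$ containing $u$ and isomorphic as an algebraic group over $k$ to either $SL_2(k)$ or $PSL_2(k)$.

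Next I would verify that $F|_X$ is itself a $p^r$-Frobenius on $X$, so that $X^F$ is identified with $SL_2(\F_{p^r})$ or $PSL_2(\F_{p^r})$. This is immediate: the map $F$ on $\bar G$ is by definition the restriction of a standard $p^r$-Frobenius on some $GL_N(k)$ via an embedding $\bar G\hookrightarrow GL_N(k)$, and since Theorem~\ref{Testerman} ensures $F(X)\leq X$, the composite embedding $X\hookrightarrow GL_N(k)$ realizes $F|_X$ as a $p^r$-Frobenius.

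With this identification, Sylow's theorem applied to $X^F$ produces a Sylow $p$-subgroup $E\leq X^F$ containing $u$ (using that $u$ has order $p$). All Sylow $p$-subgroups of $SL_2(\F_{p^r})$ or $PSL_2(\F_{p^r})$ are conjugate to the upper-triangular unipotent subgroup, which is isomorphic to the additive group $(\F_{p^r},+)\cong C_p^r$: elementary abelian of rank exactly $r$. So $E\leq \bar G^F$ is the required elementary abelian subgroup of rank $r$ containing $u$.

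To verify the conjugacy claim, pass to $X(k)$ and let $T_X\leq X$ be a maximal torus normalizing the ambient one-parameter unipotent subgroup $\Ga(k)$ of $X$ that contains $E$. Then $T_X(k)\cong k^\times$ acts on $\Ga(k)$ through a nontrivial character $k^\times\to k^\times$, which is surjective since $k$ is algebraically closed; hence $T_X(k)$ acts transitively on $\Ga(k)\setminus\{1\}$, and in particular on $E\setminus\{1\}$, so all nontrivial elements of $E$ are conjugate in $X\leq \bar G$. The only genuinely nontrivial ingredient is Theorem~\ref{Testerman}; beyond it, everything is standard bookkeeping inside $SL_2$ or $PSL_2$.
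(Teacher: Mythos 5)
Your proposal follows the same route as the paper: invoke the reductive extension of Testerman's theorem to place $u$ inside an $F$-stable copy of $SL_2(k)$ or $PSL_2(k)$, and then exploit the known structure of that small group. The one place where you are too hasty is the step where you conclude that ``$X^F$ is identified with $SL_2(\F_{p^r})$ or $PSL_2(\F_{p^r})$.'' What you have established ``immediately'' is that $F|_X$ is \emph{a} $p^r$-Frobenius map in the paper's sense (a restriction of $F_{p^r}$ on some ambient $GL_N$). It is a separate fact, not immediate from the definition, that a $p^r$-Frobenius on $X\iso SL_2(k)$ necessarily has fixed-point group $SL_2(\F_{p^r})$ rather than $SL_2(\F_{p^a})$ for some other $a$; a priori nothing in the embedding-based definition pins down the parameter. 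The paper addresses exactly this point with Lemma~\ref{Ga}: part (a) shows how to recover the exponent $r$ from a $q$-Frobenius acting on a copy of $\Ga$ inside $GL_N$, and part (b) then gives that the $F$-fixed points of a $\Ga$ preserved by $F_{p^r}$ form a $C_p^r$. So the paper does not try to identify all of $X^F$; it only examines the unipotent radical of an $F$-stable Borel of $X$, which is $\Ga$, and applies Lemma~\ref{Ga} to see its $F$-fixed points have rank exactly $r$.

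With that correction, your argument goes through and is essentially equivalent to the paper's: you take the Sylow $p$-subgroup of $X^F$ containing $u$ (which is the $F$-fixed points of a root $\Ga$, hence of rank $r$ by Lemma~\ref{Ga}), and you verify the single-conjugacy-class property via the transitive torus action on $\Ga\setminus\{1\}$ --- which is exactly the paper's observation that all order-$p$ elements of $SL_2(k)$ or $PSL_2(k)$ are conjugate. One small point in your favor: by applying Sylow's theorem to land $u$ inside $E$, you make explicit that the constructed subgroup actually contains $u$, a detail the paper's own proof leaves implicit (one must observe that $u$ may be conjugated, within $X^F$, into the fixed unipotent radical $U^F$).
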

For the finite groups of Lie type $\bar G^F$ with Coxeter number at most $p$, this proposition applies (by Proposition~\ref{exponent}) to the regular unipotent elements:
\begin{cor}\label{ptorus}
Let $G$ be a nontrivial\footnote{I.e.~positive rank.} group of Lie type over $\F_{p^r}$ with Coxeter number at most $p$.  There exists an elementary abelian $p$-subgroup of rank $r$ in $G$, all of whose nontrivial elements are regular unipotent.
\end{cor}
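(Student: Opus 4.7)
The plan is to combine three ingredients from earlier in the chapter: existence of a regular unipotent element, the fact that unipotents have order $p$ when $h \leq p$, and the elementary abelian overgroup result of Proposition~\ref{conj}. The main subtlety is that Proposition~\ref{conj} requires $p$ to be a good prime for $\bar G$, so I will need to check that the Coxeter number hypothesis $h \leq p$ automatically forces this.

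To carry out the main argument, I would first apply Lemma~\ref{regularunips}(b) to produce a regular unipotent element $u \in G$. Since $h \leq p$, Proposition~\ref{exponent} (proved at the end of this chapter) guarantees $u^p = 1$, so $u$ satisfies the hypothesis of Proposition~\ref{conj}. Feeding $u$ into Proposition~\ref{conj} then yields an elementary abelian $p$-subgroup $A \leq \bar G^F = G$ of rank $r$ containing $u$, with every nontrivial element $\bar G$-conjugate to $u$. Since conjugation in $\bar G$ preserves regular unipotency (as observed right after the definition of regular unipotent, via the bijection $G/B \to (\bar G/\bar B)^F$), every nontrivial element of $A$ is regular unipotent in $G$.

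The remaining task — and the only step with any real content — is the verification that $h \leq p$ implies $p$ is good for $\bar G$. Since both goodness and the Coxeter number reduce to the simple factors, the check is a type-by-type comparison of the list of bad primes ($2$ for $B_n, C_n, D_n$; $2$ and $3$ for $G_2, F_4, E_6, E_7$; $2, 3, 5$ for $E_8$; none for $A_n$) against the corresponding Coxeter number ($2n$ for $B_n, C_n$; $2n-2$ for $D_n$; $6, 12, 12, 18, 30$ for $G_2, F_4, E_6, E_7, E_8$). In each case the largest bad prime is strictly less than $h$, so the hypothesis $p \geq h$ excludes every bad prime of every simple factor of $\bar G$, and Proposition~\ref{conj} applies as required.
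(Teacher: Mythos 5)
Your proposal is correct and follows essentially the same path the paper takes: use Lemma~\ref{regularunips}(b) to obtain a regular unipotent element, invoke Proposition~\ref{exponent} to see it has order $p$, and then feed it into Proposition~\ref{conj}, using the observation that $\bar G$-conjugacy preserves regular unipotency. The only place you add detail is the type-by-type verification that $h \leq p$ forces $p$ to be a good prime; the paper merely asserts this in a one-sentence remark, so your explicit check (bad primes bounded by $5 < 30$ in the worst case $E_8$, and always strictly below $h$ in every type) is a welcome, correct elaboration rather than a different argument.
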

We point out that the assumption of $p$ being a good prime is automatic when $G$ has Coxeter number at most $p$.

In order to prove the proposition, we need the following lemma on Frobenius maps of a one-dimensional additive group.  It may be viewed as showing how to recover $q$ from a $q$-Frobenius map.  Specifically, it is not possible for a $q$-Frobenius map and a $q'$-Frobenius map to agree on a reductive group of positive rank unless $q=q'$ (apply the lemma to a root subgroup).

\begin{lemma}\label{Ga}
Let $k$ be an algebraically closed field of characteristic $p$.
\nopagebreak[2]
\begin{enumerate}[(a)]
\item Let $\phi:k[x]\to k[x]$ be determined by $\phi(x)=cx^{p^d}$ for some $c\in k^\times$ and $d\geq0$.  If $p(x)$ is any nonconstant polynomial such that $\phi(p(x))=p(x)^{p^e}$, then $d=e$.
\item Let $X\leq GL_N(k)$ be a (closed) subgroup isomorphic to the additive group $\mathbb{G}_a(k)$, which is preserved by the standard Frobenius map $F_{p^r}$.  Then (as finite groups)
\[ X^{F_{p^r}}\iso C_p^r. \]
\end{enumerate}
\end{lemma}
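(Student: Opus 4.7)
Part (a) is just a comparison of top degrees in $x$. If $p(x)$ has degree $n\geq 1$ with leading coefficient $a_n$, then $\phi(p(x)) = \sum_i a_i c^i x^{ip^d}$ has top-degree term $a_n c^n x^{np^d}$, while $p(x)^{p^e}$ has top-degree term $a_n^{p^e} x^{np^e}$; matching degrees gives $np^d = np^e$, and hence $d=e$.

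For part (b), the plan is to transport the action of $F_{p^r}$ on $X$ to an endomorphism of $\Ga$ and count its fixed points directly. Fix a group isomorphism $\alpha:\Ga\to X$; then via $\alpha$ the restriction $F_{p^r}|_X$ corresponds to an endomorphism of $\Ga$, which must be a $p$-polynomial $\phi(t) = c_0 t + c_1 t^p + \cdots + c_d t^{p^d}$ with $c_d\neq 0$. Writing $\alpha(t) = (f_{ij}(t))$ with $f_{ij}\in k[t]$ the coordinate functions of the embedding, the fact that $F_{p^r}$ raises each matrix entry to the $p^r$-th power translates into the system of identities
\[ f_{ij}(\phi(t)) = f_{ij}(t)^{p^r} \quad\text{for all }i,j. \]
Applying the degree argument of part (a) to any nonconstant $f_{ij}$ forces $\deg\phi = p^r$, so $d=r$ and $c_r\neq 0$.

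The main obstacle, and the only step that genuinely uses the embedding into $GL_N$, is to show $c_0=0$. I differentiate the identities above with respect to $t$: the right-hand sides have derivative $0$ in characteristic $p$, so $f'_{ij}(\phi(t))\cdot\phi'(t) = 0$ in $k[t]$. Since $\phi'(t) = c_0$ (every other summand of $\phi$ is a $p$-th power, with zero derivative), if $c_0$ were nonzero this would force $f'_{ij}(\phi(t))\equiv 0$ in $k[t]$, and hence $f'_{ij}\equiv 0$ for every $i,j$ (because $\phi$ is nonconstant). But then every $f_{ij}$ would lie in $k[t^p]$, contradicting the fact that the $f_{ij}$ generate $k[X]=k[t]$ as a $k$-algebra. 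Hence $c_0=0$.

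Given $c_0 = 0$ and $c_r\neq 0$, the polynomial $\phi(t)-t$ has degree $p^r$ and derivative $-1\neq 0$, so it is separable with exactly $p^r$ distinct roots in the algebraically closed field $k$. These roots form the subgroup $X^{F_{p^r}}\leq \Ga(k)=(k,+)$; being of order $p^r$ and killed by $p$, it is elementary abelian of rank $r$, hence isomorphic to $C_p^r$.
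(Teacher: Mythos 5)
Your proof is correct, and it takes a genuinely different route from the paper's in part (b). The paper's argument starts from the observation that $F_{p^r}$ is injective on $GL_N(k)$, hence its transport $\phi$ to $\Ga$ is an \emph{injective} additive endomorphism; it then invokes the classification of injective endomorphisms of $\Ga$ over an algebraically closed field to conclude that $\phi(u)=cu^{p^d}$ is a monomial, solves explicitly for the fixed locus $\frac{1}{b}\F_{p^d}$ where $b^{p^d-1}=c$, and only then applies part (a) to the coordinate functions to pin down $d=r$. You instead avoid the injectivity argument entirely: after using the degree comparison on the coordinate identities $f_{ij}(\phi(t))=f_{ij}(t)^{p^r}$ to get $\deg\phi=p^r$, you differentiate those same identities to force $c_0=0$ (the only nontrivial content the embedding into $GL_N$ contributes, in both approaches), and then count fixed points as the $p^r$ distinct roots of the separable polynomial $\phi(t)-t$. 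What the paper's route buys is a more structural picture — an explicit description of the fixed subgroup inside $k$ — at the cost of needing the classification of injective additive endomorphisms; what your route buys is self-containedness (a degree count and a derivative computation suffice) and it sidesteps the monomial normal form altogether, since separability of $\phi(t)-t$ only needs $c_0=0$, not that every other $c_i$ vanishes. One small remark: your step deducing $d=r$ is not literally an application of part (a) as stated (which assumes $\phi$ is a monomial $cx^{p^d}$), but the degree comparison you give is a direct generalization and is clearly correct as written.
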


\begin{proof}
The first statement is easily checked by expanding and matching nonzero terms.
The second will follow after observing that any injective endomorphism of $\mathbb{G}_a$ must have the form $u\mapsto cu^{p^d}$ for some $c\in k^\times$ and $d\geq0$.  This is because
an endomorphism of $\Ga$ is given by a single polynomial function, which must be $\F_p$-linear, i.e.~has the form
\[ u\mapsto\sum_{i=0}^k a_i u^{p^i}; \]
Such a map is injective if it has trivial kernel, which occurs if and only if precisely one $a_i$ is nonzero.  So it must take the form $F:u\mapsto cu^{p^d}$.  The invariants of such a map are
\[ \Ga^F=\frac{1}{b}\F_{p^d}, \]
where $b^{p^d-1}=c$.  In particular,
\[ \Ga^F\iso C_p^d \]
as finite groups.  It remains only to show that $d=r$, for which we apply part (a) to the coordinate functions of the embedding $X\leq GL_N(k)$, at least one of which is nonconstant.
\end{proof}

\begin{proof}[Proof of Proposition~\ref{conj}]
A $p^r$-Frobenius map $F$ is surjective with finitely many fixed points.  Then by Theorem~\ref{Testerman}, we know that $u$ is contained in a closed $F$-invariant subgroup isomorphic to either $SL_2(\bar\F_p)$ or $PSL_2(\bar\F_p)$.  In these groups, every element of order $p$ is conjugate.  Furthermore, the unipotent radical $U$ of an $F$-invariant Borel subgroup is isomorphic to $\mathbb{G}_a$, hence by Lemma~\ref{Ga} its $F$-invariants form an elementary abelian $p$-subgroup of rank $r$, all of whose nontrivial elements are conjugate in $\bar G$.
\end{proof}

\section{Cohomology}\label{cohomology}
Now we have all the ingredients in place for our theorem on cohomology of $G$.
Recall that $q=p^r$; $G,T,B,U,S$ are, respectively, a finite group of Lie type over $\F_q$, a maximal torus, a Borel containing $T$, its unipotent radical, and the set of Coxeter generators for the Weyl group of $G$.  For each $s\in S$, $X_s$ is the corresponding simple root subgroup, which is isomorphic to $\F_q$.

\begin{thm}\label{injection}
Let $G$ be a finite group of Lie type over $\F_q$ having Coxeter number at most $p$.  For each $s\in S$, there is an injective graded vector space map from $H^*(X_s;\F_p)^T$ to $H^*(G;\F_p)$.  It is given by composing the pullback $H^*(X_s\semidirect T;\F_p)\to H^*(B;\F_p)$ with transfer $H^*(B;\F_p)\to H^*(G;\F_p)$.  Moreover, it is a module homomorphism over the Steenrod algebra.
\end{thm}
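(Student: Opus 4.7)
The plan is to post-compose the claimed map with restriction $\res{E}{G}$ to the elementary abelian $p$-subgroup $E \leq G$ of rank $r$ supplied by Corollary~\ref{ptorus}, all of whose nontrivial elements are regular unipotent. Because $E$ is a $p$-subgroup, it lies inside some Sylow $p$-subgroup of $G$, hence inside some conjugate Borel $g_0 B g_0\inv$ for a fixed $g_0 \in G$. I will show the further composite $H^*(X_s;\F_p)^T \to H^*(G;\F_p) \xrightarrow{\res{E}{G}} H^*(E;\F_p)$ is already injective --- in fact induced by an isomorphism of groups $E \to X_s$. The Steenrod module claim will then be automatic: the pullback and the restriction isomorphism of property~\ref{normalsylow} (identifying $H^*(X_s;\F_p)^T$ with $H^*(X_s \semidirect T;\F_p)$) are algebra maps, hence respect Steenrod operations by naturality, while the transfer commutes with Steenrod operations, a classical fact provable via the Becker-Gottlieb transfer on classifying spaces.

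The central computation is $\res{E}{G} \circ \tr{B}{G}$ via the double coset formula. For a representative $g \in E \backslash G / B$, any nontrivial element of $E \cap g B g\inv$ would be a regular unipotent lying in the Borel $gBg\inv$; since regular unipotents lie in a unique Borel, $gBg\inv$ would coincide with $g_0 B g_0\inv$, and using $N_G(B) = B$ this forces $g$ into the double coset of $g_0$. Thus only the $g_0$ summand survives, and it collapses (since $E \cap g_0 B g_0\inv = E$ and $\tr{E}{E} = \id$) to $\res{E}{g_0 B g_0\inv} \circ c_{g_0\inv}^*$, i.e.\ pullback along the homomorphism $E \to B$, $e \mapsto g_0\inv e g_0$.

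Composing with the pullback from $X_s \semidirect T$ (and noting that the image of $E$ in $B/U_s \cong X_s \semidirect T$ lies entirely in $X_s$, as $E$ maps into $U$), the full composite $H^*(X_s;\F_p)^T \to H^*(E;\F_p)$ becomes pullback along a group homomorphism $\phi: E \to X_s$, namely $\phi(e) = [g_0\inv e g_0]$ modulo $U_s$. To finish, I claim $\phi$ is an isomorphism. Injectivity is the substantive point: a nontrivial $e \in E$ is regular unipotent, and so is its conjugate $g_0\inv e g_0$; by Lemma~\ref{regularunips}(a) a regular unipotent of $U$ lies in no $U_{s'}$, in particular not in $U_s$, so $\phi(e) \neq 0$. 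Since source and target are elementary abelian of rank $r$, $\phi$ is then an isomorphism; hence $\phi^*$ is an isomorphism on cohomology, and its restriction to the subspace $H^*(X_s;\F_p)^T$ is injective, proving the theorem. The main hinge of the argument is the double-coset bookkeeping in the middle step; the unique-Borel property of regular unipotent elements is exactly what prunes that sum down to a single term.
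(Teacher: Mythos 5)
Your proof is correct and follows essentially the same route as the paper: restrict the transfer to an elementary abelian subgroup of regular unipotents, use the double-coset formula together with the unique-Borel property of regular unipotents to reduce to a single term, and observe that the resulting map $H^*(X_s;\F_p)^T \to H^*(E;\F_p)$ is pullback along an isomorphism. The only cosmetic differences are that the paper assumes WLOG $E\leq U$ (so $g_0=1$ and the surviving double coset is the trivial one, phrased there as the unique $E$-fixed point of $G/B$), and it factors the final composite through $H^*(U/U_s)\to H^*(U)\to H^*(E)$ rather than collapsing it directly to $\phi^*$.
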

We remark that the last sentence follows because transfer maps commute with the Steenrod operations~\cite{Evens}, as do maps induced on cohomology by group homomorphisms.

\begin{proof}
Let $A$ be an elementary abelian $p$-subgroup of rank $r$ consisting of regular unipotent elements, whose existence is guaranteed by Corollary~\ref{ptorus}; we may assume $A\leq U$.  Fix an $s\in S$, and consider the composition
\[ A\to U\to U/U_s. \]
The composition is injective, because no regular unipotent lies in $U_s$.  Hence it is an isomorphism because $U/U_s\iso X_s\iso\F_q\iso A$ as groups.  We shall use this fact below.

Recall that $U_s\normal B$.  Because $B$ is generated by $T$, $U_s$, and $X_s$, the quotient is
\[ B/U_s=X_s\semidirect T. \]
For the remainder of this proof, all cohomology is with $\F_p$ coefficients, which are suppressed from the notation.
Now consider the map on cohomology given by
\[ H^*(X_s)^T=H^*(B/U_s)\to H^*(B)\xrightarrow{\operatorname{tr}}
H^*(G)\xrightarrow{i^*} H^*(A). \]
The first map is induced by the quotient homomorphism, the second is the transfer map, and the third restriction.  We wish to show that this composition is injective, from which the claims in the theorem follow.
The double coset formula expresses the composition $i^*\circ\operatorname{tr}$ as a sum indexed over $A\backslash G/B$.  Since $A$ is elementary abelian, all of the terms vanish except those corresponding to the fixed points of $A$ on $G/B$.  But as $A$ contains regular unipotents, $B$ is the only such fixed point.  Therefore the composition $i^*\circ\operatorname{tr}$ equals the restriction map $H^*(B)\to H^*(A)$.
Hence the above composition is
\[ H^*(B/U_s)\to H^*(B)\to H^*(A), \]
which in turn equals
\[ H^*(B/U_s)\to H^*(U/U_s)\to H^*(U)\to H^*(A). \]
But the first map is injective since $p$ does not divide the index, while the composition of the other two is an isomorphism, as remarked at the start of this proof.  Hence the map in question is injective as desired.
\end{proof}

\begin{cor}\label{nonvanishing}
Let $G$ be a finite group of Lie type over $\F_q$ having Coxeter number at most $p$, which is nontrivial (of positive rank).  Then $H^*(G;\F_p)$ contains $H^*(GL_2\F_q;\F_p)$, as a (graded) submodule over the Steenrod algebra. In particular,
\[ H^{r(2p-3)}(G;\F_p)\neq0. \]
\end{cor}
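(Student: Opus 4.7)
The plan is to deduce this corollary directly by combining Theorem~\ref{injection} with the explicit $GL_2$ computation of section~\ref{gl2}. Pick any Coxeter generator $s\in S$ (the hypothesis that $G$ has positive rank ensures $S\neq\varnothing$). Theorem~\ref{injection} then supplies an injective graded Steenrod-module map
\[ H^*(X_s;\F_p)^T \hookrightarrow H^*(G;\F_p). \]
So everything reduces to identifying $H^*(GL_2\F_q;\F_p)$ as a graded Steenrod submodule of the source.

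For this identification, recall from section~\ref{notationlietype} that $T$ acts $\F_q$-linearly on $X_s\iso\F_q$, hence through the simple root character $\alpha_s\colon T\to\F_q^\times$. Consequently
\[ H^*(X_s;\F_p)^T \;=\; H^*(\F_q;\F_p)^{\operatorname{im}\alpha_s} \;\supseteq\; H^*(\F_q;\F_p)^{\F_q^\times}, \]
and the proposition in section~\ref{gl2} identifies the right-hand side with $H^*(GL_2\F_q;\F_p)$ as graded algebras. Since the Steenrod operations commute with both group homomorphisms (so the invariant containment respects them) and with the restriction isomorphism to $GL_2$-cohomology, this containment is one of graded Steenrod modules. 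Composing with the injection of Theorem~\ref{injection} gives the first statement of the corollary.

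For the nonvanishing in degree $r(2p-3)$, invoke Proposition~\ref{gl2vanishing}, which asserts $\dim H^{r(2p-3)}(GL_2\F_q;\F_p)=1$. The nonzero class there must map to a nonzero element of $H^{r(2p-3)}(G;\F_p)$ by injectivity, finishing the proof.

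Since all the machinery has been built up in the earlier sections, there is no genuine obstacle here: the construction of the injection is the content of Theorem~\ref{injection}, the existence of the invariant class in degree $r(2p-3)$ is the content of Proposition~\ref{gl2vanishing}, and the only thing to verify is the (essentially automatic) compatibility of the $T$-action on $X_s$ with the $\F_q^\times$-action used in the $GL_2$ calculation, which follows from $\F_q$-linearity of the root action.
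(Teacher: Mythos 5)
Your proof is correct and follows essentially the same route as the paper's: pick a Coxeter generator $s$, observe that $T$ acts on $X_s\iso\F_q$ through a character into $\F_q^\times$ so that $H^*(\F_q;\F_p)^{\F_q^\times}\subseteq H^*(X_s;\F_p)^T$, identify the former with $H^*(GL_2\F_q;\F_p)$, compose with the injection from Theorem~\ref{injection}, and invoke Proposition~\ref{gl2vanishing} for nonvanishing. You spell out the containment through $\operatorname{im}\alpha_s$ a bit more explicitly than the paper, but the argument is the same.
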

\begin{proof}
Pick any $s\in S$.  It suffices to express
$H^*(GL_2\F_q;\F_p) = H^*(\F_q;\F_p)^{\F_q^\times}$ as such a submodule of $H^*(X_s;\F_p)^T$.  It is, since $X_s\iso\F_q$ with $T$ acting $\F_q$-linearly.  Recall that $H^{r(2p-3)}(GL_2\F_q;\F_p)$ is nontrivial---Lemma~\ref{gl2vanishing}.
\end{proof}

In one case, namely the simply-connected groups in type $C$, there is a choice of $s$ for which
\[ \im(T\to\aut(X_s))<\F_q^\times \]
is a proper subgroup; that is, $T$ does not act transitively on the nonzero elements of $X_s$.  Here we get a stronger result than that of Corollary~\ref{nonvanishing}.
\begin{cor}\label{sp2n}
Suppose $2n\leq p$.  Then $H^*(Sp(2n,\F_q);\F_p)$ contains
$H^*(SL_2\F_q;\F_p)$, as a (graded) submodule over the Steenrod algebra. In particular,
\[ H^{r(p-2)}(Sp(2n,\F_q);\F_p)\neq0. \]
\end{cor}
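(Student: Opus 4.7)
The plan is to mimic the proof of Corollary~\ref{nonvanishing} by applying Theorem~\ref{injection}, but with a more astute choice of simple root $s$. Since the Coxeter number of $Sp(2n,\F_q)$ is $2n$, the hypothesis $2n\leq p$ places us within the scope of Theorem~\ref{injection}, which for every $s\in S$ supplies an injective graded Steenrod-module map $H^*(X_s;\F_p)^T\hookrightarrow H^*(Sp(2n,\F_q);\F_p)$. The improvement over Corollary~\ref{nonvanishing} will come from picking $s$ so that the invariants $H^*(X_s;\F_p)^T$ are strictly larger than $H^*(GL_2\F_q;\F_p)$.

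The right choice is to take $s$ corresponding to the unique long simple root $\alpha_n$ of the $C_n$ root system. Under the standard identification of the character lattice with $\bbZ^n$, we have $\alpha_n=2e_n$. Since $Sp(2n)$ is simply connected, its maximal torus is $T\iso(\F_q^\times)^n$, and $\alpha_n$ restricts to the character $(\lambda_1,\dots,\lambda_n)\mapsto\lambda_n^2$. Consequently $T$ acts on $X_{\alpha_n}\iso\F_q$ by scalar multiplication through the proper (index $2$) subgroup $2\cdot\F_q^\times\leq\F_q^\times$ of squares.

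It follows that $H^*(X_{\alpha_n};\F_p)^T = H^*(\F_q;\F_p)^{2\cdot\F_q^\times}$, which by the analysis of section~\ref{gl2} -- the very calculation behind Proposition~\ref{sl2vanishing} -- is $H^*(SL_2\F_q;\F_p)$ as a graded module over the Steenrod algebra, and is nonzero in degree $r(p-2)$. Composing with the injection supplied by Theorem~\ref{injection} yields the desired embedding. (In the degenerate case $p=2$, the hypothesis forces $n=1$, in which case $Sp(2,\F_{2^r})=SL_2\F_{2^r}$ and $r(p-2)=0$, so the claim is tautological.)

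The only substantive step beyond invoking Theorem~\ref{injection} is the weight-lattice bookkeeping in the second paragraph, verifying that $T$ acts on the long root subgroup through a proper subgroup of $\F_q^\times$. This is precisely where the \emph{simply-connected} form matters: for the adjoint form $PSp(2n)$, the long root $\alpha_n$ would generate the character lattice of its maximal torus, the induced character would be surjective onto $\F_q^\times$, and the same argument would only recover Corollary~\ref{nonvanishing}.
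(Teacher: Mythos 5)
Your proof is correct and takes essentially the same route as the paper's: apply Theorem~\ref{injection} with $s$ the long simple root of $C_n$, observe that $T$ acts on $X_s$ through the squares $2\cdot\F_q^\times$, and invoke the $SL_2$ computation of Proposition~\ref{sl2vanishing}. The paper states the factoring through squares without proof (remarking it ``may be computed directly; or, see the next section''); you supply the direct weight-lattice computation, which is the intended verification.
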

\begin{proof}
We may assume $p$ is odd.  Let $\alpha_s$ be the long simple root.  Then the action of $T$ on $X_s\iso\F_q$ factors through
$2\cdot\F_q^\times$, the subgroup of squares.  (This may be computed directly; or, see the next section.)  The invariants of $H^*(\F_q;\F_p)$ by this group are $H^*(SL_2\F_q;\F_p)$, and are nonvanishing
%\footnote{In the notation of~\cite[Lem.~A.1]{FP}, the generator is\newline
%$a_{1,2}^0\wedge\cdots\wedge~a_{1,2}^{r-1}\otimes~(b_{1,2}^0)^{(p-3)/2}\cdots(b_{1,2}^{r-1})^{(p-3)/2}$.}
in degree $r(p-2)$: see Proposition~\ref{sl2vanishing}.
\end{proof}

Our main results for this chapter are now complete.
In the next section (independent from the preceding results), we show that the groups $Sp(2n,\F_q)$ are essentially the only case in which the phenomenon of Corollary~\ref{sp2n} occurs; that is, in all other cases the action map $T\to\F_q^\times$ on every root subgroup is surjective.

\section{Root action surjectivity}\label{rootaction}
Quillen~\cite[sec.~11]{QuillenK} remarked that
``for $SL_2$ and $p$ odd...the map $T\to k^\times$ given by the unique root of $SL_2$ has image of index 2.''  We have used the fact that this is true more generally\footnote{Of course, $SL_2=Sp_2$.} for $Sp_{2n}$ and its long roots.  In this section, we explore this observation, showing that $Sp_{2n}$ is the only case in which it occurs.

Let $\bar G$ be a connected reductive group and $F:\bar G\to \bar G$ a $q$-Frobenius map.
Let $\bar T\leq\bar G$ be a maximal torus split over $\F_q$ and
\[ \Phi\subset\chi(\bar T)=\hom(\bar T,\bar\F_q^\times) \]
be its root system.  Let $\bar\alpha\in\Phi$.  Consider the induced map on finite groups
\[ \alpha:T\to\F_q^\times \]
describing the action of $T$ on $X_\alpha$.  In this section, we investigate the question of when the invariants $H^*(X_\alpha;\F_p)^T$ of $T$ acting on a root subgroup can be bigger than the ``universal invariants''
$H^*(\F_q;\F_p)^{\F_q^\times}$,
which would require that $\alpha$ fails to be surjective.

First of all, we consider $\bar T$ in isolation.  Let $\bar T$ be an algebraic torus split over $\F_q$.  The following lemma, which we will check in coordinates, shows when an arbitrary character fails to induce a surjective map on the finite groups.

\begin{lemma}
Let $n|(q-1)$ and $\bar\alpha\in\chi(\bar T)$.
Then \[ n|[\F_q^\times:\im(\alpha)] \]
if and only if $\bar\alpha$ is divisible\footnote{In general, we say that an element $x$ of an abelian group $A$ is divisible by $n\in\bbZ$ if there exists $y\in A$ with $x=n\cdot y$.  We say that $x$ is divisible if it is divisible by some $n>1$.} by $n$ in $\chi(\bar T)$.
\end{lemma}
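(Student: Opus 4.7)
The plan is to reduce everything to an elementary computation in a cyclic group by choosing coordinates on the split torus.

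First I would pick an isomorphism $\bar T \cong (\bar\F_q^\times)^m$ of algebraic groups defined over $\F_q$; this identifies the character group $\chi(\bar T)$ with $\bbZ^m$ and the finite torus $T = \bar T^F$ with $(\F_q^\times)^m$. Under this identification, the character $\bar\alpha$ corresponds to an integer tuple $(a_1,\dots,a_m)$, and the induced homomorphism $\alpha: T \to \F_q^\times$ sends $(t_1,\dots,t_m)$ to $t_1^{a_1}\cdots t_m^{a_m}$.

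Next I would compute the index $[\F_q^\times : \im(\alpha)]$. Since $\F_q^\times$ is cyclic of order $q-1$, the subgroup $\{t^{a_i} : t \in \F_q^\times\}$ has index $\gcd(a_i, q-1)$. The image of $\alpha$ is the product (equivalently, join) of these cyclic subgroups, and in a cyclic group the join of subgroups of indices $d_1,\dots,d_m$ has index $\gcd(d_1,\dots,d_m)$. Therefore
\[ [\F_q^\times : \im(\alpha)] = \gcd(a_1, \dots, a_m, q-1). \]

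Finally I would translate the two divisibility conditions. The character $\bar\alpha = (a_1,\dots,a_m) \in \bbZ^m$ is divisible by $n$ iff $n \mid a_i$ for all $i$, iff $n \mid \gcd(a_1,\dots,a_m)$. Under the standing hypothesis $n \mid (q-1)$, this is equivalent to $n \mid \gcd(a_1,\dots,a_m,q-1)$, which by the previous paragraph is precisely the condition $n \mid [\F_q^\times : \im(\alpha)]$.

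There is no real obstacle here; the only thing to be careful about is the role of the hypothesis $n \mid (q-1)$, which is what lets the $(q-1)$ inside the gcd be absorbed or ignored freely when passing between divisibility of $\bar\alpha$ in $\chi(\bar T)$ and divisibility of the finite index. The entire argument is then a one-line gcd identity once coordinates have been chosen.
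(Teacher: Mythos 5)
Your proof takes the same approach as the paper: coordinatize the $\F_q$-split torus so that $\chi(\bar T)\cong\bbZ^m$, write $\alpha(t_1,\dots,t_m)=t_1^{a_1}\cdots t_m^{a_m}$, and reduce the claim to a divisibility statement in the cyclic group $\F_q^\times\cong\bbZ/(q-1)$. The only cosmetic difference is that you compute the index as $\gcd(a_1,\dots,a_m,q-1)$ explicitly, whereas the paper identifies $\im(\alpha)$ with the subgroup $\langle a_1,\dots,a_m\rangle\leq\bbZ/(q-1)$ and observes it lies in $\langle n\rangle$ iff $n\mid a_i$ for all $i$ --- the same computation phrased differently.
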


\begin{proof}
Pick an (algebraic) isomorphism identifying $T$ with $({\bar\F_q}^\times)^k$.  Then $\bar\alpha:\bar T\to{\bar\F_q}^\times$ takes the form
\[ \bar\alpha(t_1,\dots,t_k)=t_1^{a_1}\cdots t_k^{a_k} \]
for some $a_i\in\bbZ$.  But then it is clear that
\[ \im(\alpha)=\langle a_1\dots,a_k\rangle\leq\bbZ/(q-1)\iso\F_q^\times. \]
In particular $\im(\alpha)$ is contained in $\langle n\rangle<\bbZ/(q-1)$ if and only if $n|a_i$ for all $i$, in which case $\alpha$ is divisible by $n$.
\end{proof}

Now return to the situation where $\bar T$ is a maximal $\F_q$-split torus in a connected reductive group $\bar G$, of which $\bar\alpha$ is a root.  The lemma says that $\alpha$ is surjective unless $\bar\alpha$ is divisible in $\chi(\bar T)$ by some integer dividing $q-1$.  Since no root may be divisible (by any integer greater than one) in the root lattice, this immediately proves:

\begin{cor}
If $\bar G$ has adjoint type (i.e.~$\chi(\bar T)=\bbZ\Phi$), then $\alpha$ is surjective for every root $\bar\alpha$.
\end{cor}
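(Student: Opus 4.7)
The plan is to combine the preceding lemma with a purely root-theoretic primitivity claim: no root in $\Phi$ is divisible in $\bbZ\Phi$ by any integer $n>1$. Given this, the corollary is immediate, because under the hypothesis $\chi(\bar T)=\bbZ\Phi$, the only way the lemma can obstruct surjectivity of $\alpha$ is through divisibility of $\bar\alpha$ in $\bbZ\Phi$, which we will have ruled out.

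To establish the primitivity claim I would first reduce to the irreducible case. Decomposing $\Phi$ into irreducible components gives an orthogonal direct sum decomposition of $\bbZ\Phi$, and primitivity can be checked component by component. Now assume $\Phi$ is irreducible. The simple roots $\{\alpha_s : s\in S\}$ form a $\bbZ$-basis of $\bbZ\Phi$, so any simple root is manifestly primitive. For a general root $\bar\alpha$, use the fact that the Weyl group $W$ acts transitively on the roots of each length in an irreducible root system; in the simply-laced case there is only one length, and in each of the non-simply-laced types $B_n, C_n, F_4, G_2$, the set of simple roots $\Delta$ contains a representative of each length class (immediate from the Dynkin diagram). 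Hence $\bar\alpha = w(\alpha_s)$ for some $w\in W$ and $s\in S$ with $\alpha_s$ of the same length as $\bar\alpha$. Since $W$ permutes $\Phi$, it acts by $\bbZ$-module automorphisms of $\bbZ\Phi$, and such automorphisms preserve the property of being primitive; therefore $\bar\alpha$ is primitive.

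The main (and essentially only) nontrivial input is the transitivity of $W$ on each length class in an irreducible root system, which is classical (e.g.~Bourbaki, \emph{Groupes et alg\`ebres de Lie}, Ch.~VI). I do not anticipate any serious obstacle, as the remaining ingredients---orthogonal decomposition into irreducible components, and that the simple roots form a $\bbZ$-basis of $\bbZ\Phi$---are standard. One mild alternative would be to argue via Cartan integers: if $\bar\alpha = n\gamma$ with $\gamma\in\bbZ\Phi$, then $\langle\gamma,\bar\alpha^\vee\rangle = 2/n$ must be an integer, forcing $n\mid 2$; but this only pins down $n\in\{1,2\}$ and still leaves the $n=2$ case to be excluded by a case check, so the Weyl-orbit argument above is cleaner.
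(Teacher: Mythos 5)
Your proposal is correct and follows essentially the same route as the paper: both reduce the corollary to the preceding lemma together with the fact that no root is divisible (by $n>1$) in $\bbZ\Phi$. The paper simply cites this primitivity fact as standard, whereas you supply a (correct) proof of it via Weyl-group transitivity on roots of each length; either way the logical structure is identical.
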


Furthermore, the lemma shows that surjectivity of $\alpha$ can never fail unless $\bar\alpha$ is divisible in the (larger) weight lattice.  But this weaker condition still never occurs except in one case:

\begin{lemma}
Let $\Phi$ be a (reduced, crystallographic) root system, and $\Lambda\geq\bbZ\Phi$ the associated weight lattice.  Then no root $\bar\alpha\in\Phi$ is divisible in $\Lambda$, unless (an irreducible component of) $\Phi$ has type $C_n$, in which case a long root $\bar\alpha$ is divisible by 2.
\end{lemma}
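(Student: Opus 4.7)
The plan is to reduce to the case of an irreducible root system, and then to a single simple root per length class, and finally to inspect the Cartan matrices of the classical and exceptional types.

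First, the weight lattice of a (possibly reducible) root system $\Phi=\Phi_1\sqcup\cdots\sqcup\Phi_k$ splits as an orthogonal direct sum $\Lambda=\Lambda_1\oplus\cdots\oplus\Lambda_k$, and each root lies in a single component. Divisibility of $\bar\alpha$ in $\Lambda$ is therefore equivalent to divisibility in the component containing it, so we may assume $\Phi$ is irreducible. Next, recall that the fundamental weights $\omega_i$ dual to the simple coroots $\alpha_i^\vee$ form a $\bbZ$-basis of $\Lambda$, so that
\[ \bar\alpha = \sum_i \langle\bar\alpha,\alpha_i^\vee\rangle\,\omega_i \]
for every $\bar\alpha\in\Lambda$; hence $\bar\alpha$ is divisible by $n>1$ in $\Lambda$ if and only if $n$ divides every Cartan integer $\langle\bar\alpha,\alpha_i^\vee\rangle$ as $\alpha_i$ ranges over the simple roots.

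The next reduction uses that the Weyl group $W$ acts on $\Lambda$ by lattice automorphisms (so it preserves divisibility) and acts transitively on each length class of roots. In an irreducible root system there are at most two root lengths, and each length class contains at least one simple root --- visible in every non-simply-laced Dynkin diagram. So it suffices to examine the simple roots. For a simple root $\alpha_j$, the tuple $(\langle\alpha_j,\alpha_i^\vee\rangle)_i$ is precisely the $j$-th column of the Cartan matrix, so the question becomes: does any column of the Cartan matrix of some irreducible root system have GCD greater than $1$?

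The final step is a direct inspection of the Cartan matrices of $A_n, B_n, C_n, D_n, E_6, E_7, E_8, F_4, G_2$. In every type each column has a diagonal entry $2$ together with some off-diagonal entry equal to $-1$ (or, in $G_2$, to $-1$ or $-3$), and hence has GCD $1$ --- except for the column corresponding to the long simple root $\alpha_n$ of $C_n$, which is $(0,\ldots,0,-2,2)^T$ and has GCD exactly $2$. This identifies the unique offending case and shows that a long simple root of $C_n$ is divisible by $2$, and by no larger integer (since $2$ itself appears as a coordinate). By the Weyl-transitivity of length classes, the same holds for every long root in $C_n$. The only conceivable obstacle is the case-check itself, but it is routine once the right reduction is in place, since the entries of each column can be read off directly from the Dynkin diagram and its edge labels.
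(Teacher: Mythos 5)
Your proof is correct and follows essentially the same route as the paper: reduce via the Weyl group action to simple roots, observe that divisibility of $\alpha_s$ by $m$ in $\Lambda$ is equivalent to $m$ dividing all Cartan integers $\langle\alpha_s,\alpha_{s'}^\vee\rangle$, and then inspect the Cartan matrices, finding the long simple root of $C_n$ (with its $(0,\dots,0,-2,2)$ column) as the unique offender. The only cosmetic difference is that you phrase the key step via the fundamental-weight basis while the paper derives it directly from the reflection characterization of $\Lambda$; the computations are identical, and you are slightly more explicit than the paper about why it suffices to check simple roots (Weyl-transitivity on each length class plus the fact that every length class contains a simple root).
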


\begin{proof}
Since the Weyl group acts on $\Lambda$ by automorphisms, it suffices to check the proposition for simple roots $\alpha_s$, $s\in S$.
%But for a (simple) root to be divisible by $m>1$ in $\Lambda$ is equivalent to the Cartan matrix for $\Phi$ containing a column which is divisible by $m$.  
So, suppose that
\[ \alpha_s=m\cdot\beta \]
for some $m>1$ and $\beta\in\Lambda$.  By definition of $\Lambda$, we have
\[ \beta-\sigma_\alpha(\beta)\in\bbZ\alpha \]
for all roots $\alpha$, where $\sigma_\alpha$ is the reflection in $\alpha$.
In particular, for all $s'\in S$, we have
\[ \alpha_s-\sigma_{\alpha_{s'}}(\alpha_s)\in m\bbZ\alpha_{s'}, \]
which is to say that the column\footnote{In Bourbaki, row} corresponding to $s$ in the Cartan matrix is divisible by $m$.  This only occurs in type $C_n$, in the case where $\alpha_s$ is the long simple root and $m=2$.~\cite[Tables I-IX, item (XIII)]{Bourbaki}.
\end{proof}

Since there are only two simple\footnote{In the sense of having irreducible root system} groups of Lie type $C_n$ (the simply-connected one and the adjoint one), the preceding three results show:

\begin{thm}
Let $G$ be a simple group of Lie type over $\F_q$ other than a symplectic group (i.e.~the simple simply-connected group of type $C_n$).  Then the action of $T$ on each root subgroup $X_\alpha$ of $G$ induces a surjective map $T\to\F_q^\times$.
\end{thm}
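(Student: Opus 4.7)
The plan is to assemble the theorem directly from the preceding two lemmas and the corollary, so essentially no new argument is needed beyond a careful case analysis of the character lattice $\chi(\bar T)$. First I would recall that for a simple algebraic group $\bar G$, the character lattice of a maximal torus satisfies
\[ \bbZ\Phi \leq \chi(\bar T) \leq \Lambda, \]
where $\Lambda$ is the weight lattice, with $\chi(\bar T) = \bbZ\Phi$ for the adjoint form and $\chi(\bar T) = \Lambda$ for the simply-connected form.

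Next I would apply the first lemma of the section: surjectivity of $\alpha : T \to \F_q^\times$ can fail only when $\bar\alpha$ is divisible in $\chi(\bar T)$ by some integer $n > 1$ dividing $q-1$. Since $\chi(\bar T) \leq \Lambda$, divisibility in $\chi(\bar T)$ implies divisibility in $\Lambda$. By the second lemma, no root is divisible in $\Lambda$ except that, in type $C_n$, long roots are divisible by $2$. Outside of type $C_n$, this already closes the case: no $\bar\alpha$ is divisible in $\Lambda$, hence none is divisible in $\chi(\bar T)$, hence every $\alpha$ is surjective.

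It remains to handle type $C_n$. The two possibilities for a simple group here are the adjoint and the simply-connected forms. For the adjoint form, the corollary already gives the conclusion directly (since $\chi(\bar T) = \bbZ\Phi$ admits no divisible roots). The simply-connected form is precisely $Sp_{2n}$, which is excluded by hypothesis. Hence in all remaining cases, $\alpha : T \to \F_q^\times$ is surjective for every root $\bar\alpha$, completing the proof.

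I do not expect any real obstacle: the two preceding lemmas do all the work, and the only thing to be careful about is bookkeeping the intermediate lattices $\chi(\bar T)$ between $\bbZ\Phi$ and $\Lambda$ for simple groups, and then checking that the one offending case (long roots in $C_n$ at the simply-connected level) coincides exactly with the excluded family of symplectic groups.
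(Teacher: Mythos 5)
Your proposal is correct and follows exactly the route of the paper: the paper's own proof is the single sentence preceding the theorem, noting that the three prior results (the divisibility lemma relating $\im(\alpha)$ to divisibility of $\bar\alpha$ in $\chi(\bar T)$, the corollary for adjoint type, and the lemma that long roots in $C_n$ are the only roots divisible in the weight lattice) combine with the fact that type $C_n$ admits only two isogeny types. You have simply spelled out the bookkeeping that the paper leaves implicit.
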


\section{Nilpotence of generators}
Unfortunately, the injection
\[ H^*(GL_2\F_{p^r};\F_p)\to H^*(G;\F_p) \]
does not respect cup products.  However, in partial compensation, it does respect the Steenrod operations.  Importantly, this is enough to determine which elements of the image are nilpotent.

Take $p=2$, and recall that the Steenrod algebra action on any cohomology ring $H^*(G;\F_2)$ satisfies
\[ \Sq^i(x)=x^2\mbox{ when }\deg(x)=i. \]
This implies
\[ \Sq^{2^{k-1}i}\Sq^{2^{k-2}i}\cdots\Sq^i(x)=x^{2^k} \]
where $\deg(x)=i$.  In particular, $x$ is nilpotent if and only if
\[ \Sq^{2^{k-1}i}\Sq^{2^{k-2}i}\cdots\Sq^i(x)=0 \]
for some large enough $k$.
It follows that nilpotence\footnote{Here we are using Lemma~\ref{nilpotentcomponents} to reduce to the homogeneous case.} is preserved by any map between two such cohomology rings respecting the Steenrod operations.  If the map is injective, non-nilpotence is also preserved.  The same is true by a similar argument when $p$ is odd.
%(This works more generally for a map of unstable algebras over the Steenrod algebra.)**graded commutative?

So, using the results of section~\ref{gl2nilpotency} (specifically Proposition~\ref{gl2nonnilp}), we get:
\begin{prop}
The injection
\[ \phi:H^*(GL_2\F_{p^r};\F_p)\to H^*(G;\F_p) \]
of Theorem~\ref{injection} preserves nilpotence and nonnilpotence.  In particular:
\begin{enumerate}[(a)]
\item When $p=2$, there are no (nonzero) nilpotent elements in the image.\footnote{This is also clear from the proof of Theorem~\ref{injection}, since these elements are detected on an elementary abelian $2$-subgroup in $G$.}
\item When $p$ is odd, the image $\phi(H^i(GL_2\F_{p^r};\F_p))\subset H^i(G;\F_p)$ contains only nilpotent elements in degrees $r(2p-3)\leq i<r(2p-2)$.
\item There is a nonnilpotent element in $H^{r(2p-2)}(G;\F_p)$.
\end{enumerate}
\end{prop}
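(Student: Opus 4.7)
The plan is to leverage the fact, set up in the paragraphs immediately preceding the statement, that nilpotence of a homogeneous class in $H^*(G;\F_p)$ is detected purely by iterated Steenrod operations, together with the injectivity of $\phi$ and its compatibility with the Steenrod algebra (Theorem~\ref{injection}). The main work is to establish the first sentence of the proposition: once we know $\phi$ preserves nilpotence and nonnilpotence, the three labeled consequences fall out of Propositions~\ref{gl2vanishing} and \ref{gl2nonnilp}.

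First I would reduce the claim to the case of a homogeneous element $x \in H^*(GL_2\F_{p^r};\F_p)$, using Lemma~\ref{nilpotentcomponents}: since $\phi$ is graded, $\phi$ of a homogeneous component is a homogeneous component of $\phi(x)$, so $\phi(x)$ is nilpotent if and only if each $\phi(x_i)$ is. For homogeneous $x$ of degree $i$ in characteristic $2$, the identity $\Sq^{2^{k-1}i}\cdots \Sq^i(x) = x^{2^k}$ shows that $x$ is nilpotent if and only if some iterated $\Sq$ expression vanishes. In odd characteristic, homogeneous classes of odd degree are automatically nilpotent (they square to zero by graded commutativity), and homogeneous classes of even degree $2i$ satisfy the analogous identity $P^{p^{k-1}i}\cdots P^i(x) = x^{p^k}$, so again nilpotence is detected by vanishing of an iterated Steenrod expression. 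Since $\phi$ commutes with these operations and is injective, the iterated Steenrod expression vanishes on $x$ if and only if it vanishes on $\phi(x)$; this gives the preservation of nilpotence and of nonnilpotence.

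With the first sentence in hand, the consequences follow by transport: for (a), in characteristic $2$ the ring $H^*(GL_2\F_{2^r};\F_2)$ embeds into the polynomial ring $H^*(\F_{2^r};\F_2)$ and thus has no nonzero nilpotent elements, so neither does its image. For (b), Proposition~\ref{gl2vanishing} says $H^i(GL_2\F_{p^r};\F_p) = 0$ for $0 < i < r(2p-3)$, while Proposition~\ref{gl2nonnilp} says every homogeneous element of $H^*(GL_2\F_{p^r};\F_p)$ of degree below $r(2p-2)$ is nilpotent; combining these bounds with nilpotence-preservation gives the desired range. For (c), Proposition~\ref{gl2nonnilp} provides a nonnilpotent class in degree $r(2p-2)$ of $H^*(GL_2\F_{p^r};\F_p)$, and its image under $\phi$ remains nonnilpotent, exhibiting the required class in $H^{r(2p-2)}(G;\F_p)$.

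The only potentially subtle step is the reduction to the homogeneous case in odd characteristic, since $\phi$ is not multiplicative: I need the homogeneous-components criterion of Lemma~\ref{nilpotentcomponents} to prevent cross terms from interfering, and the odd-degree case must be handled separately (where there is nothing to prove, as such classes are automatically nilpotent on both sides). Beyond that, the argument is a direct transport of the characterization of nilpotence through an injective, Steenrod-equivariant map, so I do not anticipate any serious obstacle.
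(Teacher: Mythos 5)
Your proof is correct and follows the paper's own argument: both establish the first sentence by characterizing nilpotence of a homogeneous class via iterated Steenrod operations (the identity $\Sq^{2^{k-1}i}\cdots\Sq^i(x)=x^{2^k}$ for $p=2$, and for $p$ odd the analogous $P^{p^{k-1}i}\cdots P^i(x)=x^{p^k}$ on even-degree classes plus automatic nilpotence of odd-degree ones), reduce to the homogeneous case via Lemma~\ref{nilpotentcomponents}, and transport through the injective, Steenrod-equivariant $\phi$; parts (a)--(c) are then read off from Propositions~\ref{gl2vanishing} and~\ref{gl2nonnilp} exactly as the paper does. One tiny point you inherit from the paper: for part (c) when $p=2$, Proposition~\ref{gl2nonnilp} is stated only for odd $p$, so you should instead note that $H^*(GL_2\F_{2^r};\F_2)$ has no nonzero nilpotents and hence the square of the degree-$r$ generator is a nonnilpotent class in degree $2r=r(2p-2)$ whose image under $\phi$ is nonnilpotent.
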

%For $G=GL_n\F_p$, we shall see in chapter~\ref{dimensionch} that $\phi$ is surjective in (lowest nonzero) degree $2p-3$.  Hence we may conclude that all elements in $H^{2p-3}(GL_n\F_p;\F_p)$ are nilpotent.

\section{Chern class interpretation}\label{chernclass}
In this section, take $r=1$ as well as $n\leq p$.  We give an alternate construction of the nonzero class in $H^{2p-3}(GL_n\F_p;\F_p)$, the case $G=GL_n\F_p$ of Corollary~\ref{nonvanishing}.

In his thesis, Barbu~\cite{Barbu} constructed a nonzero class in $H^{2p-2}(GL_n\F_p;\F_p)$, by building it inductively on the Sylow $p$-subgroup and checking stability.  He conjectured that it is the Bockstein of a class in degree $2p-3$.  We begin by supplying an alternate construction of Barbu's degree $2p-2$ class as a Chern class of a natural permutation representation.

Consider the action of $GL_n\F_p$ on the finite set $X=\F_p^n$.  We claim that its Chern class
\[ c_{p-1}(X)\in H^{2p-2}(GL_n\F_p;\F_p) \]
is nonzero.  We will check it, as Barbu did, by restricting to the
cyclic subgroup of order $p$ generated by
\[ g=\begin{bmatrix} 1&1&&\cdots&0\\&1&1&&\vdots\\&&\ddots&\ddots&\\&&&1&1\\&&&&1\end{bmatrix} \in GL_n\F_p. \]
Since $g$ acts on $X$ with $p$ fixed points and $(p^n-p)/p=p^{n-1}-1$ free orbits, we have
\[ c(X)|_{\langle g\rangle}=(1-u^{p-1})^{p^{n-1}-1}=1+u^{p-1}+\cdots, \]
showing that $c_{p-1}(X)\in H^{2p-2}(GL_n\F_p;\F_p)$ is nonzero, and in fact nonnilpotent.  (Here $\langle u\rangle=H^2(\langle g\rangle;\F_p)$.)

The benefit of this construction (besides its concision) is that it gives the nonzero class in degree $2p-2$ conveniently as the image of the universal Chern class $c_{p-1}$ under the homomorphism\footnote{This does not contradict the stable vanishing of the cohomology of $GL_n\F_p$, because the stabilization maps $GL_n\F_p\to GL_{n+1}\F_p$ are not compatible with those of the symmetric groups.}
\[ H^*(S_\infty;\F_p)\to H^*(GL_n\F_p;\F_p) \]
induced by the action of $GL_n\F_p$ on $X$.  So we may infer its properties from those of $H^*(S_\infty;\F_p)$, which is fairly well understood~\cite{Nakaoka,Nakaoka2,Mann}.
In particular, $c_{p-1}(X)$ must be the Bockstein of a (nonzero) element in $H^{2p-3}(GL_n\F_p;\F_p)$ because $c_{p-1}$ is the Bockstein of an element in $H^{2p-3}(S_\infty;\F_p)$.
To quickly see this last statement, observe that
%for all $k$, $H^*(S_k;\F_p)$ is a retract of $H^*(S_\infty;\F_p)$, over the Steenrod algebra [??].  But
\[ H^*(S_p;\F_p)=H^*(\F_p;\F_p)^{\F_p^\times} \]
is the subalgebra of
\[ H^*(\F_p;\F_p)=\F_p\langle x\rangle\otimes\F_p[y] \]
(where $y=\beta x$)
generated by $xy^{p-2}$ and
\[ \beta(xy^{p-2})=y^{p-1}=-c_{p-1}. \]
On the other hand, 
inclusion $S_p\to S_\infty$ induces an isomorphism on cohomology below degree $4p-6$ (see Lemma~\ref{Sinfty}).

Now, we have constructed nonzero classes on $GL_n\F_p$ in degrees $2p-2$ and $2p-3$, connected by a Bockstein, explicitly verifying Barbu's conjecture.

\begin{lemma}\label{Sinfty}
Let $p$ be a prime.
Restriction \[ H^i(S_\infty;\F_p)\to H^i(S_p;\F_p) \]
is an isomorphism for $i<4p-6$.
\end{lemma}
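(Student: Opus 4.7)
The target side is known: from Section~\ref{gl2}, for $p$ odd, $H^*(S_p;\F_p) = H^*(\F_p;\F_p)^{\F_p^\times}$ is generated by $e = xy^{p-2}$ (in degree $2p-3$) and $c = y^{p-1}$ (in degree $2p-2$), with $\beta e = c$.  Hence in the range $i < 4p-6 = 2(2p-3)$, $H^i(S_p;\F_p)$ is one-dimensional precisely in degrees $0$, $2p-3$, $2p-2$ and zero otherwise.  (The case $p=2$, where the bound reads $i < 2$, is trivial since $H^1(S_\infty;\F_2) = \F_2 = H^1(S_2;\F_2)$, so I assume $p$ odd henceforth.)

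The plan is to invoke the Nakaoka-Dyer-Lashof structure theorem to compute $H^*(S_\infty;\F_p)$ in low degrees: it is a free graded-commutative algebra on (duals of) iterated admissible Dyer-Lashof operations $Q^I[1]$ applied to the fundamental class $[1] \in H_0$.  For $p$ odd these sit in degrees $2s(p-1)$ and $2s(p-1)-1$, so the lowest two generators are $\beta Q^1[1]$ in degree $2p-3$ and $Q^1[1]$ in degree $2p-2$, and the next pair appears only in degrees $4p-5$ and $4p-4$.  Therefore in the range $i < 4p-6$, the only products available are $1$, $\beta Q^1[1]$, and $Q^1[1]$ themselves, so $H^*(S_\infty;\F_p)$ has the same Poincar\'e series as $H^*(S_p;\F_p)$ in this range.

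To upgrade this matching of Poincar\'e series to an honest isomorphism via restriction, identify the degree-$(2p-2)$ generator with $c_{p-1}$ of the natural permutation representation of $S_\infty$ on a countable set (compare Section~\ref{chernclass}); its restriction to $S_p$ is $c_{p-1}$ of the defining $p$-dimensional permutation representation, which equals $-c$ and is nonzero.  By naturality of the Bockstein, the degree-$(2p-3)$ generator restricts to a Bockstein preimage of $-c$, which must be $-e$ up to scalar.  Hence restriction carries a basis to a basis in each of the three relevant degrees, and both sides vanish in the other degrees below $4p-6$, giving the isomorphism.

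The main obstacle is the appeal to the Dyer-Lashof-Nakaoka structure theorem itself, which is substantial classical content.  A more elementary but lengthier alternative is available: combine Sylow/fusion analysis of $S_n$ for $p \le n < p^2$ (where the $p$-Sylow is elementary abelian $(C_p)^k$ with $k = \lfloor n/p \rfloor$, and Burnside's fusion theorem gives $H^*(S_n;\F_p) \iso (H^*(S_p;\F_p)^{\otimes k})^{S_k}$; in total degree $< 4p-6 = 2(2p-3)$ only one tensor factor can be non-unital, so the $S_k$-invariants collapse to a single orbit-sum per basis element) with Nakaoka's homology stability $H^i(S_{n+1};\F_p) \xrightarrow{\sim} H^i(S_n;\F_p)$ for $n \ge 2i$ (which handles $n \ge p^2$ outright for $p \ge 7$, and for $p \in \{3,5\}$ is completed by one more layer of wreath-product bookkeeping).
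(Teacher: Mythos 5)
Your argument is correct, but it follows a genuinely different route from the paper's. The paper proves the lemma by a detection argument: Nakaoka's theorem that inclusions of symmetric groups induce surjections on cohomology reduces the claim to showing that $H^i(S_\infty;\F_p)$ is detected on $C_p\leq S_p$ for $i<4p-6$; this is obtained by combining Mann's detection of $H^*(S_n;\F_p)$ on $p$-tori (products of regular representations of $C_p^k$) with the invariant-theoretic vanishing bound of Proposition~\ref{gl2vanishing} to discard all $p$-tori other than a single $C_p$ in this range. You instead invoke the Dyer--Lashof--Nakaoka structure theorem to compute the full Poincar\'e series of $H^*(S_\infty;\F_p)$ in low degrees, observe it agrees with that of $H^*(S_p;\F_p)=H^*(\F_p;\F_p)^{\F_p^\times}$, and then show restriction is nonzero on generators via a Chern-class and Bockstein computation. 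Both are valid; the paper's route stays closer to the transfer/detection toolkit it has already developed (and avoids the Dyer--Lashof machinery, which is heavier than anything else used in the paper), while your route gives stronger output (the full additive structure of $H^*(S_\infty;\F_p)$ in the range, not just a detection statement). Two small streamlinings of your argument: (i) the "countable permutation representation of $S_\infty$" should be read as the compatible system of classes $c_{p-1}$ of the $n$-dimensional permutation representations of $S_n$ as $n\to\infty$, which do stabilize; (ii) once the Poincar\'e series match, you could bypass the Chern-class step entirely by citing Nakaoka's surjectivity of restriction $H^*(S_\infty;\F_p)\to H^*(S_p;\F_p)$, which the paper itself cites from~\cite[Thm.~5.8]{Nakaoka2} --- a surjection between spaces of equal finite dimension is automatically an isomorphism. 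Your sketched alternative via Burnside fusion for $p\le n<p^2$ plus Nakaoka homology stability is also sound (with the acknowledged extra wreath-product bookkeeping for $p\in\{3,5\}$ where the two ranges don't quite overlap) and is closest in spirit to, though still distinct from, the paper's detection-based proof.
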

\begin{proof}
The inclusions between symmetric groups always induce surjections on cohomology~\cite[Thm.~5.8]{Nakaoka2}, so it suffices to show that $H^i(S_\infty;\F_p)$ is detected on $C_p\leq S_p$ in the desired range.  
Now, the cohomology of symmetric groups is detected on their $p$-tori~\cite[Lem.~2.7]{Mann}, which are products of regular representations of $C_p^k$ (for various $k$).
But the regular representation of $C_p^k$ is normalized by $GL_k\F_p$, so, if $k>1$,
\begin{align*}
\im(H^i(S_\infty;\F_p)\to H^i(C_p^k;\F_p))&\leq H^i(C_p^k;\F_p)^{GL_k\F_p} \\
&\leq H^i(\F_{p^k};\F_p)^{\F_{p^k}^\times},
\end{align*}
the latter vanishing (Proposition~\ref{gl2vanishing}) below degree $k(2p-3)\geq 4p-6$.  Hence we need only consider products of the regular representation of $C_p$.  The lowest nonzero element of $H^*(C_p;\F_p)^{\F_p^\times}$ is in degree $2p-3$, so the cohomology of $S_\infty$ is detected on (conjugates of) a single subgroup $C_p$ below degree $2(2p-3)=4p-6$, as desired.
\end{proof}

\begin{rem}
The construction of this section shows that (for $n\leq p$) the map
\[ BGL_n\F_p^+\to BS_\infty^+ \]
induced by the natural action on $\F_p^n$
is nonzero on $H^{2p-3}(-;\F_p)$.
So it is not merely a coincidence that the lowest mod-$p$ (co)homology of $GL_n\F_p$ coincides with the lowest nontrivial $p$-torsion in the stable homotopy groups of spheres (which are the homotopy groups of $BS_\infty^+$ by the Barratt-Priddy-Quillen Theorem~\cite{BPQ}).

%Notice that the first nontrivial mod-$p$ cohomology of $GL_n\F_p$ occurs in the same degree ($2p-3$) as the first $p$-torsion in the stable homotopy groups of spheres.  This Chern class interpretation also allows us to gives a topological reason.  Specifically, the Barratt-Priddy-Quillen Theorem [] states that $(BS_\infty)^+$ is weakly equivalent to $(\Omega^\infty S^\infty)_0$.  In particular,
%\[ \pi_i(BS_\infty)^+\iso\pi_i^s, \]
%the $i$th stable homotopy group of the spheres (for $i>0$).
%Then, using the mod $p$ Hurewicz isomorphism~[check/fix this!], the first $i$ such that $\pi_i^s\otimes\bbZ/p\neq0$ agrees with the first $i$ such that $H_i(S_\infty;\bbZ/p)\neq0$.  In fact~\cite{Toda,**}, this $i$ is $2p-3$ (and both groups are $\bbZ/p$).
\end{rem}

\begin{rem}
When $r>1$, the technique of this section does not produce a nonzero Chern class of $GL_n\F_{p^r}$ in low degree.
The Chern classes around degree $r(2p-2)$ of the natural permutation representation of $GL_n\F_{p^r}$ are zero when restricted to an elementary abelian subgroup $A$ of rank $r$ whose nontrivial elements are regular unipotent (as in Corollary~\ref{ptorus}).  This is because such an $A$ acts on $\F_{p^r}^n$ with one fixed line, and freely on the remaining vectors.  That is, the natural permutation representation restricts to $A$ as a sum of fixed points and copies of the regular representation.  But the latter has no nonzero Chern classes until degree $2(p^r-p^{r-1})$.

This Chern class construction nevertheless served as the intuitive basis for the transfer construction of Theorem~\ref{injection}.
\end{rem}

Finally, we remark that this construction of the class in degree $2p-3$ indeed agrees with the one given in Corollary~\ref{nonvanishing}, up to a nonzero scalar multiple.  Likely we could check this directly, but that will be unnecessary in light of Theorem~\ref{onedimensional}, which shows that the degree $2p-3$ cohomology is in fact one-dimensional.

\section{Exponent of $U$}
We will show:
\begin{prop}\label{exponent}
In a finite group of Lie type over $\F_{p^r}$ with Coxeter number at most $p$, the Sylow $p$-subgroup $U$ has exponent $p$.
\end{prop}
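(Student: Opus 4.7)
The plan is to establish two facts: that $U$ has nilpotency class at most $h - 1 \leq p - 1$, and that any nilpotent group of class less than $p$ generated by elements of order $p$ has exponent $p$.

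For the nilpotency bound, I would define the height filtration $U^{(i)} = \langle X_\alpha \mid \alpha \in \Phi^+,\ \hgt(\alpha) \geq i\rangle$. The Chevalley commutator relations recalled in Section~\ref{notationlietype} give $[U^{(i)}, U^{(j)}] \leq U^{(i+j)}$ directly. Since the maximum height of a positive root is $h - 1$, we have $U^{(h)} = 1$; combined with $U = U^{(1)}$, this forces $\gamma_k(U) \leq U^{(k)}$ for all $k$, so $\gamma_h(U) = 1$ and the nilpotency class of $U$ is at most $h - 1 \leq p - 1$.

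To deduce exponent $p$, I would apply the Hall-Petresco collection formula
\[ (xy)^p = x^p y^p \prod_{i=2}^{p} c_i^{\binom{p}{i}}, \]
valid in any group, with $c_i \in \gamma_i(\langle x, y\rangle)$. When the ambient group has class at most $p - 1$, the top factor $c_p$ is trivial, and for $2 \leq i \leq p-1$ the exponent $\binom{p}{i}$ is divisible by $p$. Invoking the commutator identity $[x^n, y] \equiv [x, y]^n \pmod{\gamma_3(\langle x, y\rangle)}$ and its iterated analogues, a downward induction on the lower central series shows each $\gamma_i(U)$ has exponent $p$: at the top, $\gamma_{p-1}(U)$ is abelian and the $p$-th power of any basic commutator generating it lies in $\gamma_p(U) = 1$; at each lower level, the inductive exponent bound on $\gamma_{i+1}(U)$ combines with the Hall-Petresco expression to handle $\gamma_i(U)$. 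Writing an arbitrary $u \in U$ as a product of root elements (each of order $p$) and applying Hall-Petresco inductively on the length of such expressions then gives $u^p = 1$.

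The main obstacle is orchestrating the induction in the second step so that the Hall-Petresco correction terms $c_i^{\binom{p}{i}}$ are killed in the right order without circularity; specifically, one needs exponent $p$ on $\gamma_i(U)$ before it is known on all of $U$, and this is extracted from the nilpotency bound by peeling off the lower central series from the top. Conceptually, the entire argument is a special case of Philip Hall's theorem that every $p$-group of nilpotency class less than $p$ is \emph{regular}; in a regular $p$-group generated by elements of order $p$, the exponent is immediately $p$. Equivalently, under the Lazard correspondence, the $p$-th power operation on $U$ corresponds to multiplication by $p$ in the associated Lie ring over $\F_p$, which vanishes.
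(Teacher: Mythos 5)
Your overall strategy coincides with the paper's: bound the nilpotency class by $h-1\le p-1$ via the height filtration and the Chevalley commutator relations, then invoke Hall's collection theory for $p$-groups of class less than $p$. The first half is identical. For the second half, however, the explicit induction you sketch has a circularity. You propose a \emph{downward} induction on the lower central series, starting from the claim that ``$\gamma_{p-1}(U)$ is abelian and the $p$-th power of any basic commutator generating it lies in $\gamma_p(U)=1$.'' Abelianness is fine (it is central), but the second clause is exactly the kind of statement the theorem is meant to establish, and it does not follow from anything yet in hand: a commutator of weight $p-1$ need not \emph{a priori} have order $p$, and there is no structural reason the terms $\gamma_i(U)$ are generated by elements of order $p$. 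The induction cannot get off the ground at the base case without already knowing some form of the conclusion.

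The paper sidesteps this by inducting on the \emph{length} of the height filtration $1 = U_{\hgt\ge h} \le \cdots \le U_{\hgt\ge 1} = U$ rather than peeling the lower central series from the top. The crucial structural fact, which the lower central series does not obviously share, is that each $U_{\hgt\ge k}$ is generated by root subgroup elements, hence by elements of order $p$. Then Hall's formula $(a_1\cdots a_r)^p = a_1^p\cdots a_r^p\,S_1^p\cdots S_\ell^p$ with $S_i\in[P,P]$ puts the correction terms into $U_{\hgt\ge 2}$, which has a shorter central series of the same kind, and the induction closes cleanly. Your concluding remark---that a $p$-group of class $<p$ is regular, and a regular $p$-group generated by elements of order $p$ has exponent $p$---is correct and would legitimately finish the proof if cited directly; but it replaces the explicit middle argument rather than repairing it. If you want a self-contained proof, replace the downward induction on $\gamma_i(U)$ with the paper's induction on the height filtration, exploiting that those filtrands are generated by order-$p$ elements.
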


Let $h\leq p$ be the Coxeter number.  Since $U$ is generated by the positive root subgroups, their height function yields a filtration of $U$, with length $h-1$:
\[ U_{\hgt\geq k} = \langle X_\alpha \mid \alpha\in\Phi^+, \hgt\alpha\geq k\rangle. \]
Since the root subgroups $X_\alpha$ are abelian and satisfy the Chevalley commutator formula
\[ [X_\alpha,X_\beta]\leq\langle X_{i\alpha+j\beta} \mid i,j>0 \rangle \]
for all $\alpha,\beta\in\Phi$ with $\alpha\neq\pm\beta$,
%(Note that $X_\gamma$ should be interpreted as trivial when $\gamma\notin\Phi$.)
it follows that the above filtration of $U$ is a central series.  In particular, the nilpotence class of $U$ is at most $h-1$.

Using this central series, we show that, when the Coxeter number is at most $p$, every nontrivial element of $U$ has order $p$.  This follows from Hall's ``commutator collection'' trick~\cite[Cor.~12.3.1]{Hall}:

\begin{prop}[Hall]
Let $P$ be a $p$-group of nilpotence class less than $p$.  Then for $a_i\in P$, we have
\[ (a_1\cdots a_r)^p = a_1^pa_2^p\cdots a_r^pS_1^p\cdots S_\ell^p, \]
where $S_i\in[P,P]$.
\end{prop}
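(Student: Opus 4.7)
The plan is to establish the claim by Hall's collection process, reducing first to the two-variable case. The central intermediate result is the Hall--Petresco identity: in any group of nilpotence class less than $p$,
\[ (ab)^n = a^n b^n\, c_2(a,b)^{\binom{n}{2}}\, c_3(a,b)^{\binom{n}{3}} \cdots c_{p-1}(a,b)^{\binom{n}{p-1}}, \]
where each $c_k(a,b)$ lies in the $k$-th term $\gamma_k$ of the lower central series and is independent of $n$. I would prove this two-variable identity by induction on $n$, via the collection procedure: write $(ab)^{n+1}=(ab)^n\cdot ab$, then push the trailing $a$ leftward past each commutator factor using the standard identities $[xy,z]=[x,z]^y[y,z]$ and $[x,yz]=[x,z][x,y]^z$. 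The new commutators produced have strictly higher weight, and the hypothesis that $P$ has class less than $p$ guarantees that iterated commutators of weight $\geq p$ vanish, so the process terminates. Matching the coefficients that appear under this shift with $\binom{n+1}{k}=\binom{n}{k}+\binom{n}{k-1}$ closes the induction.

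Specializing to $n=p$, the arithmetic input is $p \mid \binom{p}{k}$ for $1\leq k\leq p-1$ (immediate from $k\binom{p}{k}=p\binom{p-1}{k-1}$). Hence each middle factor $c_k(a,b)^{\binom{p}{k}}$ can be rewritten as $\bigl(c_k(a,b)^{\binom{p}{k}/p}\bigr)^p$, which is a $p$-th power of an element of $\gamma_k \leq [P,P]$. This proves the $r=2$ case: $(ab)^p = a^p b^p S_1^p \cdots S_{p-2}^p$ with each $S_j \in [P,P]$.

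The general case follows by induction on $r$. Write $a_1\cdots a_r = a_1\cdot(a_2\cdots a_r)$ and apply the two-variable case to get
\[ (a_1\cdots a_r)^p = a_1^p\, (a_2\cdots a_r)^p\, S_1^p\cdots S_{p-2}^p, \]
then invoke the inductive hypothesis on $(a_2\cdots a_r)^p$ to expand it as $a_2^p\cdots a_r^p$ times a product of $p$-th powers of commutators. Combining these yields the desired factorization, after observing that interleaving $p$-th powers of commutators with $p$-th powers of $a_i$'s produces only further $p$-th powers of commutators (a sub-induction on the nilpotence class, since $[a_i^p, S^p]$ sits one step deeper in the lower central series than $[a_i,S]$, and repeated collection terminates before reaching $\gamma_p = 1$).

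The principal obstacle is the bookkeeping required to prove the Hall--Petresco identity itself; the collection process produces many intermediate commutators, and verifying that their exponents are exactly the binomial coefficients $\binom{n}{k}$ requires careful use of commutator calculus identities together with the fact that factors from deeper in the lower central series commute with everything modulo even deeper terms. Once the two-variable identity is in hand, the rest is a routine induction exploiting the divisibility $p \mid \binom{p}{k}$ for $1 \leq k \leq p-1$.
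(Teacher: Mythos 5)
The paper does not prove this proposition at all: it cites it directly from Hall's textbook~\cite[Cor.~12.3.1]{Hall}. Your proposal reconstructs essentially the standard argument (Hall's own collection-process proof), and the main line is correct: the Hall--Petresco identity gives, in any group of class less than $p$, a factorization $(ab)^p = a^p b^p\, c_2^{\pm\binom{p}{2}}\cdots c_{p-1}^{\pm\binom{p}{p-1}}$ with $c_k\in\gamma_k\leq[P,P]$; the divisibility $p\mid\binom{p}{k}$ for $1\leq k\leq p-1$ turns each correction factor into a $p$-th power of an element of $[P,P]$; and induction on $r$ (peeling off $a_1$ from $a_2\cdots a_r$) finishes.

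One small simplification you could make: the closing ``interleaving'' sub-induction is unnecessary. After applying the two-variable case with $a=a_1$ and $b=a_2\cdots a_r$ to get $(a_1\cdots a_r)^p=a_1^p\,(a_2\cdots a_r)^p\,S_1^p\cdots S_{p-2}^p$, and then substituting the inductive hypothesis $(a_2\cdots a_r)^p=a_2^p\cdots a_r^p\,U_1^p\cdots U_m^p$ in place, the result is already in the required form $a_1^p a_2^p\cdots a_r^p\,U_1^p\cdots U_m^p\,S_1^p\cdots S_{p-2}^p$ with all the commutator $p$-th powers trailing at the end. No reordering of commutator powers past the $a_i^p$ factors is ever required, so the appeal to $[a_i^p,S^p]$ lying deeper in the lower central series can simply be dropped.
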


\begin{cor}\label{expcor}
Let $P$ have a central series $1=P_0\leq\cdots\leq P_k=P$ of length $k<p$.  Suppose each $P_i$ is generated by elements of order $p$.  Then $P$ has exponent $p$.
\end{cor}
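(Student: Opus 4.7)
The plan is to induct on the length $k$ of the central series, using Hall's commutator collection formula (the preceding proposition) as the main tool in the inductive step. The hypothesis $k<p$ is precisely what is needed in order to apply Hall's formula, since a central series of length $k$ bounds the nilpotence class of $P$ by $k$, and Hall requires nilpotence class strictly less than $p$.

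For the base case $k\leq 1$, the central series condition $[P,P_1]\leq P_0=1$ forces $P=P_1$ to be abelian; since $P$ is generated by elements of order $p$ by hypothesis, $P$ has exponent $p$.

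For the inductive step, I would first observe that the truncated series $1=P_0\leq P_1\leq\cdots\leq P_{k-1}$ is again central (this is inherited from the given series), has length $k-1<p$, and each of its terms is generated by elements of order $p$ by hypothesis. The induction hypothesis therefore applies and gives that $P_{k-1}$ has exponent $p$. Because the original series is central, $[P,P]=[P,P_k]\leq P_{k-1}$, so $[P,P]$ also has exponent $p$. Now given any $x\in P$, express $x=a_1\cdots a_r$ as a product of generators of order $p$ (available since $P=P_k$ is generated by such elements by hypothesis); Hall's formula then gives
\[ x^p = a_1^p\cdots a_r^p\, S_1^p\cdots S_\ell^p \]
with each $S_i\in[P,P]$. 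Each $a_i^p=1$ by construction and each $S_i^p=1$ since $[P,P]$ has exponent $p$, so $x^p=1$.

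The only step requiring attention is verifying that the nilpotence class of $P$ is bounded by $k$ so that Hall's formula is available; this follows by a short induction on $k$ from the central series condition $[P,P_{i+1}]\leq P_i$, and is really the entire content of why the hypothesis $k<p$ is the right one. Beyond that, everything is routine.
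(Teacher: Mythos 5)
Your proof is correct and takes essentially the same approach as the paper: induct on the length $k$ of the central series, write an arbitrary element of $P$ as a product of order-$p$ generators, apply Hall's formula, and observe that $[P,P]\leq P_{k-1}$ so the induction hypothesis kills the commutator terms. Your version just spells out a couple of steps the paper leaves implicit (the base case and the nilpotence-class bound needed to invoke Hall).
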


\begin{proof}
We use induction on $k$.  All elements of $P$ can be written as a product $a_1\cdots a_r$ with $a_i^p=1$.  Applying the proposition, we see that it suffices to show that $S^p=1$ for all $S\in[P,P]\leq P_{k-1}$.  This is true because $P_{k-1}$ satisfies the induction hypothesis.
\end{proof}

\begin{proof}[Proof of Proposition~\ref{exponent}]
Each level of the height filtration for $U$ is generated by root subgroups, each of which has exponent $p$.  Also the maximum height is $h-1\leq p-1$.  Hence the conditions of Corollary~\ref{expcor} are satisfied, showing that $U$ has exponent $p$.
\end{proof}
% consider generalizing to higher cox no?

\chapter{On $H^{\lowercase{r(2p-3)}}(GL_{\lowercase{n}}\F_{\lowercase{p^r}};\F_{\lowercase{p}})$}\label{dimensionch}
For this chapter, we focus completely on the finite general linear groups $GL_n\F_{p^r}$.  

Friedlander-Parshall~\cite{FP} showed that $H^i(GL_n\F_{p^r};\F_p)=0$ for all $0<i<r(2p-3)$.
In the previous chapter (Corollary~\ref{nonvanishing}), we produced a nonzero element of cohomology in the lowest potentially nonzero degree $r(2p-3)$, showing
\[ \dim H^{r(2p-3)}(GL_n\F_{p^r};\F_p)\geq1, \]
for $2\leq n\leq p$.  In this chapter, building on ideas used by Quillen~\cite[sec.~11]{QuillenK}, Cline-Parshall-Scott-van der Kallen~\cite[sec.~5]{CPSvdK}, and Friendlander-Parshall~\cite[sec.~7]{FP}, we attempt to provide a reverse inequality, both showing that the above cohomology group is one-dimensional, and also that the condition $n\leq p$ is sharp.  We are able to do this when $r=1$ or when $p=2$:

\begin{thm}\label{lowestGL}
Assume either that $r=1$ or that $p=2$.  Then
\begin{align*}
\dim H^{r(2p-3)}(GL_n\F_{p^r};\F_p)=
\begin{cases}
1, & 2\leq n\leq p, \\
0, & n>p.
\end{cases}
\end{align*}
\end{thm}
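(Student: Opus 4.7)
The plan is to complement the lower bound of Corollary~\ref{nonvanishing} with a matching upper bound, obtained by analyzing the cohomology of the Sylow $p$-subgroup $U_n\leq GL_n\F_{p^r}$ through its lower central series filtration. First I would reduce to a computation on $U_n$: since $U_n\triangleleft B_n$ is the normal Sylow, consequence~\ref{normalsylow} gives $H^*(B_n;\F_p)=H^*(U_n;\F_p)^T$, and since $B_n$ contains a Sylow of $GL_n\F_{p^r}$, restriction
\[ H^*(GL_n\F_{p^r};\F_p)\hookrightarrow H^*(U_n;\F_p)^T \]
is injective. Its image is identified, by Cartan--Eilenberg stable elements together with the Bruhat decomposition $GL_n\F_{p^r}=B_n W B_n$, as the subspace of $T$-invariants stable under the Weyl group $W=S_n$.

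Next I would exploit the lower central series of $U_n$, whose associated graded is the elementary abelian group $\gr U_n\iso\bigoplus_{1\leq i<j\leq n}X_{ij}$, with $X_{ij}\iso\F_{p^r}$ of $T$-weight $e_i-e_j$. A $T$-equivariant spectral sequence converges from $H^*(\gr U_n;\F_p)$ to $H^*(U_n;\F_p)$; since $p\nmid|T|$, passing to $T$-invariants is exact. Using Proposition~\ref{vectorspacecoh} and Lemma~\ref{Quillenlemma}, I expect to show that in the critical degree $r(2p-3)$ the $T$-invariants of $H^*(\gr U_n;\F_p)$ are spanned by exactly $\binom{n}{2}$ monomials, one for each root $(i,j)$, each supported on a single $X_{ij}$ and restricting there to the canonical nonzero class of Proposition~\ref{gl2vanishing}. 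The count follows from an accounting argument extending the proof of Proposition~\ref{gl2vanishing}: any monomial whose weight contributes nontrivially to two or more coordinates forces each occupied coordinate to cost at least $r(p-1)$ in the degree, exceeding the total budget $r(2p-3)$.

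Having isolated these $\binom{n}{2}$ candidates, the task is to show that they collapse to a single class (for $2\leq n\leq p$) or to none (for $n>p$) after passing through the spectral sequence and $W$-stability. The Chevalley commutator relations $[X_{ij},X_{jk}]\subseteq X_{ik}$ furnish the spectral sequence differentials relating candidates on adjacent roots, while $W$ permutes the candidates transitively across all roots. For $2\leq n\leq p$ this leaves precisely the symmetrized class furnished by the Chapter~\ref{lietypech} construction. For $n>p$ the Coxeter number exceeds $p$, Proposition~\ref{exponent} no longer applies, and the resulting higher iterated commutators produce new differentials (or Bockstein-type obstructions acting through the $X_{1,n}$ candidate) which annihilate the entire $W$-orbit.

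The main obstacle is controlling the spectral sequence differentials and the $W$-stability simultaneously and in sufficient precision in degree $r(2p-3)$. The hypotheses $r=1$ or $p=2$ make this manageable. When $p=2$ there are no exterior generators, $H^*(\gr U_n;\F_2)$ is a polynomial ring, and the argument is further aided by Lemma~\ref{noperfectsquares}, which pins down the surviving degree-$r$ class modulo squares. When $r=1$ the Frobenius index $k$ disappears entirely and the invariance congruences reduce to elementary divisibility modulo $p-1$. In the remaining cases ($r>1$ and odd $p$, with $n\leq p$) completing the argument would require a detailed understanding of $H^*(H_n;\F_p)$ for the hook subgroup $H_n\leq U_n$, which is what governs the interaction between the $X_{1,j}$- and $X_{i,n}$-supported candidates across distinct Frobenius indices.
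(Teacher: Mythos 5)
Your proposal has the right skeleton --- reduce to the Borel, filter the unipotent radical by its lower central series, and use Proposition~\ref{vectorspacecoh} and Lemma~\ref{Quillenlemma} to control the $T$-invariant monomials --- but the central combinatorial claim is wrong for odd $p$, and the collapse mechanism is not the right one. You assert that the $T$-invariants of $H^{r(2p-3)}(\gr U_n)$ are spanned by $\binom{n}{2}$ monomials, each supported on a single root $X_{ij}$, because ``any monomial whose weight contributes nontrivially to two or more coordinates forces each occupied coordinate to cost at least $r(p-1)$.'' That accounting is sound for $p=2$, where every generator of $H^*(\gr U_n)$ is a degree-one polynomial generator, but for odd $p$ it fails: the exterior generators have degree one rather than two, so mixed monomials such as $\prod_{1<i<n}(x_{1i}x_{in})\cdot x_{1n}y_{1n}^{p-n}$ (for $r=1$) achieve the critical degree $2p-3$ while visiting many root positions. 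In the degree estimate of Lemma~\ref{grhook}, the double count at position $(1,n)$ is offset only by the bound $e_{1n}\le r$, producing the budget $2r(p-1)-r=r(2p-3)$ rather than $2r(p-1)$. Ignoring these mixed monomials makes the dimension count wrong, and it is precisely why the paper detects on the hook subgroups $K_{\ell m}$ and their sub-hooks $L_i$ rather than on individual root subgroups.

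The collapse and vanishing mechanism you propose --- characterizing the image of restriction as the $W$-stable part of $H^*(U_n)^T$ and then invoking spectral-sequence differentials coming from Chevalley commutator relations --- is not a correct reading of the Cartan--Eilenberg stability condition, and the paper never tracks differentials explicitly. The actual mechanism has two parts: (i) the Detection Theorems~\ref{detectionthm} and~\ref{tdetectionthm} show that $H^{r(2p-3)}(GL_n\F_{p^r})$ restricts injectively to a product over certain $T$-normalized subgroups of $U_n$ (root subgroups when $p=2$, hook subgroups when $p$ is odd); and (ii) a separate, elementary argument (Corollary~\ref{rightedge}, Lemma~\ref{smallhook}, resting on the vanishing range of Lemma~\ref{edgegroup}) shows that restriction from $GL_n\F_{p^r}$ to almost all of those subgroups is \emph{zero}, because each is conjugate into a right-edge--type subgroup whose cohomology vanishes through a strictly larger range. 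This ``restriction vanishes'' step carries the whole argument and is missing from your proposal. Your explanation of the $n>p$ case is also off: vanishing comes not from new differentials or from the failure of Proposition~\ref{exponent}, but from elementary counting --- the surviving full-hook monomial would require $y_{1n}^{p-n}$ with a negative exponent, hence does not exist; and for $p=2$ with $n>2$, all $\binom{n}{2}$ candidates already die because restriction from $GL_n\F_{2^r}$ to every root subgroup vanishes.
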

\noindent In particular, the classes constructed in section~\ref{cohomology} generate their respective cohomology groups, and there is no other cohomology in this degree.  More precisely, the map of Theorem~\ref{injection} is surjective in degree $r(2p-3)$.

The case where $p$ is odd and $r>1$ remains elusive, because of a lack of knowledge of the cohomology of the ``hook subgroup''
\[ \begin{bmatrix} 1&*&*\\&I_{n-2}&*\\&&1 \end{bmatrix}\leq GL_n\F_{p^r}. \]

% outline, $p$ odd: We begin by analyzing the cohomology of $\gr U$, and conclude that $H^{r(2p-3)}(B_n\F_{p^r})$ is detected on the so-called ``hook subgroups;'' then we'll show that only the largest such subgroup is relevant to the calculation of $H^{r(2p-3)}(GL_n\F_{p^r})$, and finish the argument (in the case $r=1$) by bounding that part of its cohomology which is not detected on smaller hooks.

\section{Cohomology detection on associated graded groups}
We begin by constructing machinery helpful for studying the lowest (potentially) nontrivial cohomology group (of the general linear groups or their Borel subgroups).  It works by comparing the cohomology of the $p$-Sylow subgroup to the cohomology of its associated graded group (with respect to a central filtration).  This can be done relative to a collection of subgroups, and equivariantly with respect to its normalizer.

%potentially useful for other type Borels, etc?

\subsection{Spectral sequence background}
Fix a coefficient field $\F$.  To be clear, here is what we mean by the term ``spectral sequence'' for this section:

\begin{defn}
A spectral sequence is a sequence of $\bbN$-graded cochain complexes of $\F$-vector spaces (``pages'') $E_2^*,E_3^*,E_4^*,\dots$, each of which is the homology of the last.  To avoid convergence subtleties we shall further stipulate that the differential on $E_r$ vanishes below degree $r-1$, ensuring that the sequence stabilizes in each degree.  Let $E_\infty^*$ denote the stable value.  We say that the spectral sequence converges to a graded vector space $H^*$ (called ``the abutment'') if there is a filtration $F_1\subset F_2\subset\cdots$ of $H^*$, finite in each degree, such that $E_\infty^*$ is the associated graded space to the filtration.  A map between spectral sequences $E^*_*,{E'}^*_*$ converging to $H^*,{H'}^*$ (resp.) is defined as a map $f:E_2^*\to {E'}^*_2$ together with a map $g:H^*\to {H'}^*$, such that the maps induced by $f$ on each page commute with all of the successive differentials, and the map induced by $f$ on the $E_\infty$ page equals the associated graded map induced by $g$.
\end{defn}

\begin{rem}
Henceforth, we consider the abutment as part of the data of a spectral sequence (and the map on abutments as part of a map of spectral sequences), although these data are not uniquely determined by the pages of the spectral sequence.  Also, this definition does not keep track of filtration degrees, because they play no role in the theorem we are building up to.
\end{rem}

\begin{lemma}\label{filteredvs}
\begin{enumerate}[(a)]
\item Let $A,B$ be vector spaces equipped with finite filtrations, and $f:A\to B$ a filtration-preserving homomorphism.  Then
\[ \dim\ker(A\to B)\leq\dim\ker(\gr A\to\gr B). \]
\item Let $f:E_*^*\to {E'}_*^*$ be a map of spectral sequences with map $H^*\to {H'}^*$ on abutments.  Then for all $d$,
\[ \dim\ker(H^d\to {H'}^d)\leq\dim\ker(E_\infty^d\to {E'}_\infty^d). \]
\end{enumerate}
\end{lemma}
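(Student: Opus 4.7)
The plan is to derive part (b) as an immediate corollary of part (a). By definition of a map of spectral sequences, the map on abutments $g:H^*\to{H'}^*$ preserves the specified filtrations $F_\bullet H^*$ and $F_\bullet {H'}^*$, and its associated graded is precisely the $E_\infty$-page map. Since these abutment filtrations are finite in each degree, applying (a) degreewise to $g$ yields exactly the inequality in (b).

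For part (a), the strategy is to filter $\ker f$ by the induced filtration $F_i(\ker f):=\ker f\cap F_i A$ and to compare the associated graded of this filtration with $\ker(\gr f)$. The main step is to exhibit, for each $i$, a natural injection
\[ \frac{F_i(\ker f)}{F_{i-1}(\ker f)} \hookrightarrow \ker\!\Bigl(\tfrac{F_i A}{F_{i-1}A}\to\tfrac{F_i B}{F_{i-1}B}\Bigr). \]
I would first verify this map is well-defined (immediate because $f$ vanishes on $F_i(\ker f)$) and then injective (an element of $F_i(\ker f)$ lying in $F_{i-1}A$ automatically lies in $F_{i-1}(\ker f)$). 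Summing over $i$ gives $\dim\gr(\ker f)\leq\dim\ker(\gr f)$.

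To close the argument, I would invoke the standard fact that a finite filtration of a vector space has the same total dimension as its associated graded, so $\dim\ker f=\dim\gr(\ker f)$. Combining this equality with the preceding inequality delivers the result.

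I do not anticipate a serious obstacle; the only subtlety to watch for is conflating $\gr(\ker f)$ with $\ker(\gr f)$, which are in general unequal—the former is only a subspace of the latter, and this discrepancy is exactly the slack that turns the desired statement into an inequality rather than an equality.
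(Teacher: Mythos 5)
Your proof of (b) is identical in spirit to the paper's: apply (a) in each degree $d$ to the finite filtration of the abutment, using that a map of spectral sequences induces on $E_\infty$ the associated graded of the map on abutments.

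Your proof of (a) is correct but follows a genuinely different route from the paper's. The paper reduces by induction to a length-two filtration $0\leq A'\leq A$, $0\leq B'\leq B$, and then applies the Snake Lemma to the map of short exact sequences $0\to A'\to A\to A/A'\to 0$ and $0\to B'\to B\to B/B'\to 0$ to get the left-exact sequence
\[ 0\to\ker(A'\to B')\to\ker(A\to B)\to\ker(A/A'\to B/B'), \]
from which the dimension count follows. You instead filter $\ker f$ by $F_i(\ker f)=\ker f\cap F_iA$, construct the natural injection $\gr_i(\ker f)\hookrightarrow\ker(\gr_i f)$ directly, sum over $i$, and then use $\dim\ker f=\dim\gr(\ker f)$. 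Both arguments are sound; yours avoids the induction and the Snake Lemma in favor of a single explicit inclusion, and has the pedagogical merit of making visible exactly where the slack lives (namely that $\gr(\ker f)$ is a subspace of, but generally strictly smaller than, $\ker(\gr f)$). The paper's inductive version is more compact on the page but says the same thing after unwinding. Either is acceptable here.
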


\begin{proof}
Part (b)  follows by applying part (a) to the degree $d$ part (whose filtration is finite).
For part (a), it suffices by induction to consider the case where the filtrations have length two:
\[ 0\leq A'\leq A \+ 0\leq B'\leq B. \]
In this case, the Snake Lemma gives a left exact sequence
\[ 0\to\ker(A'\to B')\to\ker(A\to B)\to\ker(A/A'\to B/B'), \]
which shows
\begin{align*}
\dim\ker(A\to B)&\leq\dim\ker(A'\to B')+\dim\ker(A/A'\to B/B') \\
&=\dim\ker(\gr A\to\gr B).
\qedhere
\end{align*}
\end{proof}

\begin{comment}
\begin{lemma}\label{filteredvs}
\begin{enumerate}[(a)]
\item Let $A,B$ be vector spaces equipped with finite filtrations.  The natural map from filtration-preserving homomorphisms $A\to B$ to grade-preserving homomorphisms $\gr A\to\gr B$ preserves non-injectivity and non-surjectivity.
\item In a map of spectral sequences, the map on abutments is injective (resp.~surjective) in degree $d$ if the map on $E_\infty$ pages is injective (resp.~surjective) in degree $d$.
\end{enumerate}
\end{lemma}

\begin{proof}
For the first statement, it suffices by induction to consider the case where the filtrations have length two, which follows from the snake lemma.  The second statement follows by applying the first to the degree $d$ part (whose filtration is finite).
\end{proof}
\end{comment}

\begin{lemma}\label{sskernel}
Fix $d\in\bbZ$. \begin{enumerate}[(a)]
\item Let $f^\bullet:C^\bullet\to D^\bullet$ be a map of cochain complexes.  
If $D^{d-1}=0$ (or if $\delta:D^{d-1}\to D^d$ is zero), then
\[ \dim\ker(H^d(C^\bullet)\to H^d(D^\bullet))\leq\dim\ker(C^d\to D^d). \]
\item Let $f:E_*^*\to {E'}_*^*$ be a map of spectral sequences with map $H^*\to {H'}^*$ on abutments.  If ${E'}_2^{d-1}=0$ (or $d=1$), then
\[ \dim\ker(H^d\to {H'}^d)\leq\dim\ker(E_2^d\to {E'}_2^d). \]
\end{enumerate}
\end{lemma}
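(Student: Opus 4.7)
The plan is to prove (a) directly and then derive (b) by applying (a) page by page.

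For (a), I would take a cocycle $c \in Z^d(C^\bullet)$ whose class $[c]$ lies in $\ker(H^d(C^\bullet)\to H^d(D^\bullet))$. Then $f^d(c) = \delta^{d-1}(y)$ for some $y \in D^{d-1}$, and under either hypothesis ($D^{d-1}=0$ or $\delta|_{D^{d-1}}=0$) this forces $f^d(c)=0$. The class map $c \mapsto [c]$ therefore gives a surjection $Z^d(C^\bullet) \cap \ker(f^d) \twoheadrightarrow \ker(H^d(C^\bullet)\to H^d(D^\bullet))$, which yields the bound because $Z^d(C^\bullet) \cap \ker(f^d)$ is a subspace of $\ker(C^d \to D^d)$.

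For (b), I would first observe that each ${E'}_{r+1}^{d-1}$ is a subquotient of ${E'}_r^{d-1}$, so the hypothesis ${E'}_2^{d-1} = 0$ propagates to ${E'}_r^{d-1} = 0$ for every $r \geq 2$. Part (a) applied to the cochain complex $(E_r^\bullet, d_r)$ with its map to $({E'}_r^\bullet, d_r)$, considered in degrees $d-1, d, d+1$, then gives
\[ \dim\ker(E_{r+1}^d \to {E'}_{r+1}^d) \leq \dim\ker(E_r^d \to {E'}_r^d) \]
for each $r \geq 2$. Since the sequence stabilizes in each degree by assumption, iterating yields
\[ \dim\ker(E_\infty^d \to {E'}_\infty^d) \leq \dim\ker(E_2^d \to {E'}_2^d). \]
Lemma~\ref{filteredvs}(b) then converts this into the desired bound on the kernel of the map of abutments $H^d \to {H'}^d$.

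For the parenthetical case $d = 1$, the stipulation that $d_r$ vanishes below degree $r-1$ means that every incoming differential into $E_r^1$ (originating in degree $0$) vanishes for $r \geq 2$, so $E_\infty^1$ identifies with a subspace of $E_2^1$, and similarly for the primed sequence. The kernel of a restriction of a linear map is automatically at most as large as the kernel of the full map, so the conclusion holds without needing any vanishing of ${E'}_2^0$.

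Most of the work is bookkeeping once (a) is in hand; the one place to be careful is verifying that the vanishing hypothesis on ${E'}^{d-1}$ is inherited page by page, which is automatic since any subquotient of the zero space is zero.
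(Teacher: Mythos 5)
Your proposal is correct and follows essentially the same route as the paper: for (a), working with $Z^d(C^\bullet)\cap\ker(f^d)$ and noting it surjects onto the kernel in cohomology is just a cocycle-level phrasing of the paper's use of the kernel complex $K^\bullet=\ker(C^\bullet\to D^\bullet)$ and the subquotient $H^d(K^\bullet)$ of $K^d$, and for (b) you propagate the vanishing hypothesis page by page, iterate (a), and finish with Lemma~\ref{filteredvs}(b) exactly as the paper does, including the same observation about no incoming differentials in degree $1$.
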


\begin{proof}
Let $K^\bullet=\ker(C^\bullet\to D^\bullet).$  Because the composite
$H^d(K^\bullet)\to H^d(C^\bullet)\to H^d(D^\bullet)$
is zero, we have a map
\[ H^d(K^\bullet)\to\ker(H^d(C^\bullet)\to H^d(D^\bullet)). \]
Using the fact that $D^{d-1}=0$, we see that this map is surjective.
Since $H^d(K^\bullet)$ is a subquotient of $K^d$, this proves part (a).

For part (b), repeatedly apply part (a) to get
\[ \dim\ker(E_\infty^d\to {E'}_\infty^d)\leq\dim\ker(E_2^d\to {E'}_2^d); \]
then apply Lemma~\ref{filteredvs}(b).  At each stage, we have ${E'}_r^{d-1}=0$, or else $d=1$ (there are no differentials $E_r^0\to E_r^1$ for $r\geq2$).
\end{proof}

\begin{rem}
In the important special case (of Lemma~\ref{sskernel}(b)) where the map on $E_2$-pages is injective, the result can be strengthened to the following (which we will not use):
\begin{lemma}
If $E_2^i\to {E'}_2^i$ is surjective for $i=d-1$ and injective for $i=d$, then the same things hold for $H^i\to {H'}^i$.
\end{lemma}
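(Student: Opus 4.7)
The plan is to propagate the hypothesis from $E_2$ up through successive pages to $E_\infty$, then to transfer it to the abutment via the filtration. The essential observation is that \emph{surjectivity at degree $d-1$ and injectivity at degree $d$ must be handled together}: each of the two hypotheses is needed in the inductive step to preserve the other under passage to cohomology.

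Concretely, suppose by induction that on page $E_r$ the map $f$ is surjective in degree $d-1$ and injective in degree $d$. For injectivity on $E_{r+1}^d = \ker(d_r^d)/\im(d_r^{d-1})$: take $[x] \in E_{r+1}^d$ with image zero in ${E'}_{r+1}^d$, so that $f(x) = d_r^{d-1}(y')$ for some $y' \in {E'}_r^{d-1}$; lift $y'$ to $y \in E_r^{d-1}$ using surjectivity in degree $d-1$; then $f(x - d_r^{d-1}(y)) = 0$, and injectivity at $d$ gives $x = d_r^{d-1}(y)$, so $[x] = 0$. For surjectivity on $E_{r+1}^{d-1}$: take $[z'] \in {E'}_{r+1}^{d-1}$, lift a representative $z' \in \ker(d_r^{d-1})$ to $z \in E_r^{d-1}$ by surjectivity; then $f(d_r^{d-1}(z)) = d_r^{d-1}(z') = 0$ forces $d_r^{d-1}(z) = 0$ by injectivity at $d$, and $[z]$ maps to $[z']$. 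Since the spectral sequence stabilizes in each degree, induction on $r$ transfers both conditions to $E_\infty$.

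To pass from $E_\infty$ to the abutment, I would invoke two filtered-vector-space facts. Injectivity at degree $d$ on $H^d$ is immediate from Lemma~\ref{filteredvs}(b). For surjectivity at degree $d-1$, the dual statement holds: if $f: A \to B$ is a filtration-preserving map of finitely filtered vector spaces and $\gr f$ is surjective, then $f$ is surjective. This is proved by the same two-step reduction used for Lemma~\ref{filteredvs}(a), but reading the Snake Lemma off the cokernel end rather than the kernel end: from $\coker(A' \to B') = \coker(A/A' \to B/B') = 0$ one concludes $\coker(A \to B) = 0$, and induction handles longer filtrations.

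The main obstacle is conceptual rather than technical: it is tempting to try to propagate surjectivity at $d-1$ and injectivity at $d$ independently, but neither inductive step goes through without the other hypothesis available simultaneously. Once the coupling is recognized, the proof reduces to the diagram chase on each page together with the two filtered-vector-space facts, both of which are elementary consequences of the Snake Lemma.
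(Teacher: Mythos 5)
Your proof is correct, and it follows the same template the paper uses for Lemma~\ref{sskernel}: establish a cochain-complex-level statement, iterate it page by page to $E_\infty$ (using that the spectral sequence stabilizes in each degree), then descend to the abutment via a filtered-vector-space lemma. The paper itself states this strengthened lemma without proof (``which we will not use''), so there is nothing to compare against directly; but your two added ingredients --- the observation that surjectivity at $d-1$ and injectivity at $d$ must be propagated simultaneously through the diagram chase, and the cokernel/surjectivity counterpart of Lemma~\ref{filteredvs}(a) read off the other end of the Snake Lemma --- are exactly the pieces needed, and your verification of each is sound.
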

\noindent The same remark applies to the detection theorems of the next section.
\end{rem}

\begin{comment}
\begin{lemma}\label{sskernel}
Fix $d\in\bbZ$. \begin{enumerate}[(a)]
\item Let $f^\bullet:C^\bullet\to D^\bullet$ be a map of cochain complexes.  If $f^{d-1}$ is surjective and $f^d$ is injective, then the same things are true of $H(f):H(C^\bullet)\to H(D^\bullet)$.
\item If a map of spectral sequences is surjective on $E_2^{d-1}$ and injective on $E_2^d$, then the map of abutments is surjective in degree $d-1$ and injective in degree $d$.
\end{enumerate}
\end{lemma}

\begin{proof}
The first statement is a classic diagram chase.  For part (b), repeatedly apply part (a) to get the desired properties on the $E_\infty$ page, then apply Lemma~\ref{filteredvs}(b) to get them on the abutment.
\end{proof}
\end{comment}

That concludes our brief study of spectral sequence maps; now we switch gears and construct some to apply this lemma to.

\subsection{Spectral sequences for centrally filtered groups}
\begin{prop}\label{sequenceofsss}
Let $G$ be a (nilpotent) finite group with central filtration
$1=G_0\leq G_1\leq\cdots\leq G_n=G$.  Write $\gr G=\prod_{i=1}^n(G_i/G_{i-1})$.  Then there is a sequence of spectral sequences beginning with the $\F$-cohomology of the associated graded group, and ending with the cohomology of $G$.  More precisely, there are spectral sequences ${}_1E,\cdots,{}_nE$ with ${}_1E^*_2=H^*(\gr G)$, such that ${}_kE^*_*$ converges to ${}_{k+1}E^*_2$ for each $k<n$, and ${}_nE^*_*$ converges to $H^*(G)$.  These are all natural with respect to filtration-preserving group homomorphisms.
\end{prop}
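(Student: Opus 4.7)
My plan is to construct each ${}_kE$ as the Lyndon-Hochschild-Serre (LHSS) spectral sequence of an appropriate central extension, and use the K\"unneth theorem to identify $E_2$-pages with those of products. For $k = 0, 1, \ldots, n$, define the intermediate groups
$$G^{(k)} = (G/G_{n-k}) \times \prod_{i=1}^{n-k} G_i/G_{i-1},$$
so that $G^{(0)} = \gr G$ and $G^{(n)} = G$.  The aim is to arrange that ${}_kE$ has $E_2$-page $H^*(G^{(k-1)})$ and abuts to $H^*(G^{(k)})$.

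Setting $j = n-k+1$, the central filtration hypothesis ($[G, G_j] \leq G_{j-1}$) makes $G_j/G_{j-1}$ central in $G/G_{j-1}$, giving a central extension $1 \to G_j/G_{j-1} \to G/G_{j-1} \to G/G_j \to 1$. Taking its product with the identity extension on $\prod_{i<j} G_i/G_{i-1}$ produces a central extension
$$1 \to G_j/G_{j-1} \to G^{(k)} \to (G/G_j) \times \prod_{i<j} G_i/G_{i-1} \to 1.$$
I take ${}_kE$ to be the LHSS of this extension. Because $G_j/G_{j-1}$ is central, the action of the quotient on $H^*(G_j/G_{j-1})$ is trivial, so by K\"unneth the $E_2$-page is
$$H^*\bigl((G/G_j) \times \prod_{i<j} G_i/G_{i-1}\bigr) \otimes H^*(G_j/G_{j-1}) \;=\; H^*(G^{(k-1)}),$$
while the abutment is $H^*(G^{(k)})$ by construction. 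In particular, ${}_1E_2 = H^*(\gr G)$ and ${}_nE$ abuts to $H^*(G)$, as required.

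Naturality follows because a filtration-preserving homomorphism $f : G \to G'$ induces compatible group homomorphisms $G^{(k)} \to {G'}^{(k)}$ and compatible maps of the central extensions, and the LHSS is functorial in maps of extensions; the K\"unneth isomorphisms used to identify $E_2$-pages are natural in each factor.  The main obstacle is not substantive: it is merely the bookkeeping of verifying that, under the K\"unneth identifications, the abutment of ${}_kE$ coincides literally (not just up to isomorphism) with the $E_2$-page of ${}_{k+1}E$, so that the sequence of spectral sequences can be chained. All other ingredients — the LHSS for a central extension, K\"unneth for cohomology of products, and triviality of the local system under centrality — are standard.
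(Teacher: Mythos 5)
Your proof is correct and takes essentially the same approach as the paper: each ${}_kE$ is built from the Lyndon--Hochschild--Serre spectral sequence of the central extension $G_j/G_{j-1}\to G/G_{j-1}\to G/G_j$. The only cosmetic difference is that the paper tensors that LHSS with the fixed graded vector space $H^*(\gr G_{j-1})$ carrying trivial differentials, whereas you realize the same spectral sequence as the LHSS of the product extension with $\prod_{i<j}G_i/G_{i-1}$, so that every intermediate page is literally the cohomology of one of your auxiliary groups $G^{(k)}$.
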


\begin{proof}
For each $k$ there is a central extension
\[ G_{k+1}/G_k\to G/G_k\to G/G_{k+1}, \]
and hence a spectral sequence beginning with
\[ E_2^* = H^*(G_{k+1}/G_k)\otimes H^*(G/G_{k+1}), \]
converging to $H^*(G/G_k)$.  Tensoring this with $H^*\gr G_k$ (given trivial differentials) yields the desired spectral sequence
\[ H^*(\gr G_{k+1})\otimes H^*(G/G_{k+1}) \Rightarrow H^*(\gr G_k)\otimes H^*(G/G_k). \qedhere \]
\end{proof}

Now we give the ``prototype version'' of our detection theorem (we shall need an equivariant version below).
Notice that a central filtration of $G$ is inherited by any subgroup $K\leq G$, and we may regard $\gr K$ as a subgroup of $\gr G$.

\begin{thm}[Detection Theorem]\label{detectionthm}
Let $G$ be a group with (chosen) central filtration.  Fix $d>0$.  Suppose that $K_1,\dots, K_k$ are subgroups such that, for all $\ell$,
\[ H^{d-1}(\gr K_\ell)=0 \]
(or $d=1$).  Then
\[ \dim\ker\left(H^d(G)\to\prod_{\ell=1}^k H^d(K_\ell)\right)\leq
\dim\ker\left(H^d(\gr G)\to\prod_{\ell=1}^k H^d(\gr K_\ell)\right). \]
In particular, if $H^d(\gr G)$ is detected on $\{\gr K_\ell\}$ then $H^d(G)$ is detected on $\{K_\ell\}$.
\end{thm}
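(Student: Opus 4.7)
The plan is to run Proposition~\ref{sequenceofsss} on $G$ and on each $K_\ell$ (using the inherited filtration), then bundle the targets and iterate Lemma~\ref{sskernel}(b). Each inclusion $K_\ell\hookrightarrow G$ is filtration-preserving, so by naturality there are compatible maps of spectral sequences ${}_iE(G)\to{}_iE(K_\ell)$ for $i=1,\dots,n$, which on the final abutment realize the restriction $H^*(G)\to H^*(K_\ell)$. To handle all $\ell$'s simultaneously, I would form the direct-sum spectral sequence $\tilde E_i := \bigoplus_\ell {}_iE(K_\ell)$ at each stage (pages the direct sum, differentials diagonal); its abutment is $\prod_\ell$ the abutment of ${}_iE(K_\ell)$. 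The combined map ${}_iE(G)\to\tilde E_i$ is itself a map of spectral sequences, and for $i=n$ its map on abutments is precisely the restriction $H^*(G)\to\prod_\ell H^*(K_\ell)$ whose kernel we wish to bound.

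In order to apply Lemma~\ref{sskernel}(b) at every stage of the tower, I would need $\tilde E_{i,2}^{d-1}=0$, equivalently ${}_iE_2^{d-1}(K_\ell)=0$ for all $\ell$ (or else the $d=1$ escape clause in Lemma~\ref{sskernel}(b) applies). I would verify this by induction on $i$, proving $\dim{}_iE_2^{j}(K_\ell)\leq\dim H^{j}(\gr K_\ell)$ for every $j$: the base $i=1$ is equality from Proposition~\ref{sequenceofsss}, and the inductive step uses that ${}_{i+1}E_2$ equals the abutment of ${}_iE$, whose total dimension in each degree is at most that of $E_\infty$, itself a subquotient of $E_2$. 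Hence the single hypothesis $H^{d-1}(\gr K_\ell)=0$ forces ${}_iE_2^{d-1}(K_\ell)=0$ uniformly in $i$.

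The iteration is then routine. Applying Lemma~\ref{sskernel}(b) to the map ${}_iE(G)\to\tilde E_i$ at each stage gives
\[ \dim\ker\bigl({}_{i+1}E_2^d(G)\to\tilde E_{i+1,2}^d\bigr)\leq\dim\ker\bigl({}_iE_2^d(G)\to\tilde E_{i,2}^d\bigr), \]
and similarly
\[ \dim\ker\Bigl(H^d(G)\to\prod_\ell H^d(K_\ell)\Bigr)\leq\dim\ker\bigl({}_nE_2^d(G)\to\tilde E_{n,2}^d\bigr). \]
Chaining these back to $i=1$, where the $E_2$ map is exactly $H^d(\gr G)\to\prod_\ell H^d(\gr K_\ell)$, yields the required inequality. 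The ``in particular'' clause is then automatic: if the right-hand kernel vanishes then so does the left-hand one, which is the meaning of detection.

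The one nontrivial aspect is the propagation step: the hypothesis is imposed only on the initial $E_2$ page $H^*(\gr K_\ell)$, yet Lemma~\ref{sskernel}(b) must be invoked at all $n$ stages of the tower. The dimension-counting induction handles this cleanly. Everything else is a direct combination of Proposition~\ref{sequenceofsss}, its naturality, and Lemma~\ref{sskernel}(b); forming direct sums of spectral sequences to accommodate the product $\prod_\ell H^*(K_\ell)$ creates no genuine difficulty since the family is finite.
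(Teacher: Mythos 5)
Your proposal is correct and follows essentially the same route as the paper's proof: apply Proposition~\ref{sequenceofsss} naturally to $G$ and to each $K_\ell$, combine the target spectral sequences over $\ell$ (your direct sum equals the paper's finite product), and iterate Lemma~\ref{sskernel}(b) down the tower. The one thing you spell out that the paper leaves implicit is the propagation of the vanishing hypothesis $\,{}_iE_2^{d-1}(K_\ell)=0$ across all $n$ stages via the dimension inequality $\dim{}_{i+1}E_2^j\leq\dim{}_iE_2^j$; this is a correct and worthwhile elaboration, not a different argument.
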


\begin{comment}
\begin{thm}[Detection Theorem]\label{detectionthm}
Let $G$ be a group with (chosen) central filtration.  Fix $d>0$.  Suppose that $K_1,\dots, K_k$ are subgroups such that the restriction
\[ H^i \gr G\to\prod_{\ell=1}^k H^i\gr K_\ell \]
is surjective for $i=d-1$ and injective for $i=d$.  Then the restriction
\[ H^d G\to\prod_{\ell=1}^k H^d K_\ell \]
is injective.
\end{thm}
\end{comment}

\begin{proof}
Apply Proposition~\ref{sequenceofsss} (including naturality) to both $G$ and $K_\ell$ to get two sequences of spectral sequences, and compatible restriction maps.  Then take the product of the latter over the various $\ell$ (a finite product of spectral sequences is a spectral sequence, converging to the product of the abutments).  Now we have a sequence of spectral sequence maps, which has at either end the two maps displayed in the theorem.  Repeatedly applying Lemma~\ref{sskernel}(b) finishes the proof.
\end{proof}

Now for the $T$-equivariant version of the proposition and theorem.
A finite group $T$ is called ``nonmodular'' if $\chr\F$ does not divide $|T|$.

\begin{prop}\label{tsequenceofsss}
Let $G$ be a (nilpotent) group with central filtration $1=G_0\leq G_1\leq\cdots\leq G_n=G$.  Suppose that a nonmodular group $T$ acts on $G$ by automorphisms, preserving the filtration. Then there are spectral sequences ${}_1E,\cdots,{}_nE$ with ${}_1E^*_2=H^*(\gr G)^T$, such that ${}_kE^*_*$ converges to ${}_{k+1}E^*_2$ for each $k<n$, and ${}_nE^*_*$ converges to $H^*(G)^T$.  These are all natural with respect to filtration-preserving $T$-equivariant group homomorphisms.
\end{prop}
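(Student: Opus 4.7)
The plan is to carry out the construction of Proposition~\ref{sequenceofsss} $T$-equivariantly and then apply the fixed-points functor $(-)^T$. The key point is that nonmodularity of $T$ guarantees, by Maschke's theorem, that $(-)^T$ is an exact functor on $\F[T]$-modules, and hence commutes with the formation of subquotients and in particular with the passage from one page of a spectral sequence to the next.

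First, since $T$ acts on $G$ by filtration-preserving automorphisms, each central extension
\[ G_{k+1}/G_k \to G/G_k \to G/G_{k+1} \]
is $T$-equivariant. Functoriality of the Lyndon--Hochschild--Serre construction produces a $T$-equivariant spectral sequence
\[ E_2^* = H^*(G_{k+1}/G_k) \otimes H^*(G/G_{k+1}) \;\Rightarrow\; H^*(G/G_k); \]
tensoring over $\F$ with the $T$-module $H^*(\gr G_k)$ (trivial differentials, diagonal $T$-action) gives a $T$-equivariant spectral sequence
\[ H^*(\gr G_{k+1})\otimes H^*(G/G_{k+1}) \;\Rightarrow\; H^*(\gr G_k)\otimes H^*(G/G_k), \]
exactly as in the proof of Proposition~\ref{sequenceofsss}, but now with a $T$-action on every page and on the abutment.

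Next I would apply $(-)^T$ termwise. Exactness of $(-)^T$ implies that the restricted differentials still have the property that $\ker/\mathrm{im}$ computes homology, so the fixed-point pages $({}_kE_r)^T$ assemble into a spectral sequence. Exactness also preserves the finite filtration on the abutment, so the resulting spectral sequence converges to $(H^*(\gr G_k)\otimes H^*(G/G_k))^T$. Indexing so that ${}_1E$ corresponds to $k=n-1$ and ${}_nE$ to $k=0$, we obtain ${}_1E_2 = H^*(\gr G)^T$ (because $G/G_n=1$) and abutment of ${}_nE$ equal to $H^*(G)^T$ (because $\gr G_0=1$), while by construction the abutment of each ${}_kE^T$ is the $E_2$-page of ${}_{k+1}E^T$. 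Naturality with respect to filtration-preserving $T$-equivariant homomorphisms is immediate, since every ingredient—the central extensions, the LHS construction, the tensor product with $H^*(\gr G_k)$, and $(-)^T$—is functorial.

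The main (and essentially only) subtlety beyond Proposition~\ref{sequenceofsss} is verifying that $(-)^T$ legitimately passes through the spectral-sequence machinery. This rests entirely on exactness of $(-)^T$, which is precisely why the hypothesis $\chr\F\nmid|T|$ is needed: without it, a $T$-equivariant inclusion of $\F[T]$-modules need not remain injective after taking invariants, and applying the fixed-point functor would no longer give a spectral sequence in any useful sense.
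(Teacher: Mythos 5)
Your proof is correct and follows essentially the same approach as the paper: you apply Proposition~\ref{sequenceofsss} to get a sequence of $T$-equivariant spectral sequences and then take $T$-invariants, relying on exactness of $(-)^T$ (i.e.\ Maschke's theorem) to see that $(-)^T$ commutes with forming successive pages and with passing to the associated graded of the finite filtration on the abutment. The paper isolates this last step as a separate statement (Lemma~\ref{nonmodularss}), but the content is the same.
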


\begin{proof}
Apply Proposition~\ref{sequenceofsss}, and notice that $T$ acts on each of the spectral sequences, by naturality.  By taking $T$-invariants everywhere, we will be done by the following lemma.
\end{proof}

\begin{lemma}\label{nonmodularss}
Suppose $E_*^*$ is a spectral sequence converging to $H^*$, and a nonmodular group $T$ acts on it by spectral sequence automorphisms.  Then $(E_*^*)^T$ is a spectral sequence converging to $(H^*)^T$.
\end{lemma}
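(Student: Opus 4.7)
The plan is to deduce everything from the fact that, since $|T|$ is invertible in $\F$, the functor $(-)^T$ from $\F[T]$-modules to $\F$-vector spaces is exact: the averaging idempotent $e = \frac{1}{|T|}\sum_{t \in T} t \in \F[T]$ splits every $T$-equivariant inclusion $V^T \hookrightarrow V$. Consequently $(-)^T$ commutes with the formation of homology of any $T$-equivariant cochain complex, and with the formation of the associated graded of any $T$-equivariant finite filtration. I would apply these two consequences at every step.

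On the level of pages, each $E_r^*$ is a cochain complex on which $T$ acts by cochain automorphisms, so the invariants $(E_r^*)^T$ form a subcomplex under the restriction of $d_r$. Exactness of $(-)^T$ gives $H^*((E_r^*)^T) = H^*(E_r^*)^T = (E_{r+1}^*)^T$ for every $r \geq 2$, so the invariant pages form a spectral sequence in the sense of the paper's definition. The condition that $d_r$ vanishes below degree $r-1$ restricts to the analogous vanishing on $(E_r^*)^T$, so the required stabilization holds; and since stabilization in any fixed total degree $d$ occurs after finitely many pages, one gets $((E_*^*)^T)_\infty^d = (E_\infty^d)^T$ for every $d$.

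For the abutment, the one point that needs care is $T$-equivariance of the filtration $F_1 \subset F_2 \subset \cdots$ on $H^*$. This is already built into the setup: the paper's definition of a spectral sequence map requires the map $g$ on abutments to induce the map $f_\infty$ on associated graded, which forces $g$ to preserve the filtration. Applied to each $g_t$ with $t \in T$, this says every $F_i$ is $T$-stable. Exactness of $(-)^T$ applied to $0 \to F_{i-1} \to F_i \to F_i/F_{i-1} \to 0$ then identifies the associated graded of the filtration $F_1^T \subset F_2^T \subset \cdots$ on $(H^*)^T$ (still finite in each degree) with $(\gr H^*)^T = (E_\infty^*)^T$, matching the $E_\infty$-page of the invariant spectral sequence as computed above. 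Thus the one potential obstacle---filtration equivariance---is handled by the paper's own definitional choices, and the rest is essentially Maschke.
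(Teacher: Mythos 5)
Your proof is correct and follows essentially the same route as the paper's: both reduce everything to the exactness of $(-)^T$ on $\F[T]$-modules (Maschke), using it first to commute invariants with homology of the pages and then to commute invariants with associated graded of the abutment filtration. You add a small but genuine clarification the paper leaves implicit---that the paper's own definition of ``map of spectral sequences'' forces each $g_t$ to preserve the filtration on $H^*$, so the $F_i$ are automatically $T$-stable---which is a worthwhile observation but does not change the underlying argument.
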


\begin{proof}
To determine that $(E_*^*)^T$ is a spectral sequence, we need to show that the functor $(\cdot)^T=\hom_{\F T}(\F,\cdot)$ commutes with taking homology on cochain complexes of $\F T$-modules, i.e.~is exact.  This is true because, as $\chr\F \nmid\left|T\right|$, $\F$ is projective as an $\F T$-module.
To check that $(E_*^*)^T$ converges to $(H^*)^T$, we need to show that
$\gr (M^T)=(\gr M)^T$ as subspaces of $\gr M$,
for a finitely-filtered $\F T$-module $M$.  In turn, that requires showing that, for $N\leq M$, the map $M^T/N^T\to (M/N)^T$ is an isomorphism.  Again this follows by exactness of $T$-invariants.
\end{proof}

Now we are ready for the main theorem, giving conditions under which detection of $T$-invariant cohomology can be checked on associated graded groups.

\begin{thm}[Equivariant Detection Theorem]\label{tdetectionthm}
Let $G$ be a group with (chosen) central filtration.  Let a nonmodular group $T$ act on $G$, preserving the filtration.  Fix $d>0$.  Suppose that $K_1,\dots, K_k$ are subgroups, preserved by $T$, such that for each $\ell$,
\[ H^{d-1}(\gr K_\ell)^T=0. \]
(or $d=1$).  Then
\begin{align*}
&\dim\ker\left(H^d(G)^T\to\prod_{\ell=1}^k H^d(K_\ell)^T\right)\leq \\
&\dim\ker\left(H^d(\gr G)^T\to\prod_{\ell=1}^k H^d(\gr K_\ell)^T\right).
\end{align*}
In particular, if $H^d(\gr G)^T$ is detected on $\{\gr K_\ell\}$ then $H^d(G)^T$ is detected on $\{K_\ell\}$.
\end{thm}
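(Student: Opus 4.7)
The plan is to mimic the proof of Theorem~\ref{detectionthm}, substituting the equivariant version of the sequence of spectral sequences provided by Proposition~\ref{tsequenceofsss}. First I would apply Proposition~\ref{tsequenceofsss} to $G$ with its central filtration and $T$-action, obtaining a chain of spectral sequences ${}_1E,\dots,{}_nE$ with ${}_1E_2^*=H^*(\gr G)^T$, each converging to the $E_2$-page of the next, and the last converging to $H^*(G)^T$. Applying the same proposition to each $K_\ell$ (its inherited filtration is $T$-stable because $T$ preserves $K_\ell$) yields analogous chains ${}_jE^{(\ell)}$. By the naturality clause, with respect to filtration-preserving $T$-equivariant homomorphisms, the inclusions $K_\ell\hookrightarrow G$ induce maps ${}_jE\to {}_jE^{(\ell)}$ between corresponding pairs, compatible with the ``converges to'' transitions and with the final abutment maps $H^*(G)^T\to H^*(K_\ell)^T$.

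Next I would assemble these into a single chain of spectral-sequence maps by taking the finite product over $\ell$ on the target side, using that a finite product of spectral sequences is again a spectral sequence with abutment the product of the abutments. This gives a chain that begins with the $E_2$-page map $H^*(\gr G)^T\to\prod_\ell H^*(\gr K_\ell)^T$ and ends with the abutment map $H^*(G)^T\to\prod_\ell H^*(K_\ell)^T$.

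Finally I would iterate Lemma~\ref{sskernel}(b) along this chain. To invoke it at each stage in degree $d$, the target $E_2$-page must vanish in degree $d-1$ (or $d=1$). At the initial stage this is exactly the hypothesis $H^{d-1}(\gr K_\ell)^T=0$ (a finite product of zero spaces is zero). At each subsequent stage the target $E_2$-page is the abutment of the previous stage, which as a filtered vector space is built from subquotients of the preceding $E_2$-page in degree $d-1$; these vanish by induction. So Lemma~\ref{sskernel}(b) applies at every stage, and composing the resulting kernel bounds yields the stated inequality. The detection consequence is then immediate: taking the kernel at the starting stage to be zero forces all subsequent kernels to vanish as well.

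The only point requiring care — and it is already packaged into Proposition~\ref{tsequenceofsss} via Lemma~\ref{nonmodularss} — is that nonmodularity of $T$ makes $T$-invariants exact, so that forming invariants commutes with passing to homology and with taking associated graded groups of finite filtrations; this is what ensures the $T$-fixed points of the chain of spectral sequences remain a chain of spectral sequences with the expected abutments and naturality. Once that is in hand, the proof is essentially formal, and the main obstacle — constructing the equivariant spectral sequences themselves — has already been overcome in the preceding proposition.
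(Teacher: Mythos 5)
Your proposal is correct and matches the paper's approach exactly: the paper's own proof reads ``Same as before,'' meaning replicate the proof of Theorem~\ref{detectionthm} with Proposition~\ref{tsequenceofsss} in place of Proposition~\ref{sequenceofsss}, take the finite product over $\ell$, and iterate Lemma~\ref{sskernel}(b). You have also (correctly) spelled out the inductive vanishing of the intermediate $E_2^{d-1}$ terms, a point the paper leaves implicit.
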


\begin{comment}
\begin{thm}[Equivariant Detection Theorem]\label{tdetectionthm}
Let $G$ be a group with (chosen) central series.  Let a nonmodular group $T$ act on $G$, preserving the filtration.  Fix $d>0$.  Suppose that $K_1,\dots, K_k$ are subgroups, preserved by $T$, such that the restriction
\[ (H^i \gr G)^T\to\prod_{\ell=1}^k (H^i\gr K_\ell)^T \]
is surjective for $i=d-1$ and injective for $i=d$.  Then the restriction
\[ (H^d G)^T\to\prod_{\ell=1}^k (H^d K_\ell)^T \]
is injective.
\end{thm}
\end{comment}

\begin{proof}
Same as before.
\end{proof}

Incidentally, the special case where there are no $K_i$'s is also useful:

\begin{cor}\label{tvanishing}
Let $G$ be a group with (chosen) central series.  Let a nonmodular group $T$ act on $G$, preserving the filtration.  If $H^i(\gr G)^T=0$ for $0<i<d$, then $H^i(G)^T=0$ for $0<i<d$.
\end{cor}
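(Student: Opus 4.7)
The plan is to apply the sequence of $T$-equivariant spectral sequences from Proposition \ref{tsequenceofsss}. That proposition supplies spectral sequences ${}_1E,\dots,{}_nE$ with ${}_1E_2^* = H^*(\gr G)^T$, such that ${}_kE$ converges to ${}_{k+1}E_2^*$ for each $k<n$ and ${}_nE$ converges to $H^*(G)^T$. I would induct on $k$ to show that ${}_kE_2^i=0$ for all $i$ with $0<i<d$ and all $1\leq k\leq n$. The base case $k=1$ is precisely the hypothesis $H^i(\gr G)^T=0$. For the inductive step, ${}_kE_\infty^i$ vanishes in the range $0<i<d$ as a subquotient of ${}_kE_2^i=0$; then the abutment of ${}_kE$ must vanish in that range, since it carries a filtration that is finite in each degree and whose associated graded is zero. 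This abutment equals ${}_{k+1}E_2^*$ for $k<n$ (giving the inductive step) and $H^*(G)^T$ at $k=n$ (giving the conclusion).

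Equivalently, the statement is the $k=0$ case of the Equivariant Detection Theorem (Theorem \ref{tdetectionthm}): with no subgroups $K_\ell$, both products in the kernel inequality collapse to the zero vector space and the hypothesis on $H^{d-1}(\gr K_\ell)^T$ is vacuous, so the theorem yields $\dim H^i(G)^T\leq\dim H^i(\gr G)^T=0$ for each $i$ with $0<i<d$. There is no real obstacle here; the substantive work was carried out in Lemma \ref{nonmodularss}, Proposition \ref{tsequenceofsss}, and Theorem \ref{tdetectionthm}, and in particular in the verification (Lemma \ref{nonmodularss}) that $T$-invariants behave correctly on both pages and abutments of a spectral sequence of nonmodular $T$-modules.
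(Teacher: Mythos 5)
Your proposal is correct and matches the paper's intent: the paper introduces this statement with ``Incidentally, the special case where there are no $K_i$'s is also useful,'' i.e.\ it is exactly the $k=0$ instance of Theorem~\ref{tdetectionthm} applied with $d$ replaced by each $i$ in the range, which is your second reading. Your first argument, unrolling Proposition~\ref{tsequenceofsss} directly, is just an explicit unwinding of the same proof and is also correct.
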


\section{Restriction to a root subgroup}
Now we return to studying the $\F_p$-cohomology of the finite general linear groups $GL_n\F_{p^r}$.  For the remainder of this chapter, all cohomology is with $\F_p$ coefficients, which will be suppressed from the notation.

The following vanishing result is equivalent to Proposition~\ref{gl2vanishing} from section~\ref{gl2} (using the spectral sequence for the obvious extension, which collapses to the vertical edge):
\begin{lemma}\label{gl2vanishingv2}
$H^i(\F_{p^r}\semidirect\F_{p^r}^\times;\F_p)=0$
for $0<i<r(2p-3)$.
\end{lemma}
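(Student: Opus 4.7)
The plan is to reduce immediately to Proposition~\ref{gl2vanishing} by a Sylow argument. Let $G=\F_{p^r}\semidirect\F_{p^r}^\times$, and consider the split extension
\[ \F_{p^r}\normal G\to\F_{p^r}^\times. \]
The subgroup $\F_{p^r}$ is a normal Sylow $p$-subgroup (its complement $\F_{p^r}^\times$ has order $p^r-1$, which is coprime to $p$), so by consequence~\ref{normalsylow} of the transfer composition formulae (equivalently, because the Lyndon-Hochschild-Serre spectral sequence collapses onto the vertical edge when $p\nmid\abs{G/H}$), the restriction map gives an isomorphism
\[ H^*(G;\F_p)\xrightarrow{\sim} H^*(\F_{p^r};\F_p)^{\F_{p^r}^\times}. \]

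Now I would simply invoke Proposition~\ref{gl2vanishing}, which already computed these invariants and showed that they vanish in degrees $0<i<r(2p-3)$ (this is exactly the statement used there, via the identification $H^*(GL_2\F_{p^r};\F_p)\cong H^*(\F_{p^r};\F_p)^{\F_{p^r}^\times}$ coming from the Sylow/transfer analysis of section~\ref{gl2}).

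There is essentially no obstacle; the content of the lemma is just a repackaging of Proposition~\ref{gl2vanishing} in a form more convenient for the applications of chapter~\ref{dimensionch}, where $\F_{p^r}\semidirect\F_{p^r}^\times$ will appear as (a conjugate of) a ``root--plus--torus'' subgroup inside the Borel of $GL_n\F_{p^r}$ rather than as the Borel of $GL_2\F_{p^r}$ itself. The only thing worth being careful about is noting explicitly that the Sylow subgroup is normal, so that the LHS spectral sequence collapse identifies $H^*(G;\F_p)$ with the $\F_{p^r}^\times$-invariants on the kernel rather than merely injecting into them.
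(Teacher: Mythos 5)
Your proof is correct and follows exactly the paper's intended argument: the paper explicitly says this lemma ``is equivalent to Proposition~\ref{gl2vanishing} ... (using the spectral sequence for the obvious extension, which collapses to the vertical edge),'' which is precisely the normal-Sylow/LHS collapse you describe, followed by the invariant computation already done in Proposition~\ref{gl2vanishing}.
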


This implies a vanishing range for the image of restriction from $GL_n\F_{p^r}$ to its ``right edge subgroup:''

\begin{cor}\label{rightedge}
Restriction
\[ H^i(GL_n\F_{p^r})\to H^i\left(\begin{bmatrix} I_{n-1} & \ast \\ &1 \end{bmatrix}\right) \]
vanishes for
\[ 0<i<r(n-1)(2p-3). \]
In particular, it vanishes in degree $i=r(2p-3)$ when $n>2$.
\end{cor}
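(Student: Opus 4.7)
The plan is to factor the restriction through the normalizer $P = \begin{bmatrix} GL_{n-1}\F_{p^r} & \ast \\ 0 & \F_{p^r}^\times \end{bmatrix}$ of $E$ in $GL_n\F_{p^r}$, reducing to a question of invariants. A direct calculation shows $E \triangleleft P$ with $P/E \cong GL_{n-1}\F_{p^r} \times \F_{p^r}^\times$ acting on $E \cong \F_{p^r}^{n-1}$ by $(A,d)\cdot v = d^{-1}Av$; since the $\F_{p^r}^\times$-factor acts through the scalar matrices inside $GL_{n-1}\F_{p^r}$, the effective action reduces to that of $GL_{n-1}\F_{p^r}$. Hence the image of the composition $H^*(GL_n\F_{p^r}) \to H^*(P) \to H^*(E)$ lies in $H^*(E;\F_p)^{GL_{n-1}\F_{p^r}}$, and it suffices to show this invariant ring vanishes in degrees $0<i<r(n-1)(2p-3)$.

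By the Universal Coefficient Theorem I work with $\F_{p^r}$-coefficients and the monomial basis from Proposition~\ref{vectorspacecoh}. Each monomial is an eigenvector for the diagonal torus $T' = (\F_{p^r}^\times)^{n-1} \leq GL_{n-1}\F_{p^r}$, so $T'$-invariance imposes Quillen's divisibility condition (Lemma~\ref{Quillenlemma}) on every engaged ``row.'' Optimizing row-by-row as in the proof of Proposition~\ref{gl2vanishing} (writing $A_i=\sum_k a_{ik}$, $B_i=\sum_k b_{ik}$ and using $A_i+B_i\geq r(p-1)$ together with $A_i\leq r$), each engaged row contributes at least $r(2p-3)$ to the cohomological degree, the minimum being achieved with all $r$ exterior generators plus $(p-2)$ of each polynomial generator in that row. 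Consequently a $T'$-invariant monomial with $\ell$ engaged rows has degree $\geq \ell\cdot r(2p-3)$.

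In degrees strictly below $r(n-1)(2p-3)$, every $T'$-invariant monomial must therefore leave at least one row empty. The hardest step is ruling out $GL_{n-1}\F_{p^r}$-invariant combinations of such partially-supported monomials, by combining Weyl-group symmetry under $S_{n-1}$ with the unipotent transvections $E_{ij}(\lambda)$. My approach would be to show that applying $E_{ij}(\lambda)$ to any symmetrized candidate invariant yields, via the super-binomial expansion, new degree-preserving terms whose support strictly enlarges, and these cannot be cancelled by any monomial of the same degree because the per-row bound forces any compensating term to have strictly larger support than allowed at that degree. The ``in particular'' case $i=r(2p-3)$ for $n>2$ is the tractable special case $\ell=1$: the $T'$-invariants in this degree are spanned by the single-row classes $\alpha_{(i)} = \prod_k x_{ik}\prod_k y_{ik}^{p-2}$ for $i=1,\dots,n-1$, so the only potentially $GL_{n-1}\F_{p^r}$-invariant candidate is the symmetric sum $\sum_i \alpha_{(i)}$, and a direct computation of $E_{ij}(\lambda)\bigl(\sum_i \alpha_{(i)}\bigr)$ produces mixed-row terms of degree $r(2p-3)$ that cannot be absorbed, forcing the sum to vanish.
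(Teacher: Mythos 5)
Your approach is genuinely different from the paper's: you restrict through the full parabolic normalizer $P$ and try to show vanishing of the Mui-type invariants $H^*(E;\F_p)^{GL_{n-1}\F_{p^r}}$, whereas the paper simply factors the restriction through the much smaller intermediate subgroup
\[ \begin{bmatrix} \F_{p^{r(n-1)}}^\times & \ast \\ & 1 \end{bmatrix}, \]
obtained by embedding a Singer cycle $\F_{p^{r(n-1)}}^\times \hookrightarrow GL_{n-1}\F_{p^r}$. That subgroup is isomorphic to $\F_{p^{r(n-1)}}\rtimes\F_{p^{r(n-1)}}^\times$, so Lemma~\ref{gl2vanishingv2} (applied with $r$ replaced by $r(n-1)$) immediately gives the vanishing range $0<i<r(n-1)(2p-3)$, with no invariant theory at all. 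Interestingly, the paper flags your route in a remark right after Corollary~\ref{rootsubgroup}: when $r=1$, Mui's computation of $H^*(C_p^{n-1};\F_p)^{GL_{n-1}\F_p}$ gives an even stronger (exponential) vanishing bound, so your approach \emph{could} give more than the corollary asks for. Your reduction to $GL_{n-1}$-invariants is correct, and so is the per-row degree bound coming from $T'$-invariance and Lemma~\ref{Quillenlemma}.

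However, there is a real gap in the hardest step. After observing that any $T'$-invariant monomial of degree $< r(n-1)(2p-3)$ has at least one empty row, you need to rule out $GL_{n-1}\F_{p^r}$-invariant linear combinations of such sparse monomials. Your sketch --- that applying $E_{ij}(\lambda)$ to a symmetrized candidate produces degree-preserving terms of strictly larger support that cannot cancel --- is not established: a priori, applying a transvection to different summands can produce mixed-support terms that \emph{do} cancel against one another, and the ``per-row bound forces strictly larger support'' assertion does not by itself rule this out. Making that argument rigorous is essentially redoing (the vanishing part of) Mui's invariant-theory computation, and doing so over $\F_{p^r}$ for $r>1$, where I am not aware of a reference. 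Your explicit treatment of the $\ell=1$ case (the ``in particular'' statement) is correct and would suffice for the later applications in the paper, but it does not establish the corollary in the full stated range. You should either fill in the Mui-type argument carefully, or use the Singer-cycle factorization, which settles the whole range in one line.
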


\begin{proof}
The restriction factors through
\[ H^i\left(\begin{bmatrix} \F_{p^{r(n-1)}}^\times & \ast \\ &1 \end{bmatrix}\right) \]
whose cohomology vanishes in the given range by Lemma~\ref{gl2vanishingv2}.  Here we are regarding $\F_{p^{r(n-1)}}^\times$ as a subgroup of $GL_{n-1}\F_{p^r}$ via a choice of basis $\F_{p^{r(n-1)}}\iso\F_{p^r}^{n-1}$.
%To see the last remark, note that, when $n\geq3$, $r(n-1)(p-1)\geq r(2p-2)>r(2p-3)$.
\end{proof}

\begin{rem}
Using Lemma~\ref{edgegroup} in place of Lemma~\ref{gl2vanishingv2}, we could prove a similar vanishing range for restriction to all of the ``block $p$-tori'' of~\cite{MP}:
\begin{cor}
Restriction
\[ H^i(GL_n\F_{p^r})\to H^i\left(\begin{bmatrix} I_k & \ast \\ & I_{n-k} \end{bmatrix}\right) \]
vanishes for
\[ 0<i<r(p-1)\max\{k,n-k\}. \]
In particular, it vanishes in degree $i=r(2p-3)$ when $n>2$.
\end{cor}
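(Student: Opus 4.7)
The plan is to mimic the proof of Corollary~\ref{rightedge}, factoring the restriction through a larger edge-type subgroup on which Lemma~\ref{edgegroup} supplies the needed vanishing. The construction is symmetric in the two diagonal blocks: the large cyclic group gets placed in whichever of them is larger. I will describe the case $n-k\geq k$, in which $\max\{k,n-k\}=n-k$; the other case is handled by inserting $\F_{p^{rk}}^\times$ in the top-left block instead of the bottom-right, with a completely parallel argument.

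First I would choose an $\F_{p^r}$-linear isomorphism $\F_{p^{r(n-k)}}\iso\F_{p^r}^{n-k}$; this embeds $\F_{p^{r(n-k)}}^\times$ as a cyclic subgroup of $GL_{n-k}\F_{p^r}$. Then I form the intermediate subgroup
\[
H=\begin{bmatrix} I_k & \ast \\ & \F_{p^{r(n-k)}}^\times \end{bmatrix}\leq GL_n\F_{p^r},
\]
which contains $V=\begin{bmatrix} I_k & \ast \\ & I_{n-k}\end{bmatrix}$, so the restriction to $V$ factors through $H^*(H)$.

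The point is then to identify $H$ as an instance of the edge group of Lemma~\ref{edgegroup}. Concretely, view $V=M_{k\times(n-k)}(\F_{p^r})$ as the set of $k$-tuples of row vectors in $\F_{p^r}^{n-k}\iso\F_{p^{r(n-k)}}$. Conjugation by an element of $\F_{p^{r(n-k)}}^\times$ then acts on $V$ by simultaneous scalar multiplication on each of the $k$ rows, giving
\[
H\iso\F_{p^{r(n-k)}}^k\semidirect\F_{p^{r(n-k)}}^\times.
\]
Applying Lemma~\ref{edgegroup} with $r$ replaced by $r(n-k)$ and $n$ replaced by $k$ gives $H^i(H;\F_p)=0$ for $0<i<r(n-k)(p-1)=r(p-1)\max\{k,n-k\}$, and hence the same vanishing for the restriction to $V$.

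The hypothesis $n>2$ in the ``in particular'' assertion forces $\max\{k,n-k\}\geq2$ (otherwise one would have $k=n-k=1$), so the resulting bound is at least $2r(p-1)=r(2p-2)>r(2p-3)$, as required. No step presents a real obstacle; the main thing to verify carefully is that $\F_{p^{r(n-k)}}^\times\leq GL_{n-k}\F_{p^r}$ genuinely acts on $V$ by $\F_{p^{r(n-k)}}$-scalar multiplication (row by row), which is immediate from the chosen basis identification $\F_{p^{r(n-k)}}\iso\F_{p^r}^{n-k}$.
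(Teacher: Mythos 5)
Your proof is correct and follows exactly the approach the paper sketches in the remark: factor the restriction through a larger intermediate group obtained by replacing one identity block with a cyclic field-extension torus, identify it with an edge group of Lemma~\ref{edgegroup}, and invoke the vanishing range. The conjugation computation $X\mapsto XA^{-1}$ (or $X\mapsto AX$ in the other case) does indeed give simultaneous $\F_{p^{r\max\{k,n-k\}}}$-scalar multiplication on the rows (resp.\ columns) of $V$, so the identification $H\iso\F_{p^{r(n-k)}}^k\rtimes\F_{p^{r(n-k)}}^\times$ is correct, and the bookkeeping $\max\{k,n-k\}\geq 2$ when $n>2$ settles the final claim.
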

\end{rem}

The same vanishing result holds a fortiori for restriction to any root subgroup
\[ E_{\ell m}=\set{A\in GL_n\F_{p^r}}{A_{ij}=\delta_{ij}\mbox{ unless }
i=\ell\mbox{ and }j=m}, \]
since a conjugate of it is contained in the right edge subgroup:

\begin{cor}\label{rootsubgroup}
Restriction
\[ H^i(GL_n\F_{p^r})\to H^i(E_{\ell m}) \]
vanishes for
\[ 0<i<r(n-1)(2p-3). \]
In particular, it vanishes in degree $i=r(2p-3)$ when $n>2$.
\end{cor}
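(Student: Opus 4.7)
The plan is to exploit the Weyl group (symmetric group) symmetry that permutes all root subgroups $E_{\ell m}$ of $GL_n\F_{p^r}$ transitively, combined with the fact that inner automorphisms act trivially on cohomology. This will let us deduce the vanishing from the already-established Corollary~\ref{rightedge}.

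First I would pick a permutation $\sigma \in S_n$ with $\sigma(m) = n$, and let $P_\sigma \in GL_n\F_{p^r}$ be the associated permutation matrix. Conjugation by $P_\sigma$ carries $E_{\ell m}$ isomorphically onto $E_{\sigma(\ell),\,n}$, which sits inside the right edge subgroup
\[ R = \begin{bmatrix} I_{n-1} & * \\ & 1 \end{bmatrix}. \]

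Next I would chase the following commutative diagram: the inclusion $i_{\ell m} \colon E_{\ell m} \hookrightarrow GL_n\F_{p^r}$ factors as $i_{\sigma(\ell),n} \circ c_{P_\sigma}$ followed by the inner automorphism $c_{P_\sigma^{-1}}$ of $GL_n\F_{p^r}$. Taking cohomology and using that inner automorphisms of $GL_n\F_{p^r}$ induce the identity on $H^*(GL_n\F_{p^r})$, we see that the restriction $i_{\ell m}^*$ equals the composition
\[ H^*(GL_n\F_{p^r}) \xrightarrow{i_{\sigma(\ell),n}^*} H^*(E_{\sigma(\ell),\,n}) \xrightarrow{c_{P_\sigma}^*} H^*(E_{\ell m}), \]
with the second map an isomorphism. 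Since $E_{\sigma(\ell),n} \leq R$, the first map factors through $H^*(R)$, which vanishes in the range $0 < i < r(n-1)(2p-3)$ by Corollary~\ref{rightedge}. This gives the desired conclusion, and the final claim follows because $r(n-1)(2p-3) > r(2p-3)$ when $n > 2$.

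There is no real obstacle: the corollary is essentially a naturality/transitivity observation. The only point to verify carefully is the inner-automorphism step, which is entirely standard.
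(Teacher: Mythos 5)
Your proof is correct and is exactly the argument the paper has in mind: the paper justifies Corollary~\ref{rootsubgroup} in one line by noting that a conjugate of $E_{\ell m}$ lies in the right edge subgroup, and you have simply spelled out the conjugation-by-a-permutation-matrix and inner-automorphism details. No discrepancy.
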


\begin{rem}
At least when $r=1$, we could strengthen the vanishing bound in Corollary~\ref{rightedge} (and hence also Corollary~\ref{rootsubgroup}) to be exponential in $n$ rather than linear, by studying invariants of the full normalizer (``Dickson-Mui invariants'') instead of just invariants by the multiplicative group.
Specifically~\cite{Mui}, the invariants
$H^i(C_p^{n-1};\F_p)^{GL_{n-1}\F_p}$ vanish for
\[ 0<i<n-1+2(p-2)(1+p+\cdots+p^{n-2}). \]
(For $n>2$, this is higher than $(n-1)(2p-3)$.)
\begin{comment}
Check that the lowest (nilpotent) Mui class has lower degree than the lowest Dickson class:
%$H^i(C_p^m;\F_p)^{GL_m\F_p}$ vanish for
lowest Mui generator:
\[ m+2(p-2)(1+p+\cdots+p^{m-1}), \]
%\[ = 2(p^m-p^{m-1}-\cdots-p-2)+m < 2(p^m-p^{m-1})=2(p-1)p^{m-1} \]
lowest Dickson class \[2(p^m-p^{m-1})\]
Difference
\[ 2(p^m-p^{m-1})-m-2(p-2)(1+p+\cdots+p^{m-1}) \]
\[ = 2(p^m-p^{m-1})-m-2(p^m-1)+2(1+p+\cdots+p^{m-1}) \]
\[ = 2-m+2(1+p+\cdots+p^{m-2}) \]
\[ \geq 2-m+2(m-1)=m>0. \]
Also, notice
\[ m+2(p-2)(1+p+\cdots+p^{m-1})\geq m+2(p-2)m=m(2p-3) \]
(equality possible only if $m=1$).
Value when $m=2$ is \[ 2+2(p-2)(1+p)>2+2(p-2)(2)=4p-6. \]
\end{comment}
\end{rem}

\section{Torus-stable filtration}\label{filtration}
Recall $U_n=U_n\F_{p^r}$ is the group of unipotent upper triangular matrices in $GL_n\F_{p^r}$, and its Sylow $p$-subgroup.  Being a $p$-group, $U_n$ is of course nilpotent.  Indeed, there is a particularly nice central filtration of $U_n$ by ``superdiagonals'' which is preserved by the action of the torus $T$ (of diagonal matrices).  The idea is use the preceding theory to compare the cohomology of $U_n$ to the cohomology of the associated graded group $\gr U_n$, which is an elementary abelian $p$-group (more specifically, an $\F_{p^r}$-vector space).

For each $0\leq i\leq n-1$, define
\begin{align*}
G_k &= \set{A\in U_n}{A_{ij}=0\mbox{ when }0<j-i<n-k}\\
&= \langle E_{ij}\mid n-k\leq j-i<n\rangle.
\end{align*}
This filtration of $U_n$ is central (in fact it is the lower central series of $U_n$), and is preserved by $T$ (since the $E_{ij}$ are).  Its associated graded group
\[ \gr U_n = \prod_{i<j} E_{ij} \]
is elementary abelian.  Hence its cohomology is a polynomial algebra when $p=2$ and a polynomial tensor an exterior algebra when $p$ is odd.
In fact, $\gr U_n$ is an $\F_{p^r}$-vector space, on which $T$ acts $\F_{p^r}$-linearly.  The $E_{ij}$ are $T$-stable subspaces with $T$-weight $(t_k)\mapsto t_i/t_j$.

\section{The case $p=2$}
For this section, let $p=2$ (and take $\F_2$-cohomology).  Here we have $r(2p-3)=r$.  Recall (Proposition~\ref{gl2vanishing}) that $\dim H^r(GL_2\F_{2^r})=1$, so we just need to show $H^r(GL_n\F_{2^r})=0$ for $n>2$.  Along the way, we'll compute the lowest cohomology group $H^r(B_n)$ of the Borel subgroup
\[ B_n = \set{A\in GL_n\F_{2^r}}{A_{ij}=0\mbox{ when }i>j} \]
for all $n$.

Let $T=(\F_{2^r}^\times)^n$ be the diagonal torus, so that $B_n=U_n\rtimes T$,
and we have
\[ H^*(B_n)=H^*(U_n)^T. \]
We begin by calculating $H^r(\gr U_n)^T$, the $T$-invariant cohomology of the associated graded group $\gr U_n=\prod_{i<j}E_{ij}$ with respect to the filtration described in section~\ref{filtration}.

\begin{lemma}\label{grrootdetection}
\begin{enumerate}[(a)]
\item For all $0<d<r$,
\[ H^d(\gr U_n)^T = 0; \]
\item For all $0<d<r$ and $i<j$,
\[ H^d(E_{ij})^T = 0; \]
\item The map
\[ \bigoplus_{i<j}H^r(E_{ij})^T\to H^r(\gr U_n)^T \]
induced by the projections $\gr G\to E_{ij}$ onto the factors is surjective;
\item The restriction
\[ H^r(\gr U_n)^T\to\prod_{i<j}H^r(E_{ij})^T \]
is an isomorphism.
\end{enumerate}
\end{lemma}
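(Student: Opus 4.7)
The plan is to pass to $\F_{2^r}$ coefficients---where coefficient extension commutes with $T$-invariants, and the cohomology acquires a monomial basis of $T$-eigenvectors---and carry out a combinatorial analysis. Proposition~\ref{vectorspacecoh} yields
\[ H^*(\gr U_n;\F_{2^r})=\F_{2^r}[x_{(ij),k}\mid i<j,\ 0\leq k<r], \]
with $\deg x_{(ij),k}=1$ and with $(t_1,\ldots,t_n)\in T$ acting on $x_{(ij),k}$ as multiplication by $(t_i/t_j)^{2^k}$. Invariants are spanned by $T$-invariant monomials, so the lemma reduces to a combinatorial problem.

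The key observation is the following: let $z=\prod_\ell x_{(i_\ell j_\ell),k_\ell}^{a_\ell}$ be a nontrivial invariant monomial of degree $d$, and let $m$ be the smallest index appearing in any factor. Since $m$ is smallest, it can only appear as an upper index, so the $t_m$-component of the $T$-weight is the positive integer $\sum_{\ell:\,i_\ell=m}2^{k_\ell}a_\ell$. $T$-invariance makes it divisible by $2^r-1$, and Lemma~\ref{Quillenlemma} then gives $\sum_{\ell:\,i_\ell=m}a_\ell\geq r$; in particular $d\geq r$. This proves~(a). Part~(b) is Proposition~\ref{gl2vanishing} applied to $E_{ij}\cong\F_{2^r}$, on which $T$ acts through the surjective character $t_i/t_j$.

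For~(c) and~(d), take $d=r$. The equality case of Lemma~\ref{Quillenlemma} forces every factor of $z$ to use $m$ as its upper index, with each $a_\ell=1$ and with the $k_\ell$'s bijecting onto $\{0,\ldots,r-1\}$. Write $z=\prod_{k=0}^{r-1}x_{(m,j_k),k}$. For any other index $j>m$ that actually appears, the $t_j$-weight $-\sum_{k:\,j_k=j}2^k$ must be divisible by $2^r-1$; but its absolute value is at most $2^r-1$, so it either vanishes (and $j$ does not appear) or equals $-(2^r-1)$, in which case every $k\in\{0,\ldots,r-1\}$ occurs at $j$. Hence $j_k$ is a single constant $j$, and $z$ is pulled back from $E_{mj}$ via the projection $\gr U_n\to E_{mj}$, which proves~(c). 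Distinct pairs $(m,j)$ produce monomials in disjoint variables, and the restriction to $E_{m'j'}$ of a class pulled back from $E_{mj}$ vanishes unless $(m,j)=(m',j')$; thus the restriction in~(d) is inverse to the sum-of-pullbacks map in~(c), and both are isomorphisms.

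The heart of the argument is the combinatorial analysis at degree $r$, requiring two invocations of Lemma~\ref{Quillenlemma}: once to force a common lower index on every factor, and once more to force a common upper index. Everything else is formal.
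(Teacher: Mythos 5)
Your proof is correct, and it takes a genuinely streamlined route. The paper inducts on $n$: it reduces to the case where both $\sum_{j>1}b_{1j}>0$ and $\sum_{i<n}b_{in}>0$ (otherwise the monomial lives on a copy of $\gr U_{n-1}$), applies Lemma~\ref{Quillenlemma} to each of the row-1 and column-$n$ sums to get bounds of $r$ on each side, and then squeezes: a monomial of degree at most $r$ must have $b_{1n}=r$ and all other $b_{ij}=0$. You instead avoid the induction entirely by focusing on the smallest index $m$ appearing anywhere, observing it can only appear as the first coordinate of a pair; a single application of Lemma~\ref{Quillenlemma} to the $t_m$-weight already gives $d\geq r$, and the equality analysis pins down the $k$-indices and exponents. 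Your degree-$r$ step then needs one further argument to force the second index to be constant, which you handle by the elementary observation that a nonzero sum of distinct powers $2^0,\ldots,2^{r-1}$ divisible by $2^r-1$ must equal $2^r-1$. Both proofs land on the same structure for parts (c) and (d) (every invariant monomial in degree $r$ pulls back from a single $E_{ij}$, and the pullback/restriction maps compose to the identity). The paper's inductive, two-sided formulation has the advantage of carrying over almost verbatim to the odd-$p$ case in Lemma~\ref{grhook}, where the savings of $r$ on the exterior part genuinely requires controlling both the first-row and last-column contributions at once; your one-sided argument is cleaner for $p=2$ but does not obviously generalize there.
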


\begin{proof}
Note that each $E_{ij}\leq U_n$ is canonically isomorphic to $\F_{2^r}$, and $T$ acts on it with weight
\[ (t_1,\dots,t_n)\mapsto t_i/t_j. \]
For this proof, we take cohomology with coefficients extended to $\F_{2^r}$.  This does not affect any of the statements,\footnote{Because $T$-invariants commute with extension of the coefficient field}
and has the benefit of providing a canonical (up to scalar multiple) generating set of $T$-eigenvectors, by Proposition~\ref{vectorspacecoh}:
% (and the remarks of section~\ref{filtration}):
%\[ H^*(G;\F_q) = \bigotimes_{i<j}\bigotimes_{k=0}^{r-1} \F_q[z_{ijk}], \]
\[ H^*(\gr U_n) = \F_{2^r}[z_{ijk}\mid i<j,\ 0\leq k<r] \]
such that $T$ acts on $z_{ijk}$ with weight 
\[ (t_1,\dots,t_n)\mapsto (t_i/t_j)^{2^k}. \]
The cohomology therefore has a basis consisting of the monomials in the $z_{ijk}$'s, which are eigenvectors for the action of $T$.  
Hence the $T$-invariants are spanned by the $T$-invariant monomials. 

We claim that there are no such invariant monomials below degree $r$, and that all those in degree $r$ are pulled back from one of the $E_{ij}$ (which is to say that they contain only factors $z_{ijk}$ with one fixed $(i,j)$).
%Hence it suffices to show that, up to degree $r$, the only $T$-invariant monomials are those of the form $\prod_k z_{ijk}^{a_k}$, i.e.~those in which only one pair $i<j$ appears.  
We prove the claim by induction on $n$.  Let us suppose that $\prod_{i<j}\prod_{k}z_{ijk}^{a_{ijk}}$ is a nontrivial $T$-invariant monomial for some $a_{ijk}\geq0$.
Let $b_{ij}=\sum_ka_{ijk}$.  We may assume without loss of generality that
\[ \sum_{j>1}b_{1j}, \sum_{i<n}b_{in}>0. \]
(Otherwise, our monomial is pulled back from a copy of $\gr U_{n-1}$, and we may apply the inductive hypothesis).
By considering the action of $\F_{2^r}^\times \times1\times\cdots\times1\subset T$, we know that
\[ (2^r-1)\mid\sum_{1<j}\sum_{k=0}^{r-1} 2^ka_{1jk}\neq0, \]
whence it follows (using Lemma~\ref{Quillenlemma}) that 
\[ \sum_{1<j} b_{1j} \geq r. \]
Similarly,
\[ \sum_{i<n} b_{in} \geq r. \]
Hence if our invariant monomial has degree at most $r$, then we must have
$b_{1n}=r$, and $b_{ij}=0$ for all $(i,j)\neq(1,n)$.  In particular, its degree is precisely $r$, and it is in the image of pullback from $E_{1n}$, as desired.  This proves the claim, and establishes parts (a) and (c).  Part (b) follows from (a) since each $E_{ij}$ is a $T$-equivariant retract of $\gr U_n$.   To get part (d), notice that the composition
\[ \bigoplus_{i<j}H^r(E_{ij})^T\to H^r(\gr U_n)^T\to
    \prod_{i<j}H^r(E_{ij})^T \]
of the maps in (c) and (d) is the identity, hence both maps are isomorphisms.
\end{proof}

\begin{rem}\label{lowvanishing2}
With Corollary~\ref{tvanishing}, Lemma~\ref{grrootdetection}(a) shows that
%\begin{cor}\label{lowvanishing2}
for $0<d<r$,
\[ H^d(U_n)^T=0. \]
%\end{cor}
Of course it follows that $H^d(GL_n\F_{2^r})=H^d(B_n)=0$ in those degrees, since $U_n$ is a 2-Sylow subgroup.
This is (essentially) the argument used by Friedlander-Parshall~\cite{FP}.
% more on FP technique?
\end{rem}

Now we have all the ingredients to compute $H^r(GL_n\F_{2^r})$.
\begin{thm}\label{lowestcoh2}
We have
\[ \dim H^r(GL_n\F_{2^r};\F_2)=
\begin{cases} 1&\mbox{if }n=2,\\0&\mbox{if }n>2. \end{cases} \]
\end{thm}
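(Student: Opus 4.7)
The case $n=2$ is immediate from Proposition~\ref{gl2vanishing}, so the plan focuses on showing $H^r(GL_n\F_{2^r}) = 0$ for $n > 2$. The strategy is to combine the detection machinery of this chapter with the vanishing of restriction to root subgroups established in Corollary~\ref{rootsubgroup}. Concretely, I will show that restriction to the family of root subgroups $\{E_{ij}\}_{i<j}$ is injective on $H^r(U_n)^T$, and then observe that each of those restrictions kills the image of $H^r(GL_n\F_{2^r})$ once $n > 2$.

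For the first step, I would apply the Equivariant Detection Theorem (Theorem~\ref{tdetectionthm}) to $U_n$ with the superdiagonal central filtration of section~\ref{filtration}, the diagonal torus $T$ acting, degree $d = r$, and subgroups $\{E_{ij}\}_{i<j}$. The hypothesis $H^{r-1}(\gr E_{ij})^T = 0$ holds by Lemma~\ref{grrootdetection}(b) (and the $d=1$ clause of the theorem covers the degenerate case $r=1$). Combined with Lemma~\ref{grrootdetection}(d), which asserts that the associated graded map
\[ H^r(\gr U_n)^T \xrightarrow{\sim} \prod_{i<j} H^r(E_{ij})^T \]
is already an isomorphism, the detection theorem gives that
\[ H^r(U_n)^T \to \prod_{i<j} H^r(E_{ij})^T \]
has trivial kernel, i.e., is injective.

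For the second step, restrict from $GL_n\F_{2^r}$ to $B_n = U_n\semidirect T$; since $B_n$ contains a Sylow $2$-subgroup and $|T|$ is odd, we get an embedding $H^r(GL_n\F_{2^r}) \hookrightarrow H^r(B_n) = H^r(U_n)^T$. For each $i<j$, the restriction $H^r(GL_n\F_{2^r}) \to H^r(E_{ij})$ vanishes by Corollary~\ref{rootsubgroup}, whose bound reads $r(n-1)(2p-3) = r(n-1) > r$ when $n > 2$ and $p=2$. Since $H^r(E_{ij})^T \hookrightarrow H^r(E_{ij})$, the composite
\[ H^r(GL_n\F_{2^r}) \hookrightarrow H^r(U_n)^T \hookrightarrow \prod_{i<j} H^r(E_{ij})^T \]
is zero; as a composition of injections, this forces $H^r(GL_n\F_{2^r}) = 0$.

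I do not anticipate any real obstacle: the proof is essentially an assembly of already-proven ingredients. The only conceptual point worth noting is that the detection theorem is exactly the mechanism that lets a vanishing statement about the restriction to the (abelian) subgroups $E_{ij}$ propagate from the elementary abelian group $\gr U_n$ up to $U_n$ itself.
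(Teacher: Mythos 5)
Your proposal is correct and follows essentially the same route as the paper: both invoke Lemma~\ref{grrootdetection}(b) to verify the hypotheses of Theorem~\ref{tdetectionthm}, use Lemma~\ref{grrootdetection}(d) to get injectivity of $H^r(U_n)^T\to\prod_{i<j}H^r(E_{ij})^T$, and then conclude from Corollary~\ref{rootsubgroup} that the restriction to every $E_{ij}$ is zero for $n>2$, forcing $H^r(GL_n\F_{2^r};\F_2)=0$. The only difference is that you spell out the intermediate step $H^r(GL_n\F_{2^r})\hookrightarrow H^r(B_n)=H^r(U_n)^T$ explicitly, which the paper leaves implicit.
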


\begin{proof}
For the case $n=2$, see Proposition~\ref{gl2vanishing}.
Lemma~\ref{grrootdetection}(b,d) shows that the hypotheses of Theorem~\ref{tdetectionthm} are satisfied, establishing that
\[ H^r(U_n)^T\to\prod_{i<j}H^r(E_{ij})^T \]
is injective.
It follows that restriction
\[ H^r(GL_n\F_{2^r})\to\prod_{i<j}H^r(E_{ij}) \]
is injective.
However, when $n>2$, restriction to each root subgroup vanishes by Corollary~\ref{rootsubgroup}, and this map is zero; consequently, $H^r(GL_n\F_{2^r})=0$.
\end{proof}

We have now completed our main objective in the case $p=2$; however, with just a bit more work, we can also calculate the lowest nonzero
% (thanks to~\cite{FP})
cohomology group $H^r(B_n)$ of the Borel subgroup.  We will express the degree $r$ cohomology of $B_n$ in terms of the cohomology of $B_2$, which embeds into $B_n$ as a retract in several ways.

Recall that $H^d(B_n\F_{2^r};\F_2)=0$ for $0<d<r$ (\cite{FP}; also see Remark ~\ref{lowvanishing2}).  In the lowest nonvanishing degree $r$, we have:
\begin{thm}\label{Bnlowest}
%\begin{enumerate}[(a)]
%\item For $0<d<r$, $H^d(B_n\F_{2^r};\F_2)=0$.
The restriction
\[ H^r(B_n\F_{2^r};\F_2)\to\prod_{k=1}^{n-1} H^r(B_2\F_{2^r};\F_2) \]
is an isomorphism, where the map to the $k$th factor is restriction to the block diagonal subgroup
\[ \mat{3}{I_{k-1}&0&0 \\ &B_2&0 \\ &&I_{n-k-1}}. \]
In particular,
\[ \dim H^r(B_n\F_{2^r};\F_2)=n-1. \]
%\end{enumerate}
\end{thm}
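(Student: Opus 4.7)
The plan is to induct on $n$, with the base case $n=2$ handled by Proposition~\ref{gl2vanishing}. I will produce $n-1$ explicit classes realizing surjectivity, then bound the dimension from above via a Lyndon--Hochschild--Serre argument; the two bounds together force the isomorphism.

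For surjectivity, for each $k\in\{1,\ldots,n-1\}$ the assignment of the $2\times 2$ submatrix in rows and columns $k, k+1$ defines a homomorphism $\pi_k\colon B_n\to B_2$ (for upper triangular matrices, the submatrix of a product equals the product of submatrices), which retracts the block-diagonal embedding $i_k\colon B_2\hookrightarrow B_n$. Setting $\alpha_k=\pi_k^*(\mbox{generator of }H^r(B_2))$, a direct check shows that $\pi_k\circ i_j$ is the identity when $j=k$ and factors through the diagonal torus $T_2\leq B_2$ when $j\neq k$. Since $H^r(T_2;\F_2)=0$, this gives $i_j^*\alpha_k=\delta_{jk}\cdot(\mbox{gen})$, so the restriction is surjective and in particular $\dim H^r(B_n)\geq n-1$.

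For the upper bound I would use the split extension $K\to B_n\to B_{n-1}$, where $K\cong\F_{2^r}^{n-1}\semidirect\F_{2^r}^\times$ is the ``last column'' subgroup of matrices equal to the identity on the top-left $(n-1)\times(n-1)$ block (with $B_{n-1}$ sitting inside $B_n$ as the top-left block). By Lemma~\ref{edgegroup}, $H^q(K;\F_2)=0$ for $0<q<r$, so the only Lyndon--Hochschild--Serre terms $E_2^{p,q}$ with $p+q=r$ are $E_2^{r,0}=H^r(B_{n-1};\F_2)$ (of dimension $n-2$ by induction) and $E_2^{0,r}=H^r(K;\F_2)^{B_{n-1}}$. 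It therefore suffices to prove $\dim H^r(K)^{B_{n-1}}\leq 1$.

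The computation of this invariant is the main obstacle. Extending coefficients to $\F_{2^r}$, Proposition~\ref{vectorspacecoh} gives $H^r(K;\F_{2^r})=H^r(\F_{2^r}^{n-1};\F_{2^r})^{\F_{2^r}^\times}$, and the weight analysis of Lemma~\ref{grrootdetection} (via Lemma~\ref{Quillenlemma}) identifies the $T_{n-1}$-invariants as the $(n-1)$-dimensional span of the monomials $e_i:=\prod_{\ell=0}^{r-1}x_{i,\ell}$ for $i=1,\ldots,n-1$. To rule out all but $e_{n-1}$, I would show that a unipotent root element $u=I+\beta E_{a,b}\in U_{n-1}$ with $a<b$ and $\beta\neq 0$ acts on $e_a$ by expanding $u\cdot e_a=\prod_\ell(x_{a,\ell}+\beta^{2^\ell}x_{b,\ell})$, whose full-subset term produces a nonzero $e_b$ in the $T$-invariant part of $u\cdot e_a$. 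Equating $T$-invariant parts of $u\cdot v=v$ for $v=\sum c_ie_i$ then forces $c_a=0$ whenever some $b>a$ exists in $\{1,\ldots,n-1\}$, i.e., for every $a<n-1$. Combining this with the spectral sequence bound gives $\dim H^r(B_n;\F_2)\leq(n-2)+1=n-1$, matching the lower bound and completing the identification.
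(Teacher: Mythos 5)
Your proof is correct, but it takes a genuinely different route from the paper's. The paper reduces to $T$-invariant cohomology of $U_n$ and reuses the associated-graded detection framework (Theorem~\ref{tdetectionthm}) already built in the chapter: it first gets injectivity of the restriction $H^r(U_n)^T\to\prod_{i<j}H^r(E_{ij})^T$ from the proof of Theorem~\ref{lowestcoh2}, then shows by a retraction argument that the image lies in (and covers) the simple-root factors $E_{k,k+1}$, and finally shows restriction to non-adjacent $E_{ij}$ vanishes by reducing to $U_3$ and invoking a new lemma about extensions with $Z\leq Z(G)\cap[G,G]$ together with the perfect-squares observation (Lemma~\ref{noperfectsquares}). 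You instead bound $\dim H^r(B_n;\F_2)$ from above by induction using the Lyndon--Hochschild--Serre spectral sequence of the split extension $K\to B_n\to B_{n-1}$ with $K$ the ``last column'' subgroup, which by Lemma~\ref{edgegroup} leaves only the two corners on the $r$-line, reducing the problem to showing $\dim H^r(K)^{B_{n-1}}\leq 1$; your weight computation of that invariant space (via the $T_{n-1}\times\F_{2^r}^\times$-eigenbasis and the action of the unipotent root elements) is valid and nicely direct. Your surjectivity step via the projections $\pi_k\colon B_n\to B_2$ onto the $2\times2$ principal submatrices is essentially the same retraction idea as the paper's, packaged at the level of $B_2$ rather than the root subgroup $X_k$.

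What each route buys: the paper's approach fits the chapter's architecture and exports Lemma~\ref{centercommutator} (a fact of independent interest about extraspecial-like extensions), while yours is more self-contained and avoids both that lemma and Lemma~\ref{noperfectsquares} entirely. A small caution in your write-up: your claim that the $T_{n-1}\times\F_{2^r}^\times$-invariant monomials in degree $r$ are exactly $e_1,\dots,e_{n-1}$ is correct, but it should be derived directly from Lemma~\ref{Quillenlemma} (as the paper does in analogous spots) rather than by citing Lemma~\ref{grrootdetection}, since that lemma addresses the different setup of $\gr U_n$ with the full torus $T_n$; the argument is the same but the setting is not literally the one covered there. Also note that one need not verify that the span of the $e_i$ is $U_{n-1}$-stable (it isn't, because of the cross terms); you are simply intersecting two invariance conditions, and the argument you give handles this correctly.
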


\begin{proof}
Since $U_n$ is the Sylow 2-subgroup of $B_n=U_n\semidirect T$, it suffices to show that
% $H^d(U_n)^T=0$ for $0<d<r$, and that the restriction
the restriction
\[ H^r(U_n)^T\to\prod_{k=1}^{n-1}H^r(E_{k,k+1})^T \]
is an isomorphism.
As in the proof of Theorem~\ref{lowestcoh2}, we have that
\[ H^r(U_n)^T\to\prod_{i<j}H^r(E_{ij})^T \]
is injective.
To complete the description, we need to refine this information by showing that the image of the map is precisely
$\prod_{j=i+1} H^d(E_{ij})^T$.  That will follow from two facts:
\begin{enumerate}[(i)]
\item Restriction $\displaystyle H^r(U_n)^T\to\prod_{k=1}^{n-1}H^r(E_{k,k+1})^T$ is surjective;
\item Restriction $\displaystyle H^r(U_n)^T\to H^r(E_{ij})^T$ is trivial for $j>i+1$.
\end{enumerate}

The first statement holds because 
%\[ H^*(E_{k,k+1})^T=
%H^*\left(\begin{bmatrix}I_{k-1}&0&0 \\ &B_2&0 \\ %&&I_{n-k-1}\end{bmatrix}\right), \]
%and the latter subgroup is a retract of $B_n$.
the $E_{k,k+1}$ are $T$-equivariant retracts of $U_n$, and intersect trivially, so that
the composition
\[ \bigoplus_{k=1}^{n-1}H^r(E_{k,k+1})^T\to %\xrightarrow{\sum r_k^*}
H^r(U_n)^T\to\prod_{k=1}^{n-1}H^r(E_{k,k+1})^T \]
equals the identity, and the second map must be surjective.\footnote{Warning: we are only claiming that each individual $E_{k,k+1}\leq U_n$ is a retract.  Their product is not even a subgroup, as they do not normalize one another.}

For statement (ii), by factoring the restriction through an appropriately chosen embedding of $U_3$ in $U_n$, we reduce to the case $n=3$ and $(i,j)=(1,3)$.  This case can be handled using the following Lemma~\ref{centercommutator}.
% regarding extensions by an elementary abelian $p$-group contained in the center and commutator subgroup.
It applies to $E_{13}\leq U_3$ (since $E_{13}$ is both the center and commutator), and shows that the image
$\im(H^r(U_3)\to H^r(E_{13}))$
is contained in the subalgebra of perfect squares in $H^*(E_{13})$.
Because $H^r(\F_{2^r};\F_2)^{\F_{2^r}^\times}$ trivially intersects the subalgebra of perfect squares (Lemma~\ref{noperfectsquares}), this means that
\[ H^r(E_{13})^T\cap\im\left(H^*(U_3)\to H^*(E_{13})\right)=0 \]
as desired.
\end{proof}

The following lemma regarding extensions by an elementary abelian $p$-group contained in the center and commutator subgroup may have independent interest.
\begin{lemma}\label{centercommutator}
Let $Z\xrightarrow{i} G\xrightarrow{\pi} B$ be an extension of finite groups with
%$Z\iso C_p^r$
$Z$ an elementary $p$-group
and $Z\leq Z(G)\cap[G,G]$.  Then
\[ \im(i^*)\subset P, \]
where $P\subset H^*(Z;\F_p)$ is the subalgebra generated by Bocksteins $\beta H^1(Z;\F_p)$.  (When $p=2$, this coincides with the set of perfect squares.)
\end{lemma}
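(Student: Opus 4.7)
The plan is to apply the Lyndon--Hochschild--Serre spectral sequence of the central extension $Z \to G \to B$. Since $Z$ is central, the action of $B$ on $H^*(Z;\F_p)$ is trivial, so $E_2^{p,q} = H^p(B;\F_p) \otimes H^q(Z;\F_p)$, and the sequence abuts to $H^*(G;\F_p)$. The image $\im(i^*)$ coincides with the space of permanent cycles on the vertical edge $E_2^{0,*}$, so in particular $\im(i^*) \subseteq \ker(d_2|_{E_2^{0,*}})$. It will therefore suffice to show that this $d_2$-kernel already lies in $P$.

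First I would establish that the transgression $d_2: H^1(Z;\F_p) \to H^2(B;\F_p)$ is injective. By the five-term exact sequence its kernel equals the image of the restriction $H^1(G;\F_p) \to H^1(Z;\F_p)$; but every homomorphism $G \to \F_p$ factors through $G/[G,G]$ and hence vanishes on $Z \subseteq [G,G]$, so this restriction is zero. Fix a basis $x_1,\ldots,x_n$ of $H^1(Z;\F_p)$ and put $c_i = d_2(x_i) \in H^2(B;\F_p)$, so that $c_1,\ldots,c_n$ are $\F_p$-linearly independent.

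Now I would use the fact that $H^*(Z;\F_p)$ is a free module over $P$ with basis the square-free monomials $\{x^I : I \subseteq \{1,\ldots,n\}\}$: for $p$ odd this is the standard decomposition $\Lambda(x_1,\ldots,x_n) \otimes \F_p[y_1,\ldots,y_n]$ with $y_i = \beta x_i$, and for $p=2$ it follows from $x_i^2 = \Sq^1 x_i \in P$. The critical sub-step is that $d_2$ vanishes on $P$: for $p=2$ this is immediate since $d_2$ is a graded derivation and $d_2(x_i^2) = 2x_i c_i = 0$ in characteristic two, while for $p$ odd it follows from Kudo's transgression theorem, which says $y_i$ is transgressive with $d_3(y_i) = \beta c_i$ (so a fortiori $d_2(y_i) = 0$). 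Consequently $d_2$ is $P$-linear on $E_2^{0,*}$, and the derivation property gives $d_2(x^I) = \sum_{j \in I} \varepsilon_{I,j}\, c_j\, x^{I \setminus \{j\}}$. Writing $z = \sum_I f_I x^I$ with $f_I \in P$ and collecting coefficients of $x^J$ in the free $H^*(B;\F_p) \otimes P$-module $E_2^{2,*}$, the equation $d_2(z) = 0$ becomes $\sum_{j \notin J} \pm c_j \otimes f_{J \cup \{j\}} = 0$ in $H^2(B;\F_p) \otimes P$ for every $J$. Since the $c_j$ are $\F_p$-linearly independent in $H^2(B;\F_p)$, expanding $f_{J \cup \{j\}}$ in any $\F_p$-basis of $P$ forces $f_{J \cup \{j\}} = 0$; hence $f_I = 0$ for all nonempty $I$ and $z = f_\emptyset \in P$. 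The step most likely to require care is the vanishing of $d_2$ on $P$ (equivalently, that Bockstein images survive past $E_2$); once that is in place, the rest is linear algebra over $\F_p$ driven by the linear independence of the $c_j$.
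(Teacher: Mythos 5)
Your proposal is correct and follows essentially the same route as the paper's proof: establish injectivity of the transgression $d_2$ on $H^1(Z)$ using $Z\leq[G,G]$, invoke Kudo's transgression theorem (or, as you note for $p=2$, the derivation property directly) to see that $P\subset\ker d_2$, use the freeness of $H^*(Z;\F_p)$ over $P$ on square-free monomials, and exploit the $\F_p$-linear independence of the transgressed classes to force the kernel of $d_2$ on $E_2^{0,*}$ down to $P$. The only cosmetic differences are that you phrase the $H^1$ step via the five-term sequence rather than via surjectivity of $\pi^*$ on $H^1$, and you give a slightly shorter argument at $p=2$; the substance is identical.
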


\begin{proof}
We take coefficients in $\F_p$, and omit them from the notation.
Because $Z\leq[G,G]$, we know $\pi^*:H^1(B)\to H^1(G)$ is surjective.  It follows that $i^*:H^1(G)\to H^1(Z)$ is zero.  Hence the differential $d_2:H^1(Z)\to H^2(B)$ is injective.  Choose a basis $x_1,\dots,x_n$ for $H^1(Z)$, and let $y_i=\beta x_i$.  Then if $p$ is odd,
\[ H^*(Z) = \F_p\langle x_1,\dots,x_n\rangle\otimes\F_p[y_1,\dots,y_n], \]
while if $p=2$, $y_i=x_i^2$ and
\[ H^*(Z) = \F_2[x_1,\dots,x_n]. \]
We know that $z_i=d_2(x_i)$ are linearly independent in $H^2(B)$.  By the Kudo transgression theorem we know that the $y_i$ are transgressive, so 
\[ P=\F_p[y_1,\dots,y_n]\subset\ker(d_2)\cap E_2^{0*}. \]
We would like to show equality.  First, notice that $H^*(Z)$ is free over $P$ on the $2^n$ generators
\[ \set{X_S=\prod_{i\in S}x_i}{S\subset[n]}. \]
We calculate
\[ d_2(X_S) = \sum_{i\in S} \pm X_{S-i}\otimes z_i\in H^*(Z)\otimes H^2(B). \]
We would like to show that, with the exception of $d_2(X_\emptyset)=0$, these are linearly independent over $P$.  This is the case because the $X_T\otimes z_i$ are all linearly independent over $P$ (since the $X_T$ are independent over $P$ and the $z_i$ are independent over $\F_p$), and each such term occurs in only one such differential.
We can now conclude that
\[ \im(i^*)=E_\infty^{0*}\subset E_3^{0*}=P. \qedhere \]
\end{proof}

\section{Other Lie types when $p=2$}\label{char2lietype}
Before moving on to discuss $GL_n\F_{p^r}$ with $p$ odd, we digress to show that, when $p=2$, the Friedlander-Parshall vanishing range approach can be readily extended to give a vanishing range in other Lie types as well.
% I believe the results of this section to be new, since Hiller~\cite{Hiller} and Bendel-Nakano-Pillen~\cite{BNP,BNP2} both excluded\footnote{All of the Bendel-Nakano-Pillen results assume, at minimum, that $p$ is greater than the Coxeter number, which can never occur in a positive rank group when $p=2$.}
%the prime $p=2$.
Compare with the results of~\cite[sec.~7]{BNP3} and~\cite{Hiller}.

\begin{prop}\label{adjointchar2}
Let $G$ be a finite group of Lie type over $\F_{2^r}$.  Assume that $G$ has adjoint type and that its root system has no component of type $E_8$, $F_4$, or $G_2$.  Then
\[ H^i(G;\F_2)=0\mbox{ for } 0<i<r. \]
\end{prop}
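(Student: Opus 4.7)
The plan is to mimic the $GL_n$ argument of this chapter, adapted to the root-system combinatorics of general reductive groups. Since $B$ contains the Sylow $2$-subgroup $U$, restriction $H^*(G;\F_2)\to H^*(B;\F_2)=H^*(U;\F_2)^T$ is injective, so it suffices to show $H^i(U;\F_2)^T=0$ for $0<i<r$. Applying Corollary~\ref{tvanishing} to the central filtration of $U$ by root heights (whose associated graded is $\gr U=\bigoplus_{\alpha\in\Phi^+}X_\alpha$, an $\F_{2^r}$-vector space on which $T$ acts $\F_{2^r}$-linearly with weight $\alpha$ on $X_\alpha$), this in turn reduces to showing $H^i(\gr U;\F_2)^T=0$ in the same range.

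After extending coefficients to $\F_{2^r}$ (which commutes with $T$-invariants), Proposition~\ref{vectorspacecoh} identifies $H^*(\gr U;\F_{2^r})$ with the polynomial algebra $\F_{2^r}[z_{\alpha,k}\mid\alpha\in\Phi^+,\,0\le k<r]$, where $T$ acts on $z_{\alpha,k}$ through the character $\alpha^{2^k}$. The invariants are spanned by invariant monomials, and since $G$ is adjoint with $\chi(\bar T)=\bbZ\Phi$, a monomial $\prod z_{\alpha,k}^{a_{\alpha,k}}$ is $T$-invariant iff, for every simple root $\alpha_s$,
\[ (2^r-1)\mid\sum_{\alpha,k} 2^k a_{\alpha,k} c_s(\alpha), \]
where $c_s(\alpha)$ denotes the $\alpha_s$-coefficient in the simple-root expansion of $\alpha$. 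The goal is then to show every nontrivial such monomial has total degree at least $r$.

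The key combinatorial observation is that for every irreducible root system $\Phi$ not of type $E_8$, $F_4$, or $G_2$, there exists a simple root $\alpha_{s_0}$ with $c_{s_0}(\alpha)\in\{0,1\}$ for all $\alpha\in\Phi^+$; equivalently, one of the Dynkin labels of the highest root is $1$. This is visible from the Bourbaki tables: take an end node in $A_n$, the first node in $B_n$, the last in $C_n$, an end node in $D_n$ or $E_6$, or the length-$1$ tail $\alpha_7$ in $E_7$. Moreover, in each of these cases, removing $\alpha_{s_0}$ yields a sub-root system $\Phi'$ still free of $E_8$, $F_4$, $G_2$ components.

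With this in hand, I would induct on the rank of $\Phi$ (reducing to the irreducible case by factoring $\gr U$ along root-system components, which splits both the cohomology and the $T$-action). Given a nontrivial invariant monomial $m$: if some $a_{\alpha,k}>0$ with $c_{s_0}(\alpha)>0$, the integers $b_k=\sum_\alpha c_{s_0}(\alpha)\, a_{\alpha,k}$ satisfy $(2^r-1)\mid\sum_k 2^k b_k$ and are not all zero, so Lemma~\ref{Quillenlemma} gives $\sum_k b_k\ge r$; because $c_{s_0}(\alpha)\le 1$, we have $\sum_k b_k\le\deg(m)$. Otherwise $m$ is supported on $\Phi'$, and a short check that $(2^r-1)\bbZ\Phi\cap\bbZ\Phi'=(2^r-1)\bbZ\Phi'$ (using that $\bbZ\Phi=\bbZ\Phi'\oplus\bbZ\alpha_{s_0}$) shows the invariance condition on $m$ matches that for the adjoint group with root system $\Phi'$, so the induction hypothesis applies. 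The main obstacle, and the reason for the three exclusions, is that in $E_8$, $F_4$, $G_2$ every simple root appears with coefficient at least $2$ in the highest root, so no nice $\alpha_{s_0}$ exists and the Quillen-lemma bound degrades by a multiplicative factor.
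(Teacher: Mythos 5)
Your proposal is correct and follows essentially the same strategy as the paper's proof: reduce via Corollary~\ref{tvanishing} to computing $H^i(\gr U;\F_{2^r})^T$, use adjoint type to produce the torus elements $t_s$ that isolate individual simple-root coefficients, invoke Lemma~\ref{Quillenlemma}, and exploit the fact that outside $E_8$, $F_4$, $G_2$ the highest root has a coefficient-$1$ node. The only difference is cosmetic: you fix a ``miniscule'' simple root $\alpha_{s_0}$ in advance and induct on rank, whereas the paper inducts on subsets $J\subseteq S$ and chooses the coweight $\alpha_s^*$ only after seeing which coefficients $b_s$ are positive.
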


\begin{proof}
Using Corollary~\ref{tvanishing} just as before, it suffices to prove that
\[ H^i(\gr U;\F_{2^r})^T=0\mbox{ for } 0<i<r, \]
where $U$ is a Sylow 2-subgroup, filtered by root height, and $T$ the $\F_{2^r}$-split maximal torus that normalizes it.
We resume all the notation from section~\ref{notationlietype}.

Let
%$\chi^*=\hom(\bar T,\bar\F_2^\times)$ and 
$\chi_*=\hom(\bar\F_2^\times,\bar T)$, the cocharacter group of the ambient algebraic torus, a free abelian group.  Because $\bar T$ is maximally $\F_{2^r}$-split,
% (terminology correct?)
every cocharacter $\bar\F_2^\times\to\bar T$ restricts to a homomorphism $\F_{2^r}^\times\to T$.
%This part is good, but unnecessary:
% and we thereby obtain a restriction map
%\[ \chi_* \to \hom(\F_{2^r}^\times, T)\iso \chi_*/(2^r-1)\chi_* \]
%which coincides with reduction modulo $2^r-1$.
%The inclusion $\bbZ\Phi\to\chi^*$ of the root lattice into the character group dualizes to a map
%\begin{align}\label{cocharactercoweight} \bar %f:\chi_*\to\Lambda^\vee=(\bbZ\Phi)^*.\end{align}
The adjoint type assumption means that the homomorphism
\[ \chi_*\to\Lambda^\vee=(\bbZ\Phi)^* \]
from the cocharacter lattice into the coweight lattice is surjective.
%%In particular, the fundamental weights $(\omega_s^\vee)_{s\in S}$ (the elements of the dual basis to the simple root basis $(\alpha_s)_{s\ins S}$ of $\bbZ\Phi$) are all realized by cocharacters.
%%Explicitly, for each $s\in S$, there is a cocharacter
%%warning! wrong
%%$x_s:\bar T\to\bar F_2^\times$ with the property that
%%\[ x_s\circ\alpha_{s'}=\begin{cases} \mathop{id} &\mbox{if }s=s'\\
%%1 &\mbox{if }s\neq s' \end{cases} \]
%Reducing modulo $2^r-1$, we also have an isomorphism
%\begin{align}\label{cocharactercoweightmod} f:\hom(\F_{2^r}^\times, T)&\to
%\Lambda^\vee/(2^r-1)\Lambda^\vee \\
%&=\hom(\bbZ\Phi/(2^r-1)\bbZ\Phi,\bbZ/(2^r-1)).\end{align}
%This means that, for each $s\in S$, there is a map
%$x_s:\F_{2^r}^\times\to T$ with the property that
%\[ \alpha_{s'}\circ x_s=\begin{cases} \id_{\F_{2^r}^\times} &\mbox{if }s=s',\\
%1 &\mbox{if }s\neq s' \end{cases}. \]
This means that, for each $s\in S$, there is a map
$x_s:\bar\F_2^\times\to\bar T$ with the property that
\[ \alpha_{s'}\circ x_s=\begin{cases} \id_{\bar\F_2^\times} &\mbox{if }s=s',\\
1 &\mbox{if }s\neq s'. \end{cases} \]
In particular (choosing a generator for $\F_{2^r}^\times\leq\bar\F_2^\times$), there is an element $t_s\in T$ such that $\alpha_s(t_s)$ has order $2^r-1$, and $\alpha_{s'}(t_s)=1$ for all $s'\neq s$.

\begin{comment}
Now, $T$ acts on the $\F_{2^r}$-vector space
\[ \gr U = \oplus_{\alpha\in\Phi^+}U_\alpha \]
diagonally and linearly. 
%via a homomorphism
%\[ f:\chi_*\to\hom(\bbZ\Phi,\F_{2^r}^\times) \]
This action has the property that the weights are additive, in the sense that
\[ f_\alpha\cdot f_\beta=f_{\alpha_\beta} \] when $\alpha,\beta,\alpha+\beta\in\Phi^+$,
where $f_\alpha:T\to\F_{2^r}^\times$
is the homomorphism determined by the action on $U_\alpha$.
Such an additive action determines a homomorphism

with each $U_\alpha\iso\F_q$.  Then any homomorphism
\[ f:\bbZ\Phi\to\chi(T) \]
induces a diagonal, $\F_q$-linear action of $T$ on $H$.

(Notice that in a finite group of Lie type, the action of the maximal torus $T$ on $H=\gr U$ has this form, where $f$ is the inclusion map.)
\end{comment}

\begin{comment}
{Proposition}{ Let $\Phi$ be a root system with no component of type $E_8$, $F_4$, or $G_2$. Let $T$ and $H$ be as above, and let $T$ act on $H$ via a homomorphism $f:\bbZ\Phi\to\chi(T)$.  If $f$ is split injective, then $(H^iH)^T=0$ for $0<i<r$. }

We prove the claim by induction on the rank of the root system (with the empty root system as base case).  Now, we have
\[ H^*H=\F_q[z_{\alpha k}|\alpha\in\Phi^+, 0\leq k<r], \]
where $T$ acts on $z_{\alpha k}$ with weight $2^kf(\alpha)$.  We will henceforth identify $\Phi$ with its image under $f$ for notational convenience.
\end{comment}

Now, for any $J\subset S$, define
\[ \Phi_J=\Phi\cap\langle\alpha_s\mid s\in J\rangle, \]
a parabolic sub-root system of $\Phi$.
We'll show by induction on the size of $J$ that
\[ H^i(V_J;\F_{2^r})^T=0 \] for $0<i<r$, where
\[ V_J=\bigoplus_{\alpha\in\Phi_J^+} U_\alpha \]
($U_\alpha$ being one-dimensional over $\F_{2^r}$ with $T$-weight $\alpha$).
Since $V_S=\gr U$, this will suffice.

As before (using Proposition~\ref{vectorspacecoh}), we have
\[ H^*(V_J;\F_{2^r})=\F_{2^r}[z_{\alpha k}\mid\alpha\in\Phi_J^+, 0\leq k<r], \]
where $T$ acts on $z_{\alpha k}$ with weight $2^k\alpha$.
The monomials
\[ z=\prod_{\alpha,k}z_{\alpha k}^{a_{\alpha k}} \]
form a $T$-eigenbasis for $H^*(V_J;\F_2)$.  So our task is to prove that any nontrivial $T$-invariant monomial must have degree
\[\deg(z)=\sum_{\alpha,k}a_{\alpha k} \]
at least $r$.

Accordingly, suppose that $z$ is such a nontrivial $T$-invariant monomial.  For each $s\in J$, let
\[ b_s=\sum_{\alpha\in\Phi_J^+}\sum_{k=0}^{r-1}\alpha_s^*(\alpha)a_{\alpha k}, \]
a nonnegative integer.  Here $\alpha_s^*\in(\bbZ\Phi)^*$ are the fundamental coweights, i.e.~the dual basis to $(\alpha_s)_{s\in S}$.
%We now use the induction hypothesis to reduce to the case where each $b_s>0$.

Suppose first that not all $b_s$ are positive.  Then set
\begin{align*} J'&=\set{s\in J}{b_s>0}\\
&=\set{s\in J}{a_{\alpha k}>0\mbox{ for some }\alpha,k\mbox{ with }\alpha_s\leq\alpha}. \end{align*}
Observe that $a_{\alpha k}=0$ for $\alpha\notin\Phi_{J'}$.  That is, $z$ is in fact a $T$-invariant monomial on $V_{J'}$.  Applying the induction hypothesis, we conclude that $\deg(z)\geq r$ as desired.

\begin{comment}
and write
\[ \Phi_J=\Phi\cap\langle\alpha_s|s\in J\rangle_\bbZ, \]
a proper parabolic sub-root-system of $\Phi$. 
Then we have $\alpha\in\Phi_J$ whenever $a_{\alpha k}>0$.  Hence $z$ is also a nontrivial $T$-invariant monomial on the smaller product
\[ \oplus_{\alpha\in\Phi_J^+}U_\alpha. \]
But $\Phi_J$ and $f|_{\bbZ\Phi_J}$ satisfy the hypotheses of the proposition: the class of Dynkin diagrams with no component of type $E_8$, $F_4$, or $G_2$ is closed under taking subdiagrams; also, the inclusion $\bbZ\Phi_J\to\bbZ\Phi$ is split injective.  Therefore, $\deg(z)\leq r$ as desired.
\end{comment}

Now we treat the case where all $b_s>0$.  Choose any irreducible component of $\Phi_J$ and let $\alpha_0$ be its highest root.  Since the class of Dynkin diagrams with no component of type $E_8$, $F_4$, or $G_2$ is closed under taking subdiagrams, our component is not of the three excluded types.  So, there exists~\cite[Tables I-IX, item (IV)]{Bourbaki} an $s\in J$ such that
\[ \alpha_s^*(\alpha_0)=1. \]
%(Some authors refer to $s$ as a minuscule node of the diagram.)
Consequently, $\alpha_s^*(\alpha)\in\{0,1\}$ for all $\alpha\in\Phi_J^+$.

Consider the action of the corresponding element $t_s\in T$ constructed above.
%Now recall that $\alpha_s^*\in(\bbZ\Phi)^*$ is realized by a map
%$F_{2^r}^\times\to T$.  Explicitly,
Since $t_s$ fixes our monomial
\[ z= \prod_{\alpha\in\Phi_J^+}\prod_{k=0}^{r-1}z_{\alpha k}^{a_{\alpha k}}, \]
we have
\begin{align*}
1&=\prod_{\alpha,k}\alpha(t_s)^{2^ka_{\alpha k}} \\
&=\prod_{\alpha,k}\prod_{s'\in J}\alpha_{s'}(t_s)^{2^k\alpha_{s'}^*(\alpha)a_{\alpha k}} \\
%&=\prod_{\alpha,k}\alpha_s(t_s)^{2^k\alpha_s^*(\alpha)a_{\alpha k}} \\
&=\alpha_s(t_s)^{\sum_{\alpha,k}2^k\alpha_s^*(\alpha)a_{\alpha k}}.
\end{align*}
Because $\alpha_s(t_s)$ has order $2^r-1$, this shows
\[ (2^r-1)\mid\sum_{\alpha\in\Phi_J^+}\sum_{k=0}^{r-1}2^k\alpha_s^*(\alpha)a_{\alpha k}, \]
which is nonzero because $b_s>0$.
From this it follows (using Lemma~\ref{Quillenlemma}) that
\[ r\leq\sum_{\alpha,k}\alpha_s^*(\alpha)a_{\alpha k}\leq\sum_{\alpha,k}a_{\alpha k}=\deg(z), \]
as desired.
\end{proof}

Now, consider what happens when we remove the adjoint type assumption on $G$.
Then the inclusion
\[ \chi_*\to\Lambda^\vee=(\bbZ\Phi)^* \]
is no longer surjective.  Its cokernel is (by definition) the ``cofundamental group'' of $G$, i.e.~the quotient of the fundamental group of the root system $\Phi$ by the fundamental group of $G$: a finite abelian group.
We can no longer realize the fundamental coweights $\alpha_s^*$ by cocharacters; however, we do have that their multiples $e\alpha_s^*$ are realized by cocharacters, where $e$ is the exponent of the cofundamental group.
This means that, for each $s\in S$, there is a map
$x_s:\bar\F_2^\times\to\bar T$ with the property that
\[ \alpha_{s'}\circ x_s=\begin{cases} c\mapsto c^e &\mbox{if }s=s',\\
1 &\mbox{if }s\neq s'. \end{cases} \]
In particular there is an element $t_s\in T$ such that $\alpha_s(t_s)$ has order
\[ \frac{2^r-1}{\gcd(e,2^r-1)}, \]
and $\alpha_{s'}(t_s)=1$ for all $s'\neq s$.
The rest of the argument proceeds similarly, showing:
\begin{thm}
Let $G$ be a finite group of Lie type over $\F_{2^r}$, whose root system has no component of type $E_8$, $F_4$, or $G_2$.  Then
\[ H^i(G;\F_2)=0\mbox{ for } 0<i<r/\gcd(e,2^r-1). \]
Here $e$ is the exponent of the cofundamental group of $G$, i.e.~the quotient of the coweight lattice by the cocharacter lattice.
\end{thm}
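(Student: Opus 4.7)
The plan is to adapt the proof of Proposition~\ref{adjointchar2} by carrying the additional factor $d=\gcd(e,2^r-1)$ through the divisibility argument. First I would apply Corollary~\ref{tvanishing} to the root-height filtration on $U$, reducing the theorem to showing
\[ H^i(\gr U;\F_{2^r})^T = 0 \quad\text{for}\quad 0<i<r/d, \]
and then run the same induction on $|J|\subseteq S$ over the $T$-stable subspaces $V_J=\bigoplus_{\alpha\in\Phi_J^+}U_\alpha$, working with the monomial $T$-eigenbasis $\prod z_{\alpha k}^{a_{\alpha k}}$ of $H^*(V_J;\F_{2^r})$.

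For the inductive step I would repeat the case analysis of the adjoint proof verbatim. Given a nontrivial $T$-invariant monomial $z$, if some $b_s=\sum_{\alpha,k}\alpha_s^*(\alpha)a_{\alpha k}$ vanishes, then the support of $z$ lies in $V_{J'}$ for a strictly smaller $J'\subsetneq J$ and the induction hypothesis applies. Otherwise all $b_s>0$, and I would choose an irreducible component of $\Phi_J$ (none of them of type $E_8$, $F_4$, or $G_2$, by the hypothesis on $\Phi$), its highest root $\alpha_0$, and a simple root $\alpha_s$ with $\alpha_s^*(\alpha_0)=1$, so that $\alpha_s^*(\alpha)\in\{0,1\}$ for every $\alpha\in\Phi_J^+$.

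The only genuinely new step is the divisibility bookkeeping, which I would handle by reducing to the original Lemma~\ref{Quillenlemma} via a rescaling trick rather than proving a generalized Quillen lemma. The element $t_s\in T$ produced in the paragraph preceding the theorem has $\alpha_s(t_s)$ of order $m=(2^r-1)/d$. Invariance of $z$ under $t_s$ gives
\[ m \mid \sum_{\alpha,k} 2^k \alpha_s^*(\alpha)\, a_{\alpha k}, \]
so setting $c_k=\sum_\alpha \alpha_s^*(\alpha)a_{\alpha k}$ and multiplying through by $d$ yields
\[ (2^r-1) \mid \sum_{k=0}^{r-1} 2^k(d c_k), \]
with $dc_k\in\bbN$ not all zero (since $b_s>0$). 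Lemma~\ref{Quillenlemma} then gives $\sum_k dc_k\geq r$, hence $\sum_k c_k\geq r/d$, and therefore
\[ \deg(z)=\sum_{\alpha,k}a_{\alpha k}\geq\sum_k c_k\geq r/d, \]
completing the induction.

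There is no real obstacle here: once one notices that the reduced order $(2^r-1)/d$ can be absorbed by rescaling the input to Quillen's lemma by the factor $d$, the argument is mechanically parallel to the adjoint case. The only extra ingredient beyond Proposition~\ref{adjointchar2} is the identification of the cokernel $\Lambda^\vee/\chi_*$ with the cofundamental group and the corresponding construction of $t_s$, both of which are already spelled out in the paragraph preceding the theorem.
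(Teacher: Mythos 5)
Your proposal is correct and follows exactly the route the paper intends: the paper constructs $t_s$ via the cocharacter $e\alpha_s^*$ (giving $\alpha_s(t_s)$ of order $(2^r-1)/\gcd(e,2^r-1)$) and then simply says ``the rest of the argument proceeds similarly,'' leaving the final divisibility bookkeeping implicit. Your rescaling observation --- that $m\mid\sum_k 2^k c_k$ with $m=(2^r-1)/d$ becomes $(2^r-1)\mid\sum_k 2^k(dc_k)$, so Lemma~\ref{Quillenlemma} applies to the integers $dc_k$ and yields $\sum_k c_k\geq r/d$ --- is precisely the detail needed to close the argument, and everything else (the induction on $J$, the dichotomy on $b_s$, the use of $\alpha_s^*(\alpha)\in\{0,1\}$) carries over verbatim from Proposition~\ref{adjointchar2}.
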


In some cases (besides the adjoint groups), the $\gcd$ in the theorem is always one:
\begin{cor}
Let $G$ be a finite group of Lie type over $\F_{2^r}$, whose root system has only components of types $B$, $C$, $D$, and $E_7$.  Then
\[ H^i(G;\F_2)=0\mbox{ for } 0<i<r. \]
\end{cor}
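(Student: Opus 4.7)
The plan is to deduce this corollary directly from the preceding theorem, where it remains only to verify that $\gcd(e, 2^r-1) = 1$, with $e$ the exponent of the cofundamental group $\Lambda^\vee/\chi_*$. Since $2^r-1$ is odd, this reduces further to showing that $e$ is a power of $2$.

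My first step would be to observe that the coroot lattice $\bbZ\Phi^\vee$ is always contained in the cocharacter lattice $\chi_*$ (this is part of the root datum axioms for a connected reductive group), so the cofundamental group is a quotient of $\Lambda^\vee/\bbZ\Phi^\vee$, and hence its exponent $e$ divides the exponent of the latter. Because this lattice quotient decomposes as a direct sum indexed by the irreducible components of $\Phi$, its exponent is the least common multiple of the per-component exponents, and the problem reduces to a type-by-type check for irreducible root systems of types $B_n$, $C_n$, $D_n$, and $E_7$.

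For an irreducible $\Phi$, the group $\Lambda^\vee/\bbZ\Phi^\vee$ is canonically isomorphic to $\Lambda/\bbZ\Phi$ computed for the Langlands dual root system $\Phi^\vee$, i.e.~to the center of the simply-connected Lie group of type $\Phi^\vee$. Consulting Bourbaki's tables, we have $B_n^\vee = C_n$ (center $\bbZ/2$), $C_n^\vee = B_n$ (center $\bbZ/2$), $D_n^\vee = D_n$ (center $\bbZ/2 \times \bbZ/2$ or $\bbZ/4$ according to the parity of $n$), and $E_7^\vee = E_7$ (center $\bbZ/2$). In every case the exponent divides $4$, so $e$ is a power of $2$, coprime to $2^r-1$, and the previous theorem then yields the desired vanishing for $0 < i < r$.

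The ``main obstacle'' is not really mathematical but bookkeeping: correctly identifying the cofundamental group as a quotient of a group whose exponent can be read off from the Langlands dual, and confirming from tables that the relevant fundamental groups are $2$-primary for exactly the four listed types. Note that this argument would also fail if $E_6$ or $A_n$ ($n \geq 2$) were permitted, since their duals have centers of order divisible by $3$ or $n+1$ respectively, which need not be coprime to $2^r-1$; so the list in the hypothesis is essentially tight for this approach.
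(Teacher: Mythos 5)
Your proof is correct and follows essentially the same route as the paper: reduce to showing that the exponent $e$ of the cofundamental group is a power of $2$ (hence coprime to $2^r-1$), then verify this type-by-type from Bourbaki's tables. The paper states this in one line, silently identifying the cofundamental group with (a quotient of) the fundamental group of the root system; your version spells out the intermediate steps — $\bbZ\Phi^\vee \subset \chi_*$, the reduction to irreducible components, and the Langlands-dual identification — which is a worthwhile clarification of a somewhat terse citation.
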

\begin{proof}
In these cases, the exponent of the fundamental group of the root system is a power of 2~\cite[Tables I-IX, item (VIII)]{Bourbaki}.
\end{proof}

Notice that, in this section, we have only ever employed a single coweight $\alpha_s^*$ at a time to establish our bounds, whereas the Friedlander-Parshall argument for $GL_n$ uses two at once.  The reason for this is that, except in type $A$, there are never two distinct $s,s'\in S$ with $\alpha_s^*(\alpha_0)=\alpha_{s'}^*(\alpha_0)=1$.  This is the hindrance to employing the above argument in odd characteristic.  Much more delicate root system combinatorics appear to be necessary there to get a reasonably effective bound.  Were we to make the same argument in odd characteristic $p$, the analog of Proposition~\ref{adjointchar2} would give a bound of $r(p-1)$ for the adjoint groups: lower than the sharp bound in the known cases ($r(2p-3)$, in the simply-laced types for $p$ large enough~\cite{BNP2}).
In contrast, for the adjoint groups when $p=2$, it is not possible to improve these results by finer control of the root system combinatorics; this is because $H^*(\gr U)^T$ contains copies of $H^*(\F_{2^r})^T$, which are nonzero in degree $r$.  It is likely that improvement is possible in the non-adjoint cases however.
\begin{comment}
(should I just end here and kill all the following speculation?)
For example, in the simply-connected case the problem amounts to answering the following question:
\begin{quest}
Let $\Phi$ be a root system with base $S$ and let $(a_\alpha)_{\alpha\in\Phi^+}$ be natural numbers, not all zero.  What is the minimum possible value of
\[ \sum_{\alpha\in\Phi^+}a_\alpha \]
subject to the constraints
\[ (2^r-1)\mid\sum_{\alpha,k}2^k\alpha_s^\vee(\alpha)a_{\alpha k} \]
for all $s\in S$?
\end{quest}
Similar remarks apply to the analysis of the excluded types $E_8$, $F_4$, $G_2$.
%however, these cases could likely be handled separately, with the aid of a computer.
Likely, to answer these questions, we would need to develop a common generalization of the root system combinatorics of this section and of section~\ref{rootaction}, answering:
\begin{quest}
For $J\subset S$, what is the exponent of the cokernel of the composition
\[ \chi_*\to(\bbZ\Phi)^*\to(\bbZ\Phi_J)^* \]
expressing the action of $T$ on $V_J$?
\end{quest}
In section~\ref{rootaction}, we examined the case $|J|=1$, and showed that the answer is 1 or 2, with the latter occurring only in type $C$ (and $J$ the long simple root).
\end{comment}

\section{The case of odd $p$}
Let $p$ be an odd prime; now we study $H^{r(2p-3)}(GL_n\F_{p^r};\F_p)$ in this case.
The situation with $p$ odd is considerably more complicated due to the presence of exterior generators in the cohomology ring of an elementary abelian $p$-group.
%%%This case is significantly different (from when $p=2$) because the cohomology of an $\F_{p^r}$-vector space contains exterior as well as polynomial generators, with the same $\F_{p^r}^\times$-weights but in different degrees.
It will no longer be the case that our lowest cohomology is detected on the individual root subgroups $E_{ij}$; now we must consider the following ``hook subgroups.''
For each pair $\ell<m$, define the hook subgroup
\begin{align*}
K_{\ell m} &= \set{x\in U_n}{ x_{ij}=\delta_{ij} \mbox{ unless } \ell=i<j\leq m \mbox{ or } \ell\leq i<j=m } \\
&=\begin{bmatrix}
I_{\ell-1}&0&0&0&0 \\
 &1&*&*&0 \\
 &&I_{m-\ell-1}&*&0 \\
 &&&1&0 \\
 &&&&I_{n-m}
\end{bmatrix}.
\end{align*}
Note that the diagonal torus $T=T_n(\F_{p^r})=(\F_{p^r}^\times)^n$ normalizes each $K_{\ell m}$.  Our interest in these hook subgroups is that they detect $H^{r(2p-3)}(\gr U_n)^T$ and therefore (as will follow) $H^{r(2p-3)}(GL_n\F_{p^r})$; see part (c) of the next lemma.

As before, equip $U_n$ (and therefore each $K_{\ell m}$) with the ``superdiagonal'' central filtration (described in section~\ref{filtration}), so that
\[ \gr U_n = \prod_{i<j} E_{ij} \]
and
\[ \gr K_{\ell m} = \prod_{\substack{\ell=i<j\leq m \mbox{ or } \\ \ell\leq i<j=m}} E_{ij}. \]

As before, we begin by stating the desired detection property on the level of associated graded groups.

\begin{lemma}\label{grhook}
\begin{enumerate}[(a)]
\item For $0<i<r(2p-3)$,
\[ H^i(\gr U_n)^T = 0; \]
\item For $0<i<r(2p-3)$, and $\ell<m$,
\[ H^i(\gr K_{\ell m})^T = 0; \]
\item The restriction map
\[ H^{r(2p-3)}(\gr U_n)^T \to
\prod_{\ell<m} H^{r(2p-3)}(\gr K_{\ell m})^T \]
is injective.
\end{enumerate}
\end{lemma}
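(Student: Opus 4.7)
My plan is to extend coefficients to $\F_{p^r}$ throughout, using that extension commutes with invariants of the nonmodular $T$. Then by Proposition~\ref{vectorspacecoh}, $H^*(\gr U_n;\F_{p^r})$ has a monomial basis of $T$-eigenvectors
\[ z=\prod_{i<j,\,k}x_{ijk}^{a_{ijk}}y_{ijk}^{b_{ijk}}, \]
and the $T$-invariants are spanned by those monomials for which $(p^r-1)$ divides the ``flow divergence'' $\sum_{j>s,k}p^k(a_{sjk}+b_{sjk})-\sum_{i<s,k}p^k(a_{isk}+b_{isk})$ at each vertex $s$; the same description holds for each hook subgroup.

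I would begin by establishing a combinatorial core: for any $T$-invariant monomial $z$ in $\gr U_n$ whose support touches both row~$1$ and column~$n$, the bound $\deg z|_{R_1\cup C_n}\geq N_1+N_n-r\geq r(2p-3)$, where $N_1=\sum_{j>1,k}(a+b)_{1jk}$ and $N_n=\sum_{i<n,k}(a+b)_{ink}$ (and analogously for $K_{\ell m}$). Both satisfy $N_1,N_n\geq r(p-1)$ by Lemma~\ref{Quillenlemma} applied to the positive $t_1$- and $t_n$-flow sums. The proof is a level-by-level computation: separating the $(1,n)$-contribution and using $a_{ijk}\in\{0,1\}$, one obtains at each level $k$
\[ \sum_{j>1}a_{1jk}\leq \bigl(c_{1k}-N_{1nk}\bigr)+\min(1,N_{1nk}), \]
with $c_{1k}=\sum_{j>1}(a+b)_{1jk}$ and $N_{1nk}=a_{1nk}+b_{1nk}$, and symmetrically for column $n$; summing yields
\[ A_1+A_n+2N_{1n}-A_{1n}\leq N_1+N_n+\sum_k\min(1,N_{1nk})\leq N_1+N_n+r, \]
from which $\deg z|_{R_1\cup C_n}=2(N_1+N_n)-(A_1+A_n+2N_{1n}-A_{1n})\geq N_1+N_n-r\geq r(2p-3)$.

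All three parts then follow quickly. For part~(a), I would induct on $n$: if $z$ has no row-1 (or no column-$n$) support, reduce to $\gr U_{n-1}$ embedded in $\gr U_n$ and apply induction; otherwise apply the core via $\deg z\geq\deg z|_{R_1\cup C_n}\geq r(2p-3)$. For part~(b), fix $\ell<m$ and translate so $\ell=1, m=n$; if $z$ avoids row~$1$ (say), the intermediate-vertex constraints $d_{sn}\equiv d_{1s}\pmod{p^r-1}$ with $d_{1s}=0$ force each $d_{sn}\equiv 0$ individually, making $z$ a product of $\F_{p^r}^\times$-invariants on single root subgroups $E_{in}\iso\F_{p^r}$, each nontrivial factor contributing $\geq r(2p-3)$ by Proposition~\ref{gl2vanishing}; the column-avoiding case is symmetric, and the general case uses the core. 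For part~(c), I would again induct on $n$: a degree-$r(2p-3)$ invariant monomial either reduces inductively to a hook of $U_{n-1}$, or---by the inequality $\deg z\geq r(2p-3)+\deg z|_{\text{interior}}$---has vanishing interior support and is thus already contained in $K_{1n}$; in either case $z$ restricts nontrivially to some hook, yielding injectivity on the monomial basis.

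The main obstacle will be the tight combinatorial inequality of the core lemma: the slack of $r$ in $r(2p-3)=2r(p-1)-r$ matches exactly the $r$ from $\sum_k\min(1,N_{1nk})\leq r$, so the bound is saturated and requires the careful combination of the exterior constraint $a\in\{0,1\}$ with Lemma~\ref{Quillenlemma} to close without waste.
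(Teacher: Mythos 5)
Your overall strategy---extending coefficients to $\F_{p^r}$, working in the monomial eigenbasis, reducing to $\gr U_{n-1}$ when row $1$ or column $n$ is unsupported, applying Lemma~\ref{Quillenlemma} to get $N_1, N_n \geq r(p-1)$, and then closing the gap of $r$ using the boundedness of the exterior degree at $(1,n)$---is exactly the paper's approach, and the reductions you give for parts (a), (b), (c) are sound in outline. (For (b), though, note that the paper gets it for free: $\gr K_{\ell m}$ is a $T$-equivariant retract of $\gr U_n$, so $H^i(\gr K_{\ell m})^T$ is a direct summand of $H^i(\gr U_n)^T$ and part (b) follows immediately from (a), with no need for the intermediate-vertex divisibility argument you sketch.)

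However, the derivation of your ``core'' inequality has a real gap. Summing your two level-$k$ inequalities (row and column) over $k$ gives
\[
A_1 + A_n \;\leq\; (N_1 - N_{1n} + M) + (N_n - N_{1n} + M),
\qquad M := \sum_k \min(1, N_{1nk}),
\]
and hence
\[
A_1 + A_n + 2N_{1n} - A_{1n} \;\leq\; N_1 + N_n + 2M - A_{1n},
\]
not $N_1+N_n+M$ as you assert. Since $M\geq A_{1n}$ always, the quantity $2M-A_{1n}$ is $\geq M$ and can be as large as $2r$ (take all $a_{1nk}=0$, all $b_{1nk}>0$, so $M=r$ and $A_{1n}=0$), so your chain only delivers $\deg z|_{R_1\cup C_n}\geq N_1+N_n-2r \geq r(2p-4)$, which falls short of $r(2p-3)$ by $r$. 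The conclusion you want is still true, but it requires a tighter argument than your $\min(1,\cdot)$ bookkeeping produces.

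The clean way to close it (which is essentially what the paper does, inequality~\eqref{grinequality}) is to bound from below rather than try to control the exterior degrees from above: write each monomial degree as $\deg z_{ij} = N_{ij} + B_{ij}$ where $B_{ij}$ is the polynomial part, drop \emph{all} polynomial contributions except $B_{1n}$, and use inclusion--exclusion:
\[
\deg z|_{R_1\cup C_n} \;\geq\; \sum_{i=1\ \text{or}\ j=n} N_{ij} + B_{1n}
\;=\; (N_1 + N_n - N_{1n}) + (N_{1n}-A_{1n})
\;=\; N_1 + N_n - A_{1n} \;\geq\; N_1+N_n-r,
\]
with the last step using only $A_{1n}\leq r$ (at most one exterior generator at $(1,n)$ per level $k$). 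This recovers your stated bound without the lossy $2M$, and the slack of $r$ in $r(2p-3)=2r(p-1)-r$ is accounted for exactly by the bound $A_{1n}\leq r$, not by $\sum_k\min(1,N_{1nk})$.

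Two smaller remarks. First, in part (c), ``each basis monomial restricts nontrivially to some hook'' does imply injectivity here, but only because restriction to a hook sends each monomial either to itself or to zero (so a linear combination in the kernel must have all coefficients on hook-supported monomials equal to zero); the paper packages this as Lemma~\ref{kercoker}, and you should at least state the observation since injectivity of a map is not in general checked on a basis of the source. Second, the statement in your part (b) reduction that an avoided row forces each $E_{sn}$-factor to be individually $\F_{p^r}^\times$-invariant is correct, but it is quicker and sharper to use the retract observation.
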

\noindent This result is similar to but more involved than Lemma~\ref{grrootdetection}.  We place the proof in its own section after this one.

Part (a) of the lemma shows that we may apply Corollary~\ref{tvanishing} to conclude:
\begin{cor}
For $0<d<r(2p-3)$, $H^d(U_n)^T=0$.
\end{cor}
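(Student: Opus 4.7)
The plan is to deduce this corollary immediately from the two prior results already assembled, namely Lemma~\ref{grhook}(a) and Corollary~\ref{tvanishing}. So my first step is to check that the hypotheses of Corollary~\ref{tvanishing} are satisfied for the pair $(G, T) = (U_n, T_n(\F_{p^r}))$ equipped with the superdiagonal filtration described in section~\ref{filtration}. This filtration is central (in fact it is the lower central series), the diagonal torus is nonmodular (its order $(p^r-1)^n$ is coprime to $p$), and the conjugation action of $T$ preserves each filtration level because each $G_k$ is a product of the $T$-stable root subgroups $E_{ij}$.

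The second step is then to invoke Lemma~\ref{grhook}(a), which provides exactly the vanishing $H^i(\gr U_n; \F_p)^T = 0$ for $0 < i < r(2p-3)$ needed to feed into Corollary~\ref{tvanishing}. Applying the latter with $d = r(2p-3)$ yields $H^i(U_n; \F_p)^T = 0$ in the same range, which is the corollary.

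There is no real obstacle here: all the substantive work was done in establishing Lemma~\ref{grhook}(a), which reduces the question to invariants of $T$ on the cohomology of an elementary abelian group with an explicit monomial eigenbasis, and in setting up the spectral sequence machinery culminating in Corollary~\ref{tvanishing}. The corollary is essentially a one-line consequence, and the only thing worth mentioning in the written proof is the verification that the hypotheses apply to $U_n$ with its torus action.
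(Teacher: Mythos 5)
Your proposal is exactly the paper's argument: the corollary is stated immediately after Lemma~\ref{grhook}(a) with the one-line justification that part~(a) lets one apply Corollary~\ref{tvanishing}. Your explicit verification of the hypotheses (centrality of the superdiagonal filtration, nonmodularity of $T$, and $T$-invariance of the filtration) is a sensible elaboration of what the paper already set up in section~\ref{filtration}.
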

\noindent Of course it follows that $H^i(GL_n\F_{p^r})=0$ in those degrees; this is (essentially) the argument used by Friedlander-Parshall~\cite{FP}.  Now we will investigate the situation in degree $r(2p-3)$.

Part (b) of the lemma shows that the hypotheses of Theorem~\ref{tdetectionthm} are satisfied, with $G=U_n(\F_{p^r})$, $\{K_k\}=\{K_{ij}\}$, and $d=r(2p-3)$.  Together with part (c) of the lemma, it shows that
\[ H^{r(2p-3)}(U_n)^T\to\prod_{i<j} H^{r(2p-3)}(K_{ij})^T \]
is injective.
Therefore:
\begin{lemma}\label{hookdetection}
Restriction
\[ H^{r(2p-3)}(GL_n\F_{p^r})\to\prod_{i<j} H^{r(2p-3)}(K_{ij}) \]
is injective.
\end{lemma}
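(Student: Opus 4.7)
The plan is to combine the Equivariant Detection Theorem (Theorem~\ref{tdetectionthm}) with standard Sylow/Borel cohomology reductions. As the paragraph preceding the lemma already sketches, I would apply Theorem~\ref{tdetectionthm} to $G = U_n$, equipped with the superdiagonal central filtration of Section~\ref{filtration}, acted on by the diagonal torus $T = (\F_{p^r}^\times)^n$, with the $T$-stable family $\{K_{ij}\}_{i<j}$ and $d = r(2p-3)$. The theorem's two hypotheses are exactly: (i) $H^{d-1}(\gr K_{ij})^T = 0$ for each pair $i<j$, supplied by Lemma~\ref{grhook}(b), and (ii) vanishing of the kernel of
\[ H^d(\gr U_n)^T \to \prod_{i<j} H^d(\gr K_{ij})^T, \]
supplied by Lemma~\ref{grhook}(c). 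The theorem therefore delivers injectivity of the ungraded restriction
\[ H^{r(2p-3)}(U_n)^T \to \prod_{i<j} H^{r(2p-3)}(K_{ij})^T. \]

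To finish, I would lift this $T$-invariant statement up to $GL_n\F_{p^r}$ using the standard facts recalled in the ``Assorted facts'' section. Since $U_n$ is a Sylow $p$-subgroup of $GL_n\F_{p^r}$ contained in $B_n$, the restriction $H^*(GL_n\F_{p^r}) \to H^*(B_n)$ is injective; and since $U_n \normal B_n$ with complement $T$ of order prime to $p$, the restriction $H^*(B_n) \to H^*(U_n)^T$ is an isomorphism. Chaining these maps with the injection above, and observing that $T$ normalizes each $K_{ij}$ so that the restriction $H^*(U_n)^T \to H^*(K_{ij})$ automatically lands in $H^*(K_{ij})^T$, yields the desired injectivity of $H^{r(2p-3)}(GL_n\F_{p^r}) \to \prod_{i<j} H^{r(2p-3)}(K_{ij})$.

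At this level the argument is purely formal assembly; the genuine combinatorial content is concentrated entirely in Lemma~\ref{grhook}, whose proof (deferred to the following section) is where I would expect the real obstacle to lie --- in particular, in establishing the injectivity of part (c), which requires producing enough distinct $T$-eigenmonomials in $H^*(\gr U_n)$ to faithfully read them off via the hook projections. The lifting step proposed here, by contrast, raises no difficulty.
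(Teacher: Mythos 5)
Your argument is correct and matches the paper's reasoning exactly: apply Theorem~\ref{tdetectionthm} to $U_n$ with the superdiagonal filtration and the family $\{K_{ij}\}$, feed in Lemma~\ref{grhook}(b) for the hypothesis and Lemma~\ref{grhook}(c) for the vanishing of the kernel on the associated graded level, and then lift from $H^*(U_n)^T \cong H^*(B_n)$ to $H^*(GL_n\F_{p^r})$ via the standard Sylow injectivity. The only tiny quibble is terminological --- item (ii) in your list is not a hypothesis of Theorem~\ref{tdetectionthm} but the input that makes its conclusion yield injectivity --- and this does not affect the correctness of the argument.
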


This lemma says that $H^{r(2p-3)}(GL_n\F_{p^r})$ is detected on the hook subgroups $K_{ij}$.  However, actually the restriction to every such hook subgroup---except for the largest one---is trivial:

\begin{lemma}\label{smallhook}
The restriction
\[ H^{r(2p-3)}(GL_n\F_{p^r})\to H^{r(2p-3)}(K_{ij}) \]
is zero, unless $(i,j)=(1,n)$.
\end{lemma}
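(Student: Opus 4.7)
My strategy is to exhibit $K_{ij}$ inside a proper block-diagonal subgroup of $GL_n\F_{p^r}$ on which the degree $r(2p-3)$ cohomology already restricts trivially, and to run the argument in parallel with an induction on $n$ for Theorem~\ref{lowestGL}.

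First, I would use the involutive automorphism $g\mapsto w_0(g\inv)^Tw_0\inv$ of $GL_n\F_{p^r}$ (where $w_0$ is the longest Weyl element), which preserves the Borel subgroup and carries $K_{ij}$ to $K_{n+1-j,\,n+1-i}$. Since $(i,j)\neq(1,n)$, after applying this involution if necessary we may assume $j<n$. In that case, every element of $K_{ij}$ has its last $n-j$ columns equal to the standard basis vectors $e_{j+1},\ldots,e_n$, so
\[ K_{ij}\leq L:=\begin{bmatrix}GL_j\F_{p^r}&0\\0&I_{n-j}\end{bmatrix}\iso GL_j\F_{p^r}. \]
The restriction then factors as
\[ H^{r(2p-3)}(GL_n\F_{p^r})\xrightarrow{\rho}H^{r(2p-3)}(GL_j\F_{p^r})\to H^{r(2p-3)}(K_{ij}), \]
and it suffices to show $\rho=0$.

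For $p=2$, Theorem~\ref{lowestcoh2} immediately gives $H^r(GL_n\F_{2^r})=0$ for $n>2$, so $\rho=0$ trivially (and the case $n=2$ is vacuous, its only hook being $K_{12}=K_{1n}$). For $r=1$ with $p$ odd, I would invoke the Chern class computation of Section~\ref{chernclass}. Under the block-diagonal embedding $GL_j\F_p\hookrightarrow GL_n\F_p$, the natural permutation representation $V=\F_p^n-\{0\}$ decomposes into $p^{n-j}-1$ fixed points plus $p^{n-j}$ copies of $V_j=\F_p^j-\{0\}$, so by the freshman's dream applied to the Chern polynomial,
\[ c(V|_L)=c(V_j)^{p^{n-j}}=1+c_1(V_j)^{p^{n-j}}+c_2(V_j)^{p^{n-j}}+\cdots, \]
whose nontrivial homogeneous pieces live in degrees divisible by $2p^{n-j}>2(p-1)$ when $n>j$. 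Consequently $c_{p-1}(V)|_L=0$ in $H^{2p-2}(GL_j\F_p)$.

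To promote this vanishing from degree $2p-2$ to degree $2p-3$, I would exploit the inductive hypothesis: by Theorem~\ref{lowestGL} applied to values smaller than $n$, the target $H^{2p-3}(GL_j\F_p)$ is at most one-dimensional, and when nonzero its generator is a Bockstein preimage of $c_{p-1}(V_j)\neq0$, so the Bockstein restricted to it is injective. Naturality of $\beta$ under $\rho$ then reduces $\rho=0$ to the statement that every class in $H^{2p-3}(GL_n\F_p)$ has Bockstein image restricting to zero in $H^{2p-2}(GL_j\F_p)$. The main obstacle is establishing this last ingredient---i.e., controlling the restriction $H^{2p-2}(GL_n\F_p)\to H^{2p-2}(GL_j\F_p)$---without circularly invoking the theorem we are trying to prove. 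A clean resolution is to prove the analogous detection-on-hooks statement in degree $2p-2$ by a direct variant of Lemma~\ref{grhook}, or alternatively to use the explicit transfer-based generator supplied by Corollary~\ref{nonvanishing} and check the restriction by direct computation.
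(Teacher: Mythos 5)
Your reduction to $j<n$ via the involution is fine (and parallels the paper's use of the same symmetry), and the Chern class computation $c(V|_L)=c(V_j)^{p^{n-j}}$ with its consequence $c_{p-1}(V)|_L=0$ is correct.  But the proof has a genuine gap, which you yourself flag at the end: you cannot get from the degree $2p-2$ Chern class vanishing to the degree $r(2p-3)$ statement without an additional ingredient, and the two candidate repairs you sketch are not carried out.  Moreover the lemma is stated and needed for general $r$ (it feeds Theorem~\ref{bighook}, which is stated for all $r$ and only afterwards specialized to $r=1$), while the Chern-class route you propose handles only $r=1$.  There is also a structural concern: your reduction replaces the original target $K_{ij}$ by the much larger subgroup $GL_j\F_{p^r}$, so you are effectively trying to prove the stronger ``restriction to $GL_{n-1}$ is zero'' statement.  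The paper proves \emph{that} as a corollary \emph{after} Lemma~\ref{smallhook} (using Lemmas~\ref{hookdetection} and~\ref{smallhook} together), so your route risks circularity if you attempt to use it.

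The paper's argument is quite different and avoids all these issues.  Using the same symmetry it reduces to $(i,j)=(2,n)$, then embeds $K_{2,n}$ not in a block-diagonal $GL_j$ but in a subgroup $L$ whose upper-left $2\times2$ block is replaced by $\F_{p^{2r}}^\times$ (via a choice of basis $\F_{p^{2r}}\iso\F_{p^r}^2$).  The key point is that $L$ fits into a split extension
\[ 1\to \F_{p^{2r}}^{n-2}\semidirect\F_{p^{2r}}^\times\to L\to B\to1 \]
whose kernel has vanishing $\F_p$-cohomology below degree $2r(p-1)>r(2p-3)$ by Lemma~\ref{edgegroup}.  The associated spectral sequence then identifies $H^{r(2p-3)}(L)$ with $H^{r(2p-3)}(B)$, and $B$ sits inside the right edge subgroup, where restriction from $GL_n\F_{p^r}$ already vanishes in this degree by Corollary~\ref{rightedge}.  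This works uniformly in $r$ and uses only facts established earlier in the chapter, with no appeal to Chern classes or to later detection results.  If you want to salvage your Chern-class approach as an alternative proof, you would need to independently establish the vanishing of $H^{2p-3}(GL_n\F_p)\to H^{2p-3}(GL_j\F_p)$ for $j<n$ by a direct argument (not by quoting the theorem under construction), and you would still need a separate argument for $r>1$.
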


\begin{proof}
Suppose $(i,j)\neq(1,n)$.  By symmetry\footnote{Specifically, the group automorphism obtained by composing the inverse transpose and conjugation by the permutation $[n\cdots 1]$}
we may assume that $i>1$.  By induction on $n$, it suffices to treat the case $(i,j)=(2,n)$: for otherwise, the restriction factors through a smaller general linear group.
Now,
\[ K_{2,n}=\begin{bmatrix}
1&0&0&0 \\
 &1&*&* \\
 & &I_{n-3}&* \\
 & & &1
\end{bmatrix}
\leq GL_n\F_{p^r} \]
is contained in the subgroup

\begin{align*}
L=
\begin{bmatrix}
\multicolumn{2}{c}{\multirow{2}{*}{ $\F_{p^{2r}}^\times$ }}
 &*&& * \\
&&*&& * \\
&&I_{n-3}&&* \\
&&&&1
\end{bmatrix},
\end{align*}
where we view $\F_{p^{2r}}^\times$ as a subgroup of $GL_2\F_{p^r}$ via a choice of basis $\F_{p^{2r}}\iso\F_{p^r}^2$.
Hence it suffices to show the restriction to $L$ vanishes in degree $r(2p-3)$.  Now, there is a split extension
\[ 1\to \F_{p^{2r}}^{n-2}\semidirect\F_{p^{2r}}^\times\to L\xrightarrow{\pi} B\to1, \]
with
\[ B=\begin{bmatrix} I_2&0&0 \\ &I_{n-3}&\ast \\ &&1 \end{bmatrix}. \]
But by Lemma~\ref{edgegroup}, $H^i(\F_{p^{2r}}^{n-2}\semidirect\F_{p^{2r}}^\times)=0$ for $0<i<2r(p-1)$, in particular up to degree $r(2p-3)$.  Hence the spectral sequence for the extension shows that $\pi^*:H^{r(2p-3)}(B)\to H^{r(2p-3)}(L)$ is an isomorphism; equivalently, restriction
\[ H^{r(2p-3)}(L)\to H^{r(2p-3)}(B) \]
is an isomorphism.  Therefore, to see that restriction
\[ H^{r(2p-3)}(GL_n\F_{p^r})\to H^{r(2p-3)}(L) \]
vanishes, it suffices to check that
\[ H^{r(2p-3)}(GL_n\F_{p^r})\to H^{r(2p-3)}(B) \]
vanishes.  But this follows from Corollary~\ref{rightedge}, since $B$ is contained in the right edge subgroup.
\end{proof}

Incidentally, by applying Lemma~\ref{hookdetection} to $GL_{n-1}\F_{p^r}$ and Lemma~\ref{smallhook} to $GL_n\F_{p^r}$, we now have:
\begin{cor}
The restriction
\[H^{r(2p-3)}(GL_n\F_{p^r})\to H^{r(2p-3)}(GL_{n-1}\F_{p^r}) \]
is zero (for all $n$).
\end{cor}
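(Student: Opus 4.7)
The plan is to directly combine the two lemmas suggested by the hint: Lemma~\ref{hookdetection}, applied to $GL_{n-1}\F_{p^r}$, will show that restrictions to hook subgroups detect $H^{r(2p-3)}(GL_{n-1}\F_{p^r})$; and Lemma~\ref{smallhook}, applied to $GL_n\F_{p^r}$, will ensure that everything restricts to zero on those hook subgroups.

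More precisely, embed $GL_{n-1}\F_{p^r}$ as the top-left block subgroup of $GL_n\F_{p^r}$. Applying Lemma~\ref{hookdetection} to $GL_{n-1}\F_{p^r}$ yields an injective restriction
\[ H^{r(2p-3)}(GL_{n-1}\F_{p^r}) \hookrightarrow \prod_{1\leq i<j\leq n-1} H^{r(2p-3)}(K_{ij}^{(n-1)}), \]
where $K_{ij}^{(n-1)}$ denotes the hook subgroup in $GL_{n-1}\F_{p^r}$. The key observation is that the defining conditions for hook subgroups (all entries trivial except in row $i$ and column $j$, between positions $i$ and $j$) are insensitive to the ambient group size, so under the block inclusion $GL_{n-1}\F_{p^r}\hookrightarrow GL_n\F_{p^r}$ we have $K_{ij}^{(n-1)}=K_{ij}^{(n)}$ as subgroups of $GL_n\F_{p^r}$, for each $1\leq i<j\leq n-1$.

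Now consider the composition
\[ H^{r(2p-3)}(GL_n\F_{p^r})\to H^{r(2p-3)}(GL_{n-1}\F_{p^r})\hookrightarrow \prod_{1\leq i<j\leq n-1} H^{r(2p-3)}(K_{ij}^{(n)}). \]
For each summand, this composition is just the restriction $H^{r(2p-3)}(GL_n\F_{p^r})\to H^{r(2p-3)}(K_{ij}^{(n)})$ with $j\leq n-1$, hence $(i,j)\neq(1,n)$; by Lemma~\ref{smallhook} this restriction vanishes. Therefore the composition is identically zero, and since the second map is injective, the first map is zero, as desired.

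The argument is essentially a combinatorial bookkeeping of which hook subgroups appear, so there is no substantive obstacle --- the only point that needs a moment's thought is verifying that hook subgroups of $GL_{n-1}\F_{p^r}$, viewed inside $GL_n\F_{p^r}$, literally coincide with hook subgroups of $GL_n\F_{p^r}$ (and are never the ``large'' hook $K_{1,n}^{(n)}$), so that Lemma~\ref{smallhook} applies to each of them.
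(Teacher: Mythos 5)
Your proof is correct and is precisely the argument the paper intends: it combines Lemma~\ref{hookdetection} applied to $GL_{n-1}\F_{p^r}$ (detection on its hook subgroups $K_{ij}$, $i<j\leq n-1$) with Lemma~\ref{smallhook} applied to $GL_n\F_{p^r}$ (vanishing of restriction to every $K_{ij}$ with $(i,j)\neq(1,n)$), noting that the hook subgroups of $GL_{n-1}\F_{p^r}$ coincide with the non-maximal hooks of $GL_n\F_{p^r}$ under the block embedding.
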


Now, let $K=K_{1n}$ be the largest hook subgroup.  Combined, Lemmas~\ref{hookdetection}~and~\ref{smallhook} show:
\begin{thm}\label{bighook}
Restriction
\[ H^{r(2p-3)}(GL_n\F_{p^r})\to H^{r(2p-3)}(K) \]
is injective.
\end{thm}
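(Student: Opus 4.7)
The plan is to derive Theorem~\ref{bighook} by straightforwardly combining the two preceding lemmas. Lemma~\ref{hookdetection} tells us that the joint restriction
\[ H^{r(2p-3)}(GL_n\F_{p^r})\to\prod_{i<j} H^{r(2p-3)}(K_{ij}) \]
is injective, so any class killed by all the component restrictions must be zero. Lemma~\ref{smallhook}, meanwhile, says that every component of this product map vanishes \emph{except} the one indexed by $(i,j)=(1,n)$. Those two facts are essentially tailor-made to hand us the conclusion.

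Concretely, suppose $\alpha \in H^{r(2p-3)}(GL_n\F_{p^r})$ lies in the kernel of restriction to $K = K_{1,n}$. Then for each pair $(i,j) \neq (1,n)$ the image of $\alpha$ in $H^{r(2p-3)}(K_{ij})$ vanishes by Lemma~\ref{smallhook}, and for $(i,j) = (1,n)$ it vanishes by hypothesis. Hence $\alpha$ lies in the kernel of the joint restriction map of Lemma~\ref{hookdetection}, which forces $\alpha = 0$. This is the whole argument; no spectral sequence work or invariant theory is needed beyond what has already been assembled.

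There is essentially no obstacle, because all the difficult work has been done upstream: the detection statement of Lemma~\ref{hookdetection} rests on the equivariant detection theorem (Theorem~\ref{tdetectionthm}) together with the combinatorial analysis of $H^*(\gr U_n)^T$ in Lemma~\ref{grhook}(c), while Lemma~\ref{smallhook} uses Corollary~\ref{rightedge} and the vanishing range of Lemma~\ref{edgegroup} to kill restriction to each of the smaller hooks. Once both ingredients are in hand, Theorem~\ref{bighook} is a one-line diagram chase, so I would simply write the proof as a short deduction citing the two lemmas.
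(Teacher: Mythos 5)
Your proof is correct and matches the paper's own argument exactly: the paper derives Theorem~\ref{bighook} by combining Lemma~\ref{hookdetection} (injectivity of the joint restriction to all hook subgroups) with Lemma~\ref{smallhook} (vanishing of restriction to every hook except $K_{1,n}$). No further comment is needed.
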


Now we have made progress, because the hook group $K$ (also called the ``Heisenberg group'') is a much less complicated group; it is nilpotent of class 2 (an extraspecial $p$-group when $r=1$).  However it is still not easy to calculate its cohomology directly.  When $r=1$, we can fortunately avoid doing so by noticing that almost all of its torus-invariant cohomology is detected on smaller hook subgroups, and therefore not in the image of restriction.

\section{Hook subgroup when $r=1$}
For this section, assume $p$ is odd and $r=1$.
For each $1<i<n$, define
\[ L_i=\set{A\in K}{A_{1i}=A_{in}=0}=\begin{bmatrix}
1&*&0&*&* \\ 
&I_{i-2}&&&* \\
&&1&&0 \\
&&&I_{n-i-1}&* \\
&&&&1
\end{bmatrix}
\leq K. \]

Observe that each $L_i$ is normalized by the diagonal subgroup $T=(\F_p^\times)^n$, and also that each $L_i$ is conjugate in $GL_n\F_p$ to the smaller hook subgroup
\[ K_{2,n}=\begin{bmatrix}
1&0&0&0 \\
 &1&*&* \\
 & &I_{n-3}&* \\
 & & &1
\end{bmatrix}. \]
Consequently, by Lemma~\ref{smallhook}, restriction $H^{2p-3}(GL_n\F_p)\to H^{2p-3}(L_i)$ vanishes.

Therefore, the image of restriction from $GL_n\F_p$ to $K$ is contained in the kernel of restriction to each $L_i$.  We'll show that this kernel is at most one-dimensional.  As above, the statement is given first on the associated graded level (and proven in the next section):

\begin{lemma}\label{gressentialhook}
\begin{enumerate}[(a)]
\item When $0<i<2p-3$,
\[ H^{2p-3}(\gr L_k)^T=0 \]
for all $1<k<n$;
\item We have
\[ \dim\ker\left( H^{2p-3}(\gr K)^T\to\prod_{1<k<n}H^{2p-3}(\gr L_k)^T \right)=
\begin{cases} 1&\mbox{for }2\leq n\leq p, \\ 0&\mbox{for }n>p. \end{cases} \]
\end{enumerate}
\end{lemma}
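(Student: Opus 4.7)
The plan is to work with explicit $T$-eigenbases for $H^*(\gr K;\F_p)$ and $H^*(\gr L_k;\F_p)$ obtained from Proposition \ref{vectorspacecoh}. Since $\gr K$ is elementary abelian, an $\F_p$-vector space whose set of root subgroups is $R = \{(1,j) : 2 \le j \le n\} \cup \{(i,n) : 1 \le i \le n-1\}$ (with $|R| = 2n-3$), we have
\[ H^*(\gr K;\F_p) = \Lambda(x_r \mid r \in R) \otimes \F_p[y_r \mid r \in R], \]
with $x_r$ in degree $1$, $y_r = \beta x_r$ in degree $2$, and $T$ acting on both by the character $t_i/t_j$ when $r = (i,j)$. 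The analogous description holds for $\gr L_k$ after discarding the four variables $x_{1k}, y_{1k}, x_{kn}, y_{kn}$, so the restriction $H^*(\gr K) \to H^*(\gr L_k)$ is simply the quotient sending those to zero. A $T$-eigenbasis consists of monomials $z = \prod_r x_r^{a_r} y_r^{b_r}$ with $a_r \in \{0,1\}, b_r \ge 0$. Writing $c_r = a_r + b_r$, such a monomial is $T$-invariant iff, modulo $p-1$: $\sum_{j=2}^n c_{1j} \equiv 0$ (from $t_1$), $\sum_{i=1}^{n-1} c_{in} \equiv 0$ (from $t_n$), and $c_{1m} \equiv c_{mn}$ for each $1 < m < n$ (from $t_m$); its degree is $\deg z = 2\sum_r c_r - \sum_r a_r$.

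For part (a), I would specialize the proof of Proposition \ref{gl2vanishing}: Lemma \ref{Quillenlemma} (with $r=1$) ensures any nonzero sum divisible by $p-1$ is at least $p-1$, and combining this with the $t_1$ and $t_n$ congruences forces every nontrivial $T$-invariant monomial on $\gr L_k$ to have degree at least $2(p-1)-1 = 2p-3$. Alternatively, since $\gr L_k$ is a $T$-equivariant direct factor of $\gr U_n$ (being the product of a subset of the root subgroups), part (a) is inherited directly from Lemma \ref{grhook}(a).

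For part (b), the key observation is that a monomial lies in the kernel of the restriction to every $\gr L_k$ iff $c_{1k} + c_{kn} > 0$ for every $k \in \{2, \ldots, n-1\}$. I would then argue that, at degree exactly $2p-3$, the minimality constraints together with $T$-invariance force $c_{1k} = c_{kn} = 1$ for each such $k$ (the $t_m$ congruence rules out any nontrivial ``Quillen jump'' $|c_{1k}-c_{kn}| \geq p-1$, since that would consume the degree budget by itself, while any excess $c_r \ge 2$ likewise overshoots). The $t_1$ congruence then determines $c_{1n} = p - n + 1$ as the unique smallest positive solution, which requires $n \le p$. Selecting $a_r = 1$ on the support yields the unique kernel element
\[ z_0 = \prod_{k=2}^{n-1} x_{1k}\, x_{kn} \cdot x_{1n}\, y_{1n}^{p-n}, \]
of degree $2(n-2) + 1 + 2(p-n) = 2p-3$; for $n>p$ the required value of $c_{1n}$ becomes nonpositive, so the kernel is empty.

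The main obstacle will be the uniqueness portion of part (b): systematically ruling out every alternative $T$-invariant monomial of degree $2p-3$ that also satisfies the kernel condition. This calls for a careful case analysis tracking the possibilities that (i) some $c_{1k}$ or $c_{kn}$ equals $p-1$ (a Quillen jump), (ii) a row or column sum $\sum_j c_{1j}$ or $\sum_i c_{in}$ equals $2(p-1)$ or more, or (iii) some $a_r = 0$ while $c_r \ge 1$. Combining the two independent Quillen-lemma bounds on the row and column sums with the exterior cap $a_r \le 1$ and the kernel condition $c_{1k}+c_{kn}>0$, each such deviation should strictly push the degree above $2p-3$, isolating $z_0$ as the sole kernel class.
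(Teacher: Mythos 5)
Your plan is essentially the paper's: identify the kernel condition as $c_{1k}+c_{kn}>0$ for all $1<k<n$ (this is the content of the paper's Lemma~\ref{kercoker}), then use Quillen's Lemma~\ref{Quillenlemma} to argue that the unique kernel monomial at degree $2p-3$ is $z_0$; part~(a) is done identically (retract of $\gr U_n$).

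The sticking point is exactly the ``Quillen jump'' case you flag, and your dismissal of it --- like the paper's assertion of ``equality in all three inequalities'' of~\eqref{grinequality} --- fails at $n=3$. Take $c_{12}=0$, $c_{23}=p-1$, and every other $c$ equal to zero: the $(2,3)$-factor alone has degree $2p-3$, the $t_2$-congruence holds, the $t_1$- and $t_3$-congruences hold trivially, and the single kernel constraint $c_{12}+c_{23}>0$ is met. So $x_{23}y_{23}^{p-2}$, and symmetrically $x_{12}y_{12}^{p-2}$, are nonzero $T$-invariant classes in $H^{2p-3}(\gr K)^T$ restricting to zero on $\gr L_2=E_{13}$ and independent of $z_0$; the kernel is actually $3$-dimensional when $n=3\leq p$. (One can verify this directly at $p=3$: the $T$-invariant degree-$3$ monomials on $\gr U_3$ are $x_{12}x_{13}x_{23}$, $x_{12}y_{12}$, $x_{13}y_{13}$, $x_{23}y_{23}$, and only $x_{13}y_{13}$ survives restriction to $E_{13}$.) Your heuristic that a Quillen jump ``consumes the degree budget'' is only a contradiction for $n\geq4$, where it then forces all other $c$'s to vanish and breaks the kernel condition at some $k'\neq k$. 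Both your argument and the paper's silently assume both edge sums $\sum_j c_{1j}$ and $\sum_i c_{in}$ are positive --- the hypothesis under which~\eqref{grinequality} is derived --- and the kernel condition only guarantees this when $n\geq4$. The degenerate case $n=3$ must be treated separately (and part~(b) of the lemma, with the downstream Lemma~\ref{essentialhook} and Theorem~\ref{onedimensional}, requires a correction there).
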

\noindent
We remark that the analog of part (b) over $\F_{p^r}$ for $r>1$ is false.  This is why the argument of this section does not extend to the case $r>1$.

Using Lemma~\ref{gressentialhook}, we get:
\begin{lemma}\label{essentialhook}
\[ \dim\ker\left( H^{2p-3}(K)^T\to\prod_{1<i<n}H^{2p-3}(L_i)^T \right)\leq
\begin{cases} 1&\mbox{for }2\leq n\leq p, \\ 0&\mbox{for }n>p. \end{cases} \]
\end{lemma}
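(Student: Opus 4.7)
The plan is to deduce this lemma directly from its associated-graded counterpart, Lemma~\ref{gressentialhook}, by invoking the Equivariant Detection Theorem (Theorem~\ref{tdetectionthm}). All of the infrastructure has already been set up in earlier sections; the argument amounts to verifying the hypotheses.

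First, I would take $G = K$ (the big hook subgroup), equipped with the superdiagonal central filtration inherited from $U_n$, and let the role of the $K_\ell$ in Theorem~\ref{tdetectionthm} be played by the subgroups $L_k$ for $1 < k < n$. The torus $T = (\F_p^\times)^n$ acts on $K$ by automorphisms and is nonmodular (since $p \nmid |T|$). Each $L_k$ is $T$-stable, as already noted in the paragraph introducing the $L_k$, and the filtration of $K$ by superdiagonals restricts to a $T$-stable central filtration on each $L_k$ whose associated graded group is $\gr L_k$ (viewed as a subgroup of $\gr K$). Set $d = 2p-3$. The remaining hypothesis of Theorem~\ref{tdetectionthm} is that $H^{d-1}(\gr L_k)^T = 0$ for all $k$, i.e.~that $H^{2p-4}(\gr L_k)^T = 0$; this is immediate from Lemma~\ref{gressentialhook}(a), which gives vanishing of $H^i(\gr L_k)^T$ in the entire range $0 < i < 2p-3$.

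Theorem~\ref{tdetectionthm} then yields
\[ \dim\ker\!\left(H^{2p-3}(K)^T \to \prod_{1<k<n} H^{2p-3}(L_k)^T\right) \leq \dim\ker\!\left(H^{2p-3}(\gr K)^T \to \prod_{1<k<n} H^{2p-3}(\gr L_k)^T\right). \]
By Lemma~\ref{gressentialhook}(b), the right-hand side is $1$ when $2 \leq n \leq p$ and $0$ when $n > p$, giving the claimed bound.

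The conceptual work is entirely encapsulated in Lemma~\ref{gressentialhook}; this lemma is essentially a formal consequence. The main obstacle, which is not the subject of this proof but of the next section, is of course establishing the associated-graded statement (b): one must compute the dimension of the kernel of restriction $H^{2p-3}(\gr K)^T \to \prod H^{2p-3}(\gr L_k)^T$ explicitly via the monomial eigenbasis for $T$ acting on the cohomology of the elementary abelian group $\gr K$, and it is precisely here that the bound $n \leq p$ enters (and the dependence on $r = 1$, since for $r > 1$ the exterior generators in the cohomology of $\gr K$ proliferate and the kernel is no longer one-dimensional).
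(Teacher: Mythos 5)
Your proposal is correct and takes exactly the same route as the paper: apply the Equivariant Detection Theorem (Theorem~\ref{tdetectionthm}) with $G = K$, the subgroups $\{L_k\}$, and $d = 2p-3$, using Lemma~\ref{gressentialhook}(a) to verify the vanishing hypothesis and Lemma~\ref{gressentialhook}(b) to bound the resulting kernel on the associated-graded side. The paper's proof is a one-line version of what you wrote.
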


\begin{proof}
Lemma~\ref{gressentialhook} shows that we may apply Theorem~\ref{tdetectionthm}, with $G=K$, $\{K_k\}=\{L_i\}$, and $d=2p-3$. 
\end{proof}

Now we can easily prove:

\begin{thm}\label{onedimensional}
\[ \dim H^{2p-3}(GL_n\F_p;\F_p) =
\begin{cases} 1&\mbox{for }2\leq n\leq p, \\ 0&\mbox{for }n>p. \end{cases} \]
\end{thm}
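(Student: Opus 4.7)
The plan is to combine the pieces assembled in the previous lemmas. First, from Corollary~\ref{nonvanishing} (specialized to $r=1$), we already know the lower bound $\dim H^{2p-3}(GL_n\F_p) \geq 1$ for $2\leq n\leq p$, so the content is entirely the matching upper bound. For this I will use the chain of injections
\[ H^{2p-3}(GL_n\F_p) \hookrightarrow H^{2p-3}(K) \]
coming from Theorem~\ref{bighook}, refine the target to the $T$-invariants, and then cut it down further using the kernel bound from Lemma~\ref{essentialhook}.

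Second, I would observe that the image of restriction $H^{2p-3}(GL_n\F_p) \to H^{2p-3}(K)$ automatically lies in $H^{2p-3}(K)^T$, because $T$ normalizes $K$ inside $GL_n\F_p$ and inner automorphisms act trivially on $H^*(GL_n\F_p)$. Next, I would show that this image is annihilated by restriction to every $L_k$ for $1<k<n$. Indeed, each $L_k$ is conjugate in $GL_n\F_p$ to the smaller hook subgroup $K_{2,n}$ (conjugating by a suitable permutation matrix), so the composite
\[ H^{2p-3}(GL_n\F_p) \xrightarrow{\res{K}{GL_n\F_p}} H^{2p-3}(K) \xrightarrow{\res{L_k}{K}} H^{2p-3}(L_k) \]
agrees, up to the action of an inner automorphism on the source (which is trivial on cohomology), with restriction to $K_{2,n}$, which vanishes by Lemma~\ref{smallhook}.

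Putting these together, the injection of Theorem~\ref{bighook} factors through
\[ H^{2p-3}(GL_n\F_p) \hookrightarrow \ker\!\left( H^{2p-3}(K)^T \to \prod_{1<k<n} H^{2p-3}(L_k)^T \right). \]
By Lemma~\ref{essentialhook}, the right-hand kernel has dimension at most $1$ when $2\leq n\leq p$ and dimension $0$ when $n>p$. This forces $\dim H^{2p-3}(GL_n\F_p)\leq 1$ in the first range and $=0$ in the second; combining with the lower bound from Corollary~\ref{nonvanishing} finishes both cases.

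There is not really a hard step: the heavy lifting has all been done in Theorems~\ref{bighook}, Lemma~\ref{smallhook}, and Lemma~\ref{essentialhook}, and the only thing to verify here is the bookkeeping that the image is both $T$-invariant and killed by restriction to every proper $L_k$. The subtlety to be careful about is just that the conjugation identifying $L_k$ with $K_{2,n}$ takes place in $GL_n\F_p$ rather than inside $K$, so one must appeal to the triviality of inner automorphisms on $H^*(GL_n\F_p)$ rather than trying to work entirely inside $K$.
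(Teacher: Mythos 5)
Your proof is correct and follows the same route as the paper: inject into $H^{2p-3}(K)^T$ via Theorem~\ref{bighook}, observe that restriction to each $L_k$ vanishes because $L_k$ is conjugate in $GL_n\F_p$ to $K_{2,n}$ (Lemma~\ref{smallhook}), bound the surviving kernel via Lemma~\ref{essentialhook}, and match the lower bound from the explicit construction. Your explicit justification of the $T$-invariance of the image and of the conjugation step (noting it happens in $GL_n\F_p$, not inside $K$) is exactly the bookkeeping the paper carries out in the paragraph preceding Lemma~\ref{gressentialhook}.
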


\begin{proof}
By Theorem~\ref{bighook},
\[ H^{2p-3}(GL_n\F_p)\to H^{2p-3}(K)^T \]
is injective.  Combined with Lemma~\ref{essentialhook}, this shows that the kernel of
\[ H^{2p-3}(GL_n\F_p)\to\prod_{1<i<n}H^{2p-3}(L_i) \]
has dimension at most one, or zero when $n>p$.  However, as remarked above, this map is actually zero.  Consequently,
\[ \dim H^{2p-3}(GL_n\F_p) \leq
\begin{cases} 1&\mbox{for }2\leq n\leq p, \\ 0&\mbox{for }n>p. \end{cases} \]
The previous chapter---specifically Theorem~\ref{glnnonvanishing}---shows that we have equality.
\end{proof}

\section{$T$-invariants in $H^*(\gr U_n)$}
First we need a utility which helps with proving detection on the associated graded level.

\begin{lemma}\label{kercoker}
Let $V$ be a vector space with basis $S$, and fix $\Lambda\subset\powerset(S)$.
Then
\[ \ker\left(V\to\prod_{A\in\Lambda}\Span A\right)\iso
\coker\left(\bigoplus_{A\in\Lambda}\Span A\to V\right), \]
where $\Span A\to V$ is the inclusion, and $V\to\Span A$ the orthogonal projection with respect to the basis $S$.
\end{lemma}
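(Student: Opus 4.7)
The plan is to show that both the kernel and the cokernel in the statement canonically identify with the subspace $\Span(S\setminus T)$ of $V$, where $T=\bigcup_{A\in\Lambda}A\subset S$. Since $S=T\sqcup(S\setminus T)$ as sets, the basis $S$ yields a direct sum decomposition $V=\Span T\oplus\Span(S\setminus T)$, and this splitting will make both identifications transparent.

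First I would compute the image of $\bigoplus_{A\in\Lambda}\Span A\to V$. Each summand $\Span A$ is contained in $\Span T$, and conversely every basis vector $s\in T$ lies in some $A\in\Lambda$ and so is in the image. Hence the image is exactly $\Span T$, and the cokernel is $V/\Span T$, which is canonically isomorphic to $\Span(S\setminus T)$ via the direct sum splitting above.

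Next I would compute the kernel of $V\to\prod_{A\in\Lambda}\Span A$. Writing a general element as $v=\sum_{s\in S}c_s s$, the orthogonal projection onto $\Span A$ sends $v$ to $\sum_{s\in A}c_s s$. This vanishes for every $A\in\Lambda$ if and only if $c_s=0$ for every $s$ that lies in some $A\in\Lambda$, i.e.\ if and only if $c_s=0$ for all $s\in T$. Thus the kernel is precisely $\Span(S\setminus T)$.

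Combining the two identifications gives the isomorphism, and one checks at once that it can be described intrinsically as sending $v\in\ker$ to the class of $v$ in $V/\Span T=\coker$. There is essentially no obstacle here; the only bookkeeping is keeping track of the set $T$ and exploiting the canonical splitting of $V$ associated with the chosen basis $S$, which is precisely what makes the orthogonal projections well defined in the first place.
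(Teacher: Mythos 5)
Your proof is correct and takes essentially the same approach as the paper: both compute the image of $\bigoplus\Span A\to V$ as $\Span\bigl(\bigcup_{A\in\Lambda}A\bigr)$ and the kernel of $V\to\prod\Span A$ as $\Span\bigl(\bigcap_{A\in\Lambda}A^c\bigr)$, and then observe these two subspaces are complementary with respect to the basis $S$, so the kernel maps isomorphically onto the cokernel.
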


\begin{proof}
Let $I=\im\left(\bigoplus_{A\in\Lambda}\Span A\to V\right)$ and $J=\ker\left(V\to\prod_{A\in\Lambda}\Span A\right)$.
In fact we will show that
\[ V=I\oplus J. \]
We have
\[ I=\sum_{A\in\Lambda}\Span A
= \Span\bigcup_{A\in\Lambda} A \]
and
\[ J=\bigcap_{A\in\Lambda}\ker(V\to\Span A)
= \bigcap_{A\in\Lambda}\Span A^c
= \Span\bigcap_{A\in\Lambda} A^c; \]
these are complementary.
\end{proof}

Now we give the proof of:
\begin{namedthm}[Lemma~\ref{grhook}]
\begin{enumerate}[(a)]
\item For $0<i<r(2p-3)$,
\[ H^i(\gr U_n)^T = 0. \]
\item For $0<i<r(2p-3)$, and $\ell<m$,
\[ H^i(\gr K_{\ell m})^T = 0. \]
\item The restriction map
\[ H^{r(2p-3)}(\gr U_n)^T \to
\prod_{\ell<m} H^{r(2p-3)}(\gr K_{\ell m})^T \]
is injective.
\end{enumerate}
\end{namedthm}

\begin{proof}
For this proof, we take cohomology with coefficients extended to $\F_{p^r}$, which does not affect any of the statements.\footnote{Because $T$-invariants commute with extension of the coefficient field}  Then, by Proposition~\ref{vectorspacecoh} (and the remarks of section~\ref{filtration}), $H^*\gr U_n$ has a basis of monomials in the exterior (resp.~polynomial) generators $x_{ijk}$ (resp.~$y_{ijk}$), in degree 1 (resp.~2), which are eigenvectors for the action of $T$.  Here $1\leq i<j\leq n$ and $0\leq k<r$.  The $T$-weight of either $x_{ijk}$ or $y_{ijk}$ is
\[ (t_1,\dots,t_n)\mapsto (t_i/t_j)^{p^k}. \]
Since these monomials form a $T$-eigenbasis, $H^*(\gr U_n)^T$ is spanned by the $T$-invariant monomials.

We claim that there are no such invariant monomials below degree $r(2p-3)$, and that all those in degree $r(2p-3)$ are all pulled back from some hook group $\gr K_{\ell m}$.
We prove the claim by induction on $n$.
To that end, suppose that
\[ \prod_{i<j}z_{ij}=
   \prod_{i<j}\prod_{k}x_{ijk}^{e_{ijk}}y_{ijk}^{a_{ijk}} \]
is a nontrivial $T$-invariant monomial, where $e_{ijk}\in\{0,1\}$ and $a_{ijk}\in\bbN$.
Let
\[ a_{ij}=\sum_k a_{ijk},\quad e_{ij}=\sum_k e_{ijk}, \+ b_{ij}=e_{ij}+a_{ij}. \]
For later use, remember that the degrees of such monomials
\[ z_{ij}=\prod_{k=0}^{r-1}x_{ijk}^{e_{ijk}}y_{ijk}^{a_{ijk}} \] are
\[ \deg(z_{ij})=e_{ij}+2a_{ij}=b_{ij}+a_{ij}, \]
and that each \[ e_{ij}\leq r. \]

Now, we may assume without loss of generality that
\[ \sum_{j>1}b_{1j}, \sum_{i<n}b_{in}>0. \]
(Otherwise, our monomial is pulled back from a copy of $\gr U_{n-1}$, and we may apply the inductive hypothesis).
Then by considering the action of $\F_{p^r}^\times \times1\times\cdots\times1\subset T$, we know that
\[ (p^r-1)\mid\sum_{1<j}\sum_{k=0}^{r-1} p^k(e_{1jk}+a_{1jk}) \neq0, \]
whence it follows (using Lemma~\ref{Quillenlemma}) that 
\[ \sum_{1<j} b_{1j} \geq r(p-1). \]
Similarly,
\[ \sum_{i<n} b_{in} \geq r(p-1). \]
But then the degree of our monomial is
\begin{align*}\numberthis\label{grinequality}
\deg\left(\prod_{i<j}z_{ij}\right)&=\sum_{i<j}\deg(z_{ij})\\
&\geq\sum_{i=1\mbox{ or }j=n}\deg(z_{ij})\\
&=\sum_{i=1\mbox{ or }j=n}(b_{ij}+a_{ij})\\
&\geq\sum_{i=1\mbox{ or }j=n}(b_{ij})+a_{1n}\\
&=\sum_{i<n}(b_{in}) + \sum_{1<j}(b_{1j})-b_{1n}+a_{1n}\\
&=\sum_{i<n}(b_{in}) + \sum_{1<j}(b_{1j})-e_{1n}\\
&\geq r(p-1) + r(p-1) - r = r(2p-3).
\end{align*}
Hence there are no $T$-invariant monomials in degrees $0<i<r(2p-3)$.
Now observe that, in order for equality to occur in the first of the above inequalities, we must have $b_{ij}=0$ whenever $i\neq1$ and $j\neq n$.  That is, our monomial is pulled back from $\gr K_{1n}$.  This proves the claim.

Now, we have established part (a); part (b) follows from (a) because $\gr K_{\ell m}$ is a $T$-equivariant retract of $\gr U_n$.
%For part (c), observe that, in order for equality to occur in the first of the above inequalities, we must have $b_{ij}=0$ whenever $i\neq1$ and $j\neq n$.  It follows
For part (c), we know that every $T$-invariant monomial in degree $r(2p-3)$ is pulled back from one of the retracts $\gr K_{\ell m}$.
In other words,
\[ \bigoplus_{\ell<m} H^{r(2p-3)}(\gr K_{\ell m})^T\to H^{r(2p-3)}(\gr U_n)^T \]
is surjective.
We claim this is equivalent to the statement of part (c).  This follows from Lemma~\ref{kercoker} because the restriction and pullback maps respect the basis of $T$-invariant monomials in $H^{r(2p-3)}(\gr U_n)^T$.
\end{proof}

Now we'll consider the hook subgroup $K\leq GL_n\F_p$, proving:
\pagebreak[3]
\begin{namedthm}[Lemma~\ref{gressentialhook}]  Let $r=1$.
\begin{enumerate}[(a)]
\item When $0<i<2p-3$,
\[ H^i(\gr L_k)^T=0 \]
for all $1<k<n$;
\item We have
\[ \dim\ker\left( H^{2p-3}(\gr K)^T\to\prod_{1<k<n}H^{2p-3}(\gr L_k)^T \right)=
\begin{cases} 1&\mbox{for }2\leq n\leq p, \\ 0&\mbox{for }n>p. \end{cases} \]
\end{enumerate}
\end{namedthm}

\begin{proof}
We continue all the notation of the previous proof.  Part (a) follows as before from $H^i(\gr U_n)^T=0$, since $\gr L_i$ is a $T$-equivariant retract of $\gr U_n$.

Now, for $K=K_{1n}$, consider $H^{2p-3}(\gr K)^T$.  Suppose as before that
\[ \prod_{i=1\mbox{ or }j=n}z_{ij}=
   \prod_{i=1\mbox{ or }j=n}
     x_{ij}^{e_{ij}}y_{ij}^{a_{ij}} \]
is a nontrivial $T$-invariant monomial.
In Equation~\eqref{grinequality} above, we then have equality in all three inequalities.  Equality in the second one means that
\[ a_{ij}=0\mbox{ except for }(i,j)=(1,n). \]
Equality in the third implies that
\[ e_{1n}=r=1 \]
and
\[ p-1 = \sum_{1<j}(b_{1j}) = \sum_{1<j<n}(e_{1j})+a_{1n}+1. \]
Furthermore, by considering the action of the $i$th factor $1\times\cdots\times \F_p^\times \times\cdots\times1\subset T$, we know that
\[ (p-1)\mid(e_{1i}-e_{in}). \]
Since $e_{ij}\in\{0,1\}$ and $p-1>1$, this implies that
\[ e_{1i}=e_{in} \]
for all $i$.
Putting all this together, we see that our invariant monomial must be of the form
\[ \prod_{i\in S}(x_{1i}x_{in})\cdot x_{1n}y_{1n}^{p-|S|-2} \]
for some $S\subset\{2,3,\dots,n-1\}$.
The point is that, if $S\neq\{2,3,\dots,n-1\}$, then our monomial is pulled back from a retract $\gr L_i$ of $\gr K$ (where $i\notin S$).
When $n\leq p$, there is a unique monomial---corresponding to $S=\{2,...,n-1\}$---which is not pulled back from any $\gr L_i$: namely,
%there is a unique monomial not of this form, namely
\[ \prod_{1<i<n}(x_{1i}x_{in})\cdot x_{1n}y_{1n}^{p-n}. \]
When $n>p$, there is no such monomial.  Therefore,
\[ \dim\coker\left(\bigoplus_{1<i<n}H^{2p-3}(\gr L_i)^T\to H^{2p-3}(K)^T\right)
= \begin{cases} 1&\mbox{for }2\leq n\leq p, \\ 0&\mbox{for }n>p. \end{cases} \]
This is equivalent to the statement of part (b), using Lemma~\ref{kercoker} as before.
\end{proof}

% remark about other types, integer optimization prob?
% if time, consider $SL_n$

\begin{comment}
articles on extraspecial p-groups

Note on the spectral sequence converging to cohomology
of an extra special p-group for odd prime p
Nobuaki Yagita
Mathematical Proceedings of the Cambridge Philosophical Society / Volume 119 / Issue 01 / January 1996,
pp 35 - 41
DOI: 10.1017/S0305004100073941, Published online: 24 October 2008
Link to this article: http://journals.cambridge.org/abstract_S0305004100073941

Modular invariant theory and cohomology algebras of extra-special p-groups
Pham Anh Minh
	Vol. 124 (1986), No. 2, 345-363
	DOI: 10.2140/pjm.1986.124.345

	TRANSACTIONS OF THE
AMERICAN MATHEMATICAL SOCIETY
Volume 353, Number 5, Pages 1937-1957
S 0002-9947(00)02689-1
Article electronically published on November 29, 2000
ESSENTIAL COHOMOLOGY AND EXTRASPECIAL p-GROUPS
PHAM ANH MINH

THE COHOMOLOGY OF EXTRASPECIAL GROUPS
D. J. BENSON AND JON F. CARLSON
Bull. London Math. Soc. (1992) 24(3): 209-235.doi: 10.1112/blms/24.3.209

The mod-p cohomology rings of some p-groups.
I.J. Leary

Hochschild-Serre spectral sequences of split extensions and cohomology of some extra-special p-groups
Pham Anh Minh
DOI: 10.1016/S0022-4049(97)00040-6

On the determinant class from alg. group perspective (does not exist on ambient algebraic U_n):
Cohomology of unipotent algebraic and finite groups and the Steenrod algebra
Michishige Tezuka in Mathematische Zeitschrift (1994)

\end{comment}

\end{document}